\tikzset{shorten <>/.style={shorten >=#1,shorten <=#1}}
\DeclareMathAlphabet\mathbfcal{OMS}{cmsy}{b}{n}
\let\expandafter\oldproof\csname\string\proof\endcsname
\let\oldendproof\endproof
\renewenvironment{proof}[1][\proofname]{%
  \oldproof[\bfseries #1]%
}{\oldendproof}
\definecolor{darkgreen}{rgb}{0,0.50,0} 
\definecolor{darkred}{rgb}{0.75,0,0}
\definecolor{darkblue}{rgb}{0,0,0.6} 
\definecolor{lightblue}{rgb}{.6,.6,0.9} 
\renewcommand*{\backref}[1]{}
\renewcommand*{\backrefalt}[4]{({%
    \ifcase #1 Not cited.%
          \or On p.~#2%
          \else On pp.~#2%
    \fi%
    })}
\def\makeautorefname#1#2{\expandafter\def\csname#1autorefname\endcsname{#2}}
\newtheorem{thm}{Theorem}[section]
\newtheorem{introthm}{Theorem}
\newtheorem{prop}{Proposition}[section]
\newtheorem{lem}{Lemma}[section]
\theoremstyle{definition}
\newtheorem{exmp}{Example}[section]
\newtheorem{rem}{Remark}[section]
\let\c@cor=\c@thm
\let\c@prop=\c@thm
\let\c@lem=\c@thm
\let\c@conj=\c@thm
\let\c@defn=\c@thm
\let\c@claim=\c@thm
\let\c@quest=\c@thm
\let\c@df=\c@thm
\let\c@exmp=\c@thm
\let\c@rem=\c@thm
\let\c@sch=\c@thm
\let\c@con=\c@thm
\let\c@equation\c@thm
\let\c@introthm=\c@thm
\numberwithin{equation}{section}
\newcommand{\R}{\mathbb R}
\newcommand{\id}{\textup{id}}
\newcommand{\Diff}{\textup{Diff}\,}
\newcommand{\SDiff}{\textup{SDiff}^+}
\newcommand{\Aut}{\textup{Aut}}
\newcommand{\TP}{P}
\newcommand{\TJ}{\mathrm{J}}
\newcommand{\po}{\ar@{}[dr]|{\text{\pigpenfont R}}}
\newcommand{\pb}{\ar@{}[dr]|{\text{\pigpenfont J}}}
\newcommand{\HH}{\mathcal{{H}}}
\newcommand{\dualC}{{\Upsilon}}
\newcommand{\auth}{\mathrm{Aut}(\HH)}
\newcommand{\autxi}{\mathrm{Aut}(\xi)}
\newcommand{\dast}{{\displaystyle{\ast}}}
\newlength{\storeparskip}
\title[Strict contactomorphisms]{DEFORMATION RETRACTION of the GROUP of STRICT CONTACTOMORPHISMS
of the THREE-SPHERE to the UNITARY GROUP}
\author[DeTurck]{Dennis DeTurck}
\address{Department of Mathematics, The University of Pennsylvania}
\email{deturck@math.upenn.edu}
\author[Gluck]{Herman Gluck}
\address{Department of Mathematics, The University of Pennsylvania}
\email{gluck@math.upenn.edu}
\author[Lichtenfelz]{Leandro Lichtenfelz }
\address{Department of Mathematics, Wake Forest University}
\email{llichte2@gmail.com}
\author[Merling]{\\ Mona Merling}
\address{Department of Mathematics, The University of Pennsylvania}
\email{mmerling@math.upenn.edu}
\author[Wang]{Yi Wang}
\address{Department of Mathematics, The University of Pennsylvania}
\email{yiwang4@math.upenn.edu}
\author[Yang]{Jingye Yang}
\address{Department of Mathematics, The University of Pennsylvania}
\email{jingyey@math.upenn.edu}
\let\oldtocsection=\tocsection
\let\oldtocsubsection=\tocsubsection
\let\oldtocsubsubsection=\tocsubsubsection
\renewcommand{\tocsection}[2]{\hspace{0em}\oldtocsection{#1}{#2}}
\renewcommand{\tocsubsection}[2]{\hspace{2em}\oldtocsubsection{#1}{#2}}
\renewcommand{\tocsubsubsection}[2]{\hspace{2em}\oldtocsubsubsection{#1}{#2}}
\begin{document}

\vspace{-15pt}

\begin{abstract} We prove that the group of strict contactomorphisms of the standard tight
contact structure on the three-sphere deformation retracts to its unitary subgroup $U(2)$.
\end{abstract}
\maketitle

\vspace{-15pt}

The group $\Aut_1(\xi)$ of strict contactomorphisms of the standard tight contact structure $\xi$ on the three-sphere is known to be the total space of a fiber bundle $S^1\hookrightarrow \Aut_1(\xi) \to \SDiff(S^2)$ over the group of orientation-preserving, area-preserving diffeomorphisms of the two-sphere $S^2$,
where the projection $P$ is the map given by descending under the Hopf map $S^3 \to S^2$, and the
fiber $S^1$ is the circle subgroup of diffeomorphisms of $S^3$ which rotate all Hopf circles within
themselves by the same amount.

\begin{introthm}\label{mainthm}
In the category of Fr{\'e}chet Lie groups and $C^\infty$ maps, the fiber bundle 
$$S^1\hookrightarrow \Aut_1(\xi) \to \SDiff(S^2)$$
deformation retracts to its finite-dimensional subbundle
$$S^1\hookrightarrow U(2) \to SO(3),$$
where the $S^1$ fibers move rigidly during the deformation.
\end{introthm}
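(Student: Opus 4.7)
The natural strategy is to build the deformation retraction in two phases: first on the base $\SDiff(S^2)$, then lift to $\Aut_1(\xi)$ in an $S^1$-equivariant way so that the Hopf fibers move rigidly.

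For phase one, the plan is to construct a smooth deformation retraction of $\SDiff(S^2)$ onto $SO(3)$. The classical building blocks are Smale's theorem (that the orientation-preserving diffeomorphism group of $S^2$ deformation retracts onto $SO(3)$) combined with a Moser-type argument to restore area-preservation throughout the deformation. Concretely, one would take a path $\phi_s$ from $\phi\in\SDiff(S^2)$ to some rotation $A\in SO(3)$ via Smale, pull back the standard area form $\omega$ along $\phi_s$ to obtain a smooth $1$-parameter family of area forms of constant total mass, and apply Moser's path method to produce an area-preserving isotopy $h_s$ from $\phi$ to an element of $SO(3)$, smooth in both $\phi$ and $s$.

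For phase two, observe that $P\colon \Aut_1(\xi)\to\SDiff(S^2)$ is a principal $S^1$-bundle, with the $S^1$ acting by left multiplication through the central subgroup rotating all Hopf fibers by the same angle. Equip this bundle with a smooth $S^1$-connection, constructible naturally from the canonical contact form $\alpha$ on $S^3$. Parallel transport along the path $s\mapsto h_s(P(\tilde\phi))$ then lifts $h_s$ to a smooth map $\tilde h_s\colon \Aut_1(\xi)\to\Aut_1(\xi)$ satisfying $\tilde h_0=\id$ and $P\circ\tilde h_s=h_s\circ P$. Since parallel transport in a principal bundle is automatically equivariant under the structure group action, $\tilde h_s$ sends each $S^1$-fiber onto another by multiplication by a single group element, which is exactly the rigid-motion condition. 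At $s=1$, the endpoint lies in $P^{-1}(SO(3))$; since any two lifts of a given element of $\SDiff(S^2)$ differ by the $S^1$-action (which sits inside $U(2)$), we get $P^{-1}(SO(3))=U(2)$, yielding the deformation retraction onto the subbundle $S^1\hookrightarrow U(2)\to SO(3)$.

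The main obstacle I foresee is phase one: Smale's construction is geometrically intricate, and parameterizing it smoothly in $\phi$ while interleaving Moser's trick requires genuine care in the Fr\'echet category, in particular to ensure that $h_s$ truly lands in $SO(3)$ at $s=1$ rather than merely above $SO(3)$ up to an area-preserving correction. Phase two should follow from relatively standard principal-bundle technology, but constructing the connection explicitly from $\alpha$ and verifying smoothness of parallel transport in the Fr\'echet sense also demand some attention.
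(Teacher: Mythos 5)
Your phase two is, in substance, the route the paper itself takes: the paper's ``horizontal'' distribution on $\Aut_1(\xi)$ is the $L^2$-orthogonal complement of the fiber direction, whose connection form is $\dot F\mapsto\langle \dot F\circ F^{-1},A\rangle_{L^2}$, i.e.\ an averaged version of $\alpha$, and rigidity of the fibers comes from invariance of this structure under the central $S^1$. Where you genuinely diverge is phase one, and there lies a real gap. As you set it up, Smale plus Moser does not end in $SO(3)$: if $\phi_s$ is Smale's path from $\phi$ to $A$ and $\psi_s$ the Moser correction with $\psi_s^*(\phi_s^*\omega)=\omega$, the corrected path ends at $A\circ\psi_1$ with $\psi_1$ area-preserving but in general not the identity. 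You flag this obstacle yourself but do not resolve it (a fix is to retract the whole Smale path by a Moser retraction of the orientation-preserving diffeomorphism group onto $\SDiff(S^2)$ built from the convex interpolation $(1-s)\phi^*\omega+s\,\omega$, which fixes $\SDiff(S^2)$ pointwise, so the corrected path ends at the Smale endpoint in $SO(3)$); even then you would need joint smoothness of Smale's retraction in the Fr\'echet sense, which is nontrivial and not something you can quote. The paper sidesteps all of this by taking Mu-Tao Wang's mean-curvature-flow deformation retraction of $\SDiff(S^2)$ onto $SO(3)$ as a black box, so its base retraction requires no further justification.

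The second gap is your assessment of phase two as ``relatively standard principal-bundle technology.'' In the Fr\'echet category there is no general existence, uniqueness, or smooth-dependence theorem for ODEs, so parallel transport is not automatic. Existence and uniqueness of a single horizontal lift is handled in the paper by pulling the bundle back over $[0,1]$, where the total space is a finite-dimensional annulus, and solving the ODE there; and the harder point, smooth dependence of the lifted paths on the initial point $\tilde\phi$ --- exactly what is needed for $\tilde h_s$ to be a smooth map of $\Aut_1(\xi)$ and hence for the lifted homotopy to live in the category of the theorem --- is the main technical content of the paper's proof. It is established by comparing the horizontal lift with smooth local lifts coming from local trivializations of the bundle and showing that the correcting fiber angle $\theta(f,t)$ satisfies an explicit integral formula, hence depends smoothly on $(f,t)$. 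Your proposal asserts this smoothness rather than proving it, so as written the argument is incomplete precisely at the two points where the actual work lies.
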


It was already known that this bundle inclusion is a homotopy equivalence, and we improve on that by showing how to lift Mu-Tao Wang's deformation retraction of the group $\SDiff(S^2)$ onto its subgroup $SO(3)$ to one of $\Aut_1(\xi)$  onto its subgroup $U(2)$.

Here are the basic definitions.

The \emph{Hopf fibration} $\HH$ of the three-sphere is a fiber bundle $S^1\hookrightarrow  S^3\xrightarrow{p} S^2$ whose fibers are the oriented unit circles on the complex lines through the origin in $\mathbb{C}^2$. The \emph{Hopf vector field} $V_{\HH}$ on $S^3$ is the unit vector field tangent to these oriented great circles. The group $\auth$  of \emph{automorphisms of} $\HH$ is the subgroup of $\Diff(S^3)$  consisting of diffeomorphisms which permute the oriented great circle fibers of $\HH$, not necessarily rigidly. They are all orientation-preserving.
The subgroup $\Aut_1(\HH)$ of \emph{strict automorphisms of $\HH$} is the subgroup of $\auth$ permuting Hopf fibers rigidly,
$\Aut_1(\HH)=\{F\in\Diff(S^3)\ \mid \  F_\dast V_{\HH} = V_{\HH} \}.$

The \emph{standard tight contact structure} $\xi$ on $S^3$ is the field of tangent two-planes which are everywhere orthogonal to the great circle fibers of the Hopf fibration. 
The \emph{standard contact one-form} $\alpha$ is the inner product with the Hopf vector field, so that $\alpha(W)=\langle V_{\HH}, W\rangle$, and therefore $\xi=\mathrm{ker}\ \alpha$.
The group $\Aut(\xi)$ is the subgroup of $\mathrm{Diff}(S^3)$  consisting of  diffeomorphisms $h$ whose differential $h_{\dast}$ permutes the tangent $2$-planes of $\xi$, meaning that $h_{\dast}$ maps the tangent 2-plane of $\xi$ at $x$ to the tangent 2-plane of $\xi$ at $h(x)$, for all $x\in S^3$. We write $h_{\dast}(\xi)=\xi$ and call $h$ a \emph{a contactomorphism}. We have that $h_{\dast}(\alpha)=\lambda \alpha$ for some smooth (always meaning $C^\infty$ here) real-valued nowhere zero function $\lambda$ on $S^3$. If $h_\dast(\alpha)=\alpha$ on the nose, then we call $h$ a \emph{strict contactomorphism} or \emph{quantomorphism}, and denote the group of these by  $\Aut_1(\xi)$.
 
 We will show in \autoref{intersection} that  the group $\Aut_1(\xi)$ consists precisely of those diffeomorphisms of $S^3$ which simultaneously preserve the Hopf fibration $\HH$ and the standard tight contact structure $\xi$, that is, $$\Aut_1(\xi)=\auth \cap \Aut(\xi).$$
This, in turn, will help us in the proof of the main theorem.

	\begin{figure}[h!]
	\begin{center}
		\includegraphics[scale=0.36]{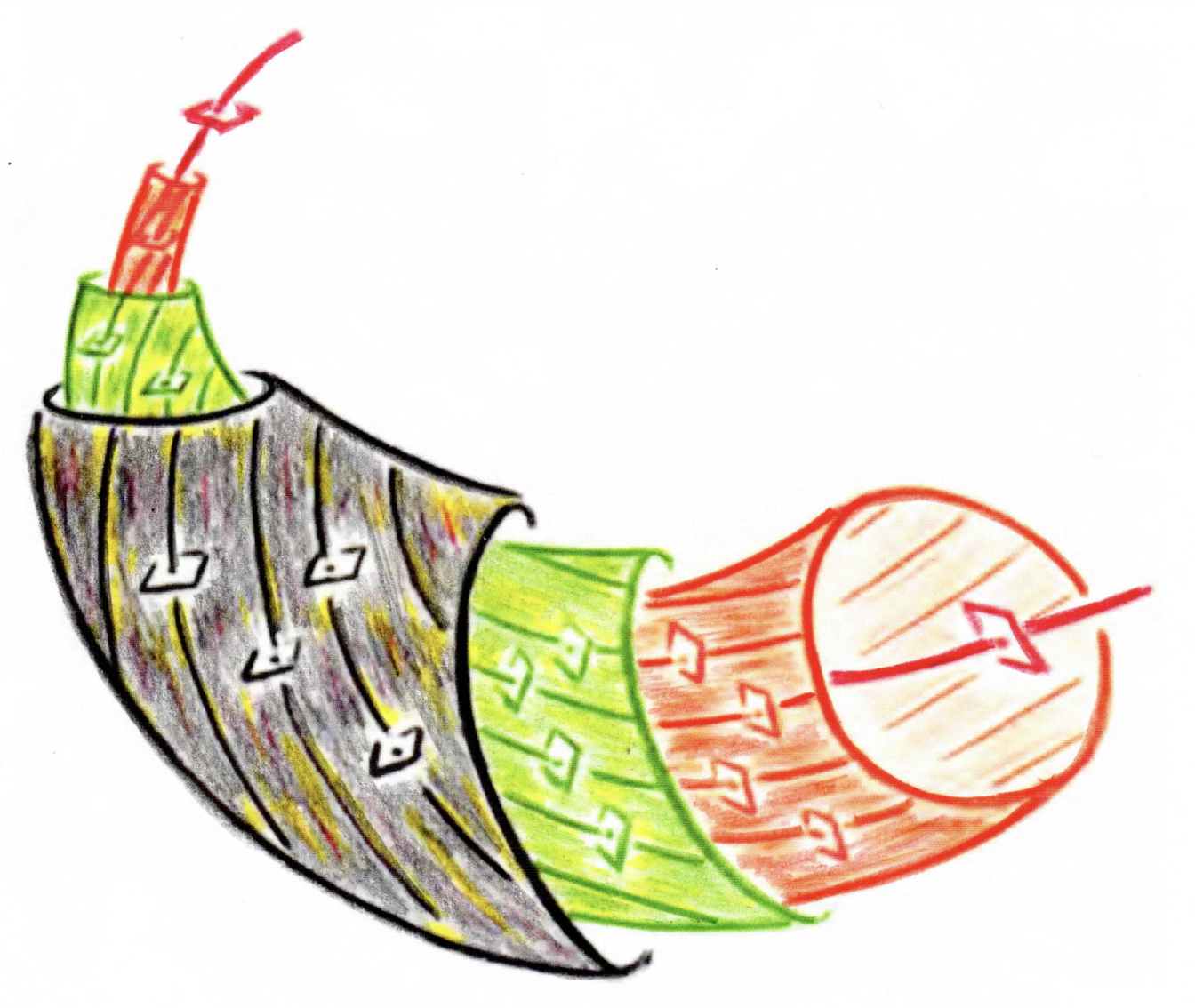}\caption{The standard tight contact structure on the three-sphere
is the field of tangent two-planes orthogonal to the great circle Hopf fibers.}\label{hopfpic}
	\end{center}
	\end{figure}

\begingroup%
\setlength{\parskip}{\storeparskip}
\setcounter{tocdepth}{2}
\tableofcontents
\endgroup%


\section{Introduction}\label{intro}

We give some historical context. The study of the homotopy types of the groups $\Diff(M)$ of diffeomorphisms of smooth manifolds $M$ and their subgroups has a rich history. By  a result of Smale, the diffeomorphism group $\Diff(S^2)$ deformation retracts to the orthogonal group $O(3)$, and by the celebrated Smale Conjecture proved by Hatcher in \cite{hatcher_smale}, the diffeomorphism group $\Diff(S^3)$ deformation retracts to the orthogonal group $O(4)$. It is natural to consider diffeomorphisms of $S^3$ which preserve extra structure there and the interplay and homotopy types of resulting moduli spaces. In this paper we are focused on the subgroup $\Aut_1(\xi)$ of  $\Diff(S^3)$ which consists of strict contactomorphisms.

The following exact sequences and bundles have been studied in the literature, and we have also presented detailed self-contained proofs in Part 2.

\noindent $(1)$ \emph{The exact sequence of Fréchet Lie algebras, equivalently, tangent spaces at the identity} $$0\to  T_\id S^1 \xrightarrow{} T_\id \Aut_1(\xi) \xrightarrow{} T_\id \SDiff(S^2)\to 0,$$

\noindent $(2)$ \emph{The exact sequence of Fréchet Lie groups}$$\{1\}\to   S^1 \xrightarrow{}  \Aut_1(\xi) \xrightarrow{}  \SDiff(S^2)\to \{1\},$$

\noindent $(3)$ \emph{The Fréchet fiber bundle}  $$S^1 \hookrightarrow  \Aut_1(\xi) \xrightarrow{}  \SDiff(S^2),$$

\noindent $(4)$ \emph{The finite-dimensional subsequence of (2) and finite-dimensional subbundle of (3)} 
$$S^1\hookrightarrow U(2) \xrightarrow{} SO(3).$$

Leslie \cite{leslie}  introduced a differential structure on the group of diffeomorphisms of a differentiable
manifold, converting it into a Fr\'echet Lie group. Banyaga \cite{banyaga1}, \cite{banyaga2} presented the above exact sequence (2) of Fr\'echet Lie groups, attributing this to Souriau \cite{souriau}, and noted its finite-dimensional exact subsequence (4).

 The fiber bundle result (3) was proved by Ratiu and Schmid in the Sobolev category \cite{ratiu_schmid}, building on work of Kostant \cite{konstant}, Souriau \cite{souriau}, Ebin and Marsden \cite{ebin_marsen}, Omori \cite{omori}, and Banyaga \cite{banyaga1}, \cite{banyaga2}. They attribute the exact sequences (1) and (2) of Fr\'echet Lie algebras and Lie groups to Kostant \cite{konstant}, and used these to derive the bundle result. 
 
 Vizman \cite{vizman} worked in the $C^\infty$ category, and obtained the exact sequences (1) and (2) above of Fr\'echet Lie algebras and Lie groups, as well as the Fr\'echet fiber bundle (3). Casals and Spacil \cite{casalsspacil} also worked in the $C^\infty$ category, attributed the Fr\'echet fiber bundle (3) above to Vizman, and showed that the inclusion of the finite-dimensional subbundle (4) into this bundle is a homotopy equivalence. Their further conclusions depended on a result of Eliashberg \cite{eliashberg92} which was only stated though not proved by him, but later proved by Eliashberg and Mishachev \cite{eliashbergmishachev}.
 
Mu-Tao Wang \cite{wang, Wang13} showed how to deformation retract the group $\SDiff(S^2)$ of orientation-preserving, area-preserving diffeomorphisms of the two-sphere to its subgroup $SO(3)$ of orthogonal
transformations by applying mean curvature flow in $S^2\times S^2$ simultaneously to the graphs of
all orientation-preserving, area-preserving diffeomorphisms of $S^2$ to itself. Our \autoref{mainthm} will be proved by lifting this to a deformation retraction of $\Aut_1(\xi)$ to $U(2)$. 

\subsection*{Organization of the paper and plan of the proof of \autoref{mainthm}}  \autoref{firstpart} begins with \autoref{prelims} where we  regard $S^3$ as the group of unit quaternions, and quickly   review left-invariant vector fields and differential forms on $S^3$. We also give a very brief overview of Fr\'echet spaces,
manifolds and Lie groups, which provide the setting for this paper.
In \autoref{intersectionsection} we will examine the behavior of diffeomorphisms which lie in the group $\Aut_1(\xi)$ of strict
contactomorphisms, and show that this group is the intersection of the groups $\auth$ and $\autxi$.
After that, here is the plan for proving the main theorem in \autoref{bundlesection}. 

We must show that the fiber bundle $S^1 \hookrightarrow  \Aut_1(\xi) \xrightarrow{}  \SDiff(S^2)$ deformation retracts to its
finite-dimensional subbundle $S^1 \hookrightarrow U(2)\rightarrow SO(3)$.
We will start with Wang's deformation retraction  \cite{wang} of the base space $\SDiff(S^2)$ to $SO(3)$, and show how to lift this to the desired deformation retraction of the total space $\Aut_1(\xi)$ to $U(2)$, in a way that moves fibers to fibers rigidly at all times, while keeping the fibers of the subbundle pointwise fixed.

To begin, we will put the standard $L^2$ Riemannian metric on $\Aut_1(\xi)$ and show that every smooth path $\gamma$ in $\SDiff(S^2)$ can be lifted to a smooth horizontal path $\overline{\gamma}$ in $\Aut_1(\xi)$, meaning one that is everywhere orthogonal to the fiber direction, and is unique once we specify its starting point.

It is natural to aim to lift Mu-Tao Wang's deformation retraction of the base space $\SDiff(S^2)$ to a deformation retraction of the total space $\Aut_1(\xi)$ by simply lifting the path followed by each
point in $\SDiff(S^2)$ to the horizontal path followed by each point in the $S^1$ fiber above it.
The problem is that although we can see that the various lifted paths $\overline{\gamma}$ in $\Aut_1(\xi)$ are smooth in the time $t$ direction, we don't yet know that they are smooth in the transverse direction.

To address this, we will start with Mu-Tao Wang's deformation in $\SDiff(S^2)$ and, using the local
product structure from the fiber bundle, define smooth ``local lifts'' of it to $\Aut_1(\xi)$, ignoring the
fact that they do not fit together coherently to a global lift. Instead, we will show how to smoothly ``adjust'' these smooth local lifts to the desired global ``horizontal-in-time'' lifts there, and so conclude that these horizontal-in-time lifts are themselves smooth.

In \autoref{secondpart},  we begin by describing nearest neighbor maps, horizontal lifts and quantitative holonomy in \autoref{appendix_nn}. In \autoref{appendix1} we compute the tangent spaces at the identity of our various Lie groups. In \autoref{lieexactsec}, \autoref{groupexactsec} and \autoref{bundlesec}, we give independent, self-contained proofs of the exactness of sequences (1) and (2) of Lie algebras and Lie groups described above, and the bundle structure of sequence (3). 
Finally, we give some background on Fréchet manifolds in \autoref{appendix_frechet}.

\subsection*{Acknowledgements} It is a pleasure to acknowledge the contributions to this project arising from conversations with Alexander Kupers and Jim Stasheff. We thank Ziqi Fang for perceptive comments on a draft of this paper. 
Merling acknowledges partial support from NSF DMS grants CAREER 1943925 and FRG 2052988. Wang acknowledges partial support from NSF GRFP 1650114.

\part{\large Deformation retraction of the strict contactomorphism group}\label{firstpart}

\section{Preliminaries}\label{prelims}
\subsection{Fr\'echet spaces and manifolds}
Fr\'echet spaces, manifolds and Lie groups provide the setting for extending the theory of finite-dimensional $C^\infty$ differentiable manifolds and $C^\infty$ maps between them to the infinite-dimensional case. We give a very brief overview here and for more details, we refer the reader to the two papers of Eells \cite{Eells58, Eells} and that of Leslie \cite{leslie} for early developments and to Hamilton's paper \cite{Ham} and the book \cite{KM} of Kriegl and Michor for a good overview with details.    We highlight in \autoref{appendix_frechet} some results which we use in the present paper.

A \emph{Fr\'echet space} $V$ is a complete metrizable vector space whose topology is induced by a
countable family of semi-norms, where a semi-norm $\rho$ behaves like a norm except that $\rho(v)=0$ does not imply that $v=0$. A simple example is the space $C^\infty[0,1]$ of $C^\infty$ maps from the interval $[0,1]$ to the real numbers. Another example is the space $\mathrm{Vect}(M)$ of $C^\infty$ vector fields on a compact $C^\infty$ manifold $M$, and yet another example is the space $S\mathrm{Vect}(M)$ of $C^\infty$ divergence-free vector fields on $M$ with respect to a Riemannian metric on $M$. The semi-norms are the usual $C^k$ norms for $k=0,1,2, \dots$ on the $C^k$ versions of these spaces. 

As in the finite-dimensional case, \emph{$C^\infty$ maps} between open subsets of Fr\'echet spaces are defined in terms of the convergence of various difference quotients. A \emph{Fr\'echet manifold} modeled on a Fr\'echet space $V$ is a Hausdorff topological space with an atlas of charts which are homeomorphisms from open sets in $V$ into $M$ such that the change of coordinate maps are $C^\infty$ maps. Basic examples are the space $\Diff(M)$ of diffeomorphisms of a compact finite-dimensional $C^\infty$ manifold $M$, equipped with the $C^\infty$ topology, which is modeled on the Fr\'echet space $\mathrm{Vect}(M)$, and the space $\SDiff(M)$ of orientation-preserving, volume-preserving diffeomorphisms of a compact Riemannian manifold, modeled on the Fr\'echet space $S\mathrm{Vect}(M)$. Both are Fr\'echet Lie groups, meaning that multiplication via composition as well as inversion are smooth maps.

In this paper, we focus on the Fr\'echet Lie group $\Aut_1(\xi)$ of strict contactomorphisms (or
quantomorphisms) of the standard tight contact structure $\xi$ on the 3-sphere $S^3$. Since $\Aut_1(\xi)$ is the total space of an $S^1$-bundle over $\SDiff(S^2)$, it is modeled, just like $\SDiff(S^2) \times S^1$, on the Fr\'echet space $S\mathrm{Vect}(S^2) \times \R$. This in turn is isomorphic to the Fr\'echet space $C^\infty(S^2)$ of $C^\infty$ real-valued functions on the two-sphere $S^2$. The Fr\'echet Lie groups $\auth$ of automorphisms of the Hopf fibration $\HH$ of $S^3$, and $\Aut(\xi)$ of contactomorphisms of the standard tight contact structure $\xi$ on $S^3$, appear briefly in this paper in the proposition that their intersection is precisely the group $\Aut_1(\xi)$, which in turn helps us to better understand $\Aut_1(\xi)$. We will study $\auth$ in a forthcoming paper.

\subsection{Left-invariant vector fields and differential forms on $S^3$}

We view the 3-sphere $S^3$ as the space of unit quaternions and make the following definitions. Let
\begin{equation}\label{eq_abc}
A(x) = x i, ~~~~\ \ B(x) = x j,~~~~ \ \ C(x) = x k,
\end{equation}
be the standard left invariant vector fields given by right multiplication by $i, j, k$. Any smooth vector field $X$ on $S^3$ can be written in the basis from \autoref{eq_abc} as
\begin{equation}\label{eq_vf_abc}
X = f A + g B + h C
\end{equation}
where $f, g$ and $h$ are smooth real-valued functions on $S^3$.  

The Lie brackets of these vector fields satisfy
\begin{equation}\label{brackets}
[A,\ B] = 2C, ~~~~ \ \ [B,\ C] = 2A, ~~~~ \ \ [C,\ A] = 2B.
\end{equation}
The dual left-invariant one-forms to $A$, $B$ and $C$ on $S^3$ with respect to the standard metric will be denoted by $\alpha, \beta$ and $\dualC$, so that
$$\alpha(A)=1,\ \ \alpha(B)=0,\ \ \alpha(C)=0,$$ and likewise for $\beta$ and $\dualC$. Their exterior derivatives are given by
\begin{equation}\label{dualforms}
d\alpha =-2\beta \wedge\dualC, ~~\ \  d\beta=-2\dualC \wedge \alpha, \ \ d\dualC=-2\alpha \wedge \beta.
\end{equation}

We choose the great circle orbits of the vector field $A$ as the fibers of our Hopf fibration $\HH$,
so that $V_{\HH} = A$. It then follows that $A$ is the Reeb vector field of $\xi$, i.e., $\alpha(A)=1$ and $d\alpha(A,-)=0$.

Viewing $A$, $B$ and $C$ as directional derivative operators,  the differential operators $\mathrm{div}$ and $\mathrm{curl}$, acting on a vector field $X$ as above, are
\begin{equation}\label{div}
\mathrm{div}(X)= Af + Bg + Ch \text{   \ \ \  and}
\end{equation}
\begin{equation}\label{curl}
 \mathrm{curl}(X)= (Bh-Cg)A + (Cf-Ah)B + (Ag-Bf)C-2X.
\end{equation}
The gradient of a smooth function $\phi : S^3 \rightarrow \mathbb{R}$ is
\begin{equation}\label{grad}
\mathrm{grad}(\phi)= (A\phi)A + (B\phi)B + (C\phi) C
\end{equation}
The formula for $\mathrm{curl}$ is derived from the identities $\mathrm{curl}(A)=-2A$, $\mathrm{curl}(B)=-2B$, and ${\mathrm{curl}(C)=-2C}$, which can be verified directly, together with the Leibniz rule $$\mathrm{curl}(\phi X)=\mathrm{grad}(\phi)\times X + \phi\mathrm{curl}(X).$$

\section{The group  $\Aut_1(\xi)$ of strict contactomorphisms}\label{intersectionsection}
In this section we characterize the strict contactomorphisms of the standard tight contact structure $\xi$. 
\begin{prop}\label{intersection}
The group of strict contactomorphisms is the intersection of the contactomorphism group with the automorphism group of the Hopf fibration, 
$$\Aut_1(\xi)=\auth\cap \Aut(\xi).$$
\end{prop}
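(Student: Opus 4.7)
The plan is to prove the two containments separately, with the reverse inclusion being where the real content lies.

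For $\Aut_1(\xi)\subseteq\auth\cap\Aut(\xi)$: the $\Aut(\xi)$ side is immediate from $h^*\alpha=\alpha$. For $\auth$, I will use that the Reeb field $R$ of a contact form $\alpha$ is uniquely characterized by $\alpha(R)=1$ and $\iota_R d\alpha=0$. If $h^*\alpha=\alpha$, then also $h^*d\alpha=d\alpha$, so $h_*R$ satisfies these same two equations at each $h(x)$, forcing $h_*R=R$. Since $V_{\HH}=A$ is the Reeb field of $\alpha$, $h$ preserves the Hopf flow rigidly, and in particular permutes the oriented Hopf circles, so $h\in\auth$.

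For the reverse inclusion, take $h\in\auth\cap\Aut(\xi)$. Because $h$ permutes the oriented Hopf fibers, I can write $h_*A|_x=\mu(x)\,A_{h(x)}$ for some positive function $\mu$, while the contactomorphism condition gives $h^*\alpha=\lambda\alpha$ for a nowhere-zero $\lambda$. Pairing $h_*A$ with $\alpha$ shows $\mu=\lambda$, so in particular $\lambda>0$. The next goal is to force $\lambda\equiv 1$. First I will show $\lambda$ is constant: differentiating yields $h^*d\alpha=d\lambda\wedge\alpha+\lambda\,d\alpha$, and contracting with $A$ the left side vanishes since $\iota_A d\alpha=0$ and $h_*A$ is a multiple of $A$, while the right side equals $(A\lambda)\alpha-d\lambda$. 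Hence $d\lambda=(A\lambda)\alpha$, which immediately gives $B\lambda=C\lambda=0$. Non-integrability of $\xi$ then takes over: $[B,C]\lambda=0$, and since $[B,C]=2A$ by \autoref{brackets}, also $A\lambda=0$. Plugging back, $d\lambda=0$, so $\lambda$ is a positive constant.

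To upgrade ``$\lambda$ is a positive constant'' to ``$\lambda=1$'', I will use that $h$ is orientation-preserving (a property stated in the definition of $\auth$) together with $\int_{S^3}h^*\omega=\int_{S^3}\omega$ for any top form $\omega$. Since $h^*(\alpha\wedge d\alpha)=\lambda^2(\alpha\wedge d\alpha)$, integration forces $\lambda^2=1$, hence $\lambda=1$, so $h\in\Aut_1(\xi)$. The main point on which everything hangs is the short identity $d\lambda=(A\lambda)\alpha$: once it is in hand, non-integrability of $\xi$ captured by $[B,C]=2A$ does the heavy lifting, and the volume argument is immediate. I do not anticipate any genuine obstacle beyond keeping the contraction identities and signs straight in that step.
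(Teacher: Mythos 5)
Your proof is correct, and up through the constancy of $\lambda$ it is essentially the paper's argument: the forward inclusion via uniqueness of the Reeb field is identical, and your identity $d\lambda=(A\lambda)\alpha$, obtained by contracting $h^\dast d\alpha=d\lambda\wedge\alpha+\lambda\,d\alpha$ with $A$, is a slightly slicker packaging of the paper's computation of $(d\alpha)(F_\dast A,F_\dast B)$ and $(d\alpha)(F_\dast A,F_\dast C)$, after which both arguments use $[B,C]=2A$ from \autoref{brackets} to kill $A\lambda$. The only genuine divergence is the last step: the paper upgrades the constant $\lambda$ to $\lambda=1$ by the geometric observation that $F$ carries Hopf circles of length $2\pi$ to Hopf circles of length $2\pi$ while scaling their unit tangent by the constant $\lambda$, whereas you integrate $h^\dast(\alpha\wedge d\alpha)=\lambda^2\,\alpha\wedge d\alpha$ over $S^3$ and invoke orientation preservation to get $\lambda^2=1$, with positivity of $\lambda$ (from preservation of the fiber orientation) fixing the sign. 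Both are valid; the paper's step is more elementary and visibly geometric, while yours is the standard contact-volume argument and is robust even to sign worries, since $\lambda^2>0$ forces the orientation-preserving case.
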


\begin{proof}
We begin by showing that $\Aut_1(\xi)\subseteq \auth\cap \Aut(\xi)$. Suppose $F\in \Aut_1(\xi)$, so by definition $F$ is a diffeomorphism of $S^3$ that satisfies $F^\dast  \alpha=\alpha$. Recall that the  Reeb vector field $A$ associated with the 1-form $\alpha$   is uniquely characterized by
$$\alpha(A)=1 \text{ \ \  \  and \ \  \  } d\alpha(A,-)=0.$$
We consider the pushforward $F_\dast A$ of the vector field $A$ by the diffeomorphism $F$, and note that 
$$\alpha(F_\dast A)=(F^\dast \alpha)(A)=\alpha(A)=1,$$ and
$$d\alpha(F_\dast A, -)= F^\dast(d\alpha)(A, -)=d(F^\dast \alpha)(A,-)=d\alpha(A, -)=0.$$
By uniqueness of Reeb vector fields, we  have $F_\dast A= A$, so $F\in  \auth$. Therefore $F$ is in the intersection $\auth\cap \Aut(\xi)$.

Next, suppose that $F\in \auth\cap \Aut(\xi)$, so that 
$$F_\dast A=\lambda A \text{\ \ \  and \ \ \ } F^\dast \alpha =\mu \alpha,$$ where $\lambda$ and $\mu$ are smooth real-valued, positive functions on $S^3$. This gives us that
$$(F^\dast \alpha)(A)=\alpha(F_\dast A)=\alpha(\lambda A)=\lambda \alpha(A)=\lambda,$$ while at the same time
$$ (F^\dast \alpha)(A)=(\mu \alpha)(A)=\mu(\alpha(A))=\mu,$$ so it follows that $\lambda=\mu$. We now show that $\lambda=1$, so that $F^\dast \alpha=\alpha$, which will imply $F\in \Aut_1(\xi)$.  

Recall that $A,B,C$ is the left-invariant orthonormal frame field on $S^3$. Note that
$$(d\alpha) (F_\dast A, F_\dast B)=(d\alpha)(\lambda A, F_\dast B)=\lambda (d\alpha)(A, F_\dast B)=0,$$ while at the same time
\begin{eqnarray*}(d\alpha) (F_\dast A, F_\dast B)&=&F^\dast (d\alpha)(A,B)=d(F^\dast \alpha)(A,B)=d(\mu  \alpha)(A,B)=d(\lambda \alpha)(A,B)\\
&=& (d\lambda \wedge \alpha +\lambda d\alpha)(A,B)=(d\lambda \wedge \alpha)(A,B)\\
&=& (d\lambda)(A)\alpha(B)-(d\lambda)(B)\alpha(A)=-(d\lambda)(B)=-B(\lambda).
\end{eqnarray*}
Thus $B(\lambda)=0$. Similarly, we can show $C(\lambda)=0$. Lastly,
$$ \textstyle A(\lambda)=\frac{1}{2}[B,C]\lambda=\frac{1}{2}(BC-CB)\lambda=0,$$ and
 hence the function $\lambda\colon S^3\to \R$ must be constant. But since Hopf fibers are taken to Hopf fibers with $F_\dast A=\lambda A$ for constant $\lambda$, then $\lambda$ must be identically 1. Thus $\mu=1$, and so $F\in \Aut_1(\xi)$.
\end{proof}

We collect a few more useful properties of strict contactomorphisms. Note that the following proposition falls out of the proof of \autoref{intersection}, where we showed that for diffeomorphisms $F\in \Aut_1(\xi)=\auth\cap \Aut(\xi)$, we must have $F_\dast A=A$, namely they permute Hopf fibers rigidly.

\begin{prop}\label{rigidHopf}
The strict contactomorphism group $\Aut_1(\xi)$ is a subgroup of the strict automorphism group $\Aut_1(\HH)$ of the Hopf fibration.
\end{prop}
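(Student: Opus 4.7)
The proposition is an immediate consequence of what is already established in \autoref{intersection}, and the plan is simply to extract that piece of the argument explicitly.

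My approach would be as follows. Let $F \in \Aut_1(\xi)$, so that $F^\dast \alpha = \alpha$. The goal is to show $F_\dast V_{\HH} = V_{\HH}$, which (since $V_{\HH}=A$) is the defining condition for $F \in \Aut_1(\HH)$. The key fact I would invoke is the uniqueness of the Reeb vector field: the vector field $A$ is characterized among all vector fields on $S^3$ by the two conditions $\alpha(A)=1$ and $d\alpha(A,-)=0$. I would then repeat the brief calculation from the first half of the proof of \autoref{intersection}: since $F^\dast \alpha = \alpha$, we have
$$\alpha(F_\dast A) = (F^\dast\alpha)(A) = \alpha(A) = 1,$$
and
$$d\alpha(F_\dast A,-) = (F^\dast d\alpha)(A,-) = d(F^\dast\alpha)(A,-) = d\alpha(A,-) = 0.$$
By Reeb uniqueness this forces $F_\dast A = A$, i.e. $F \in \Aut_1(\HH)$.

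There is no real obstacle here, since the content of the proposition is literally a subclaim already proved inside \autoref{intersection}. The only writing choice is whether to reprove the Reeb calculation or simply cite the first paragraph of the proof of \autoref{intersection}; I would cite it and add a single sentence noting that $F_\dast A = A$ is, by definition, the condition that $F$ permutes the oriented Hopf fibers rigidly.
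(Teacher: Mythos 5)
Your proposal is correct and is essentially identical to the paper's justification: the paper also observes that the proposition falls out of the first half of the proof of \autoref{intersection}, where the Reeb uniqueness argument gives $F_\dast A = A$ for any $F$ with $F^\dast\alpha=\alpha$. Nothing further is needed.
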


Lastly, we record how elements in the simultaneous automorphism group of the Hopf fibration and the standard tight contact structure behave with respect to volume on $S^3$ and area on $S^2$.

\begin{prop}\label{areapres}
The diffeomorphisms of $S^3$ in $\Aut_1(\xi)$ are volume-preserving on $S^3$ and project to area-preserving diffeomorphisms of $S^2$ under the Hopf projection map $p$.
\end{prop}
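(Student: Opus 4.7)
The plan is to deduce both statements from two standard identities: that $\alpha\wedge d\alpha$ is (up to a constant) the Riemannian volume form on $S^3$, and that $d\alpha$ is the pullback under the Hopf map $p$ of the standard area form on $S^2$.

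First I would handle volume preservation. From the formulas in \autoref{prelims}, namely $d\alpha=-2\beta\wedge\dualC$ together with the fact that $\{\alpha,\beta,\dualC\}$ is the orthonormal coframe dual to $\{A,B,C\}$, one has
\[
\alpha\wedge d\alpha \;=\; -2\,\alpha\wedge\beta\wedge\dualC,
\]
which is $-2$ times the standard volume form on $S^3$. For $F\in\Aut_1(\xi)$ we have $F^\ast\alpha=\alpha$, and since $F^\ast$ commutes with $d$,
\[
F^\ast(\alpha\wedge d\alpha) \;=\; F^\ast\alpha\wedge d(F^\ast\alpha) \;=\; \alpha\wedge d\alpha.
\]
Thus $F$ preserves the volume form and hence is volume-preserving on $S^3$.

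Next I would handle area preservation. By \autoref{rigidHopf}, any $F\in\Aut_1(\xi)$ lies in $\Aut_1(\HH)$ and so descends under the Hopf map $p\colon S^3\to S^2$ to a diffeomorphism $f\colon S^2\to S^2$ satisfying $p\circ F=f\circ p$. The key observation is that $d\alpha$ is a basic form for the Hopf fibration: it is horizontal because $\iota_A\, d\alpha=d\alpha(A,-)=0$ (the Reeb condition), and it is $A$-invariant because
\[
\mathcal{L}_A\,d\alpha \;=\; d\,\mathcal{L}_A\alpha \;=\; d\bigl(\iota_A d\alpha+d\iota_A\alpha\bigr) \;=\; 0.
\]
Hence there exists a unique $2$-form $\omega$ on $S^2$ with $p^\ast\omega=d\alpha$, and since $d\alpha=-2\beta\wedge\dualC$ is nowhere zero on the horizontal $2$-planes $\xi$, $\omega$ is a nowhere-vanishing area form on $S^2$ (proportional to the round area form).

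Finally I would combine these. Applying $F^\ast$ to $p^\ast\omega=d\alpha$ and using $F^\ast d\alpha=d(F^\ast\alpha)=d\alpha$ together with $p\circ F=f\circ p$,
\[
p^\ast(f^\ast\omega) \;=\; (f\circ p)^\ast\omega \;=\; (p\circ F)^\ast\omega \;=\; F^\ast(p^\ast\omega) \;=\; F^\ast(d\alpha) \;=\; d\alpha \;=\; p^\ast\omega.
\]
Since $p$ is a surjective submersion, $p^\ast$ is injective on forms on $S^2$, so $f^\ast\omega=\omega$ and $f$ is area-preserving. No step looks to be a serious obstacle; the only point requiring a line of verification is that $d\alpha$ is basic, which follows immediately from the Reeb identities already recorded in \autoref{prelims}.
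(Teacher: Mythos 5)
Your proof is correct, but it takes a genuinely different route from the paper. The paper argues pointwise and geometrically: it computes $(\beta\wedge\dualC)(F_\dast B,F_\dast C)=1$ using $d\alpha=-2\beta\wedge\dualC$ and $F^\dast\alpha=\alpha$, so $F_\dast$ is area-preserving on the contact $2$-planes; it then combines this with $F_\dast A=A$ (rigid permutation of Hopf fibers, from \autoref{intersection}) to conclude volume preservation on $S^3$, and finally invokes the fact that $p$ is, up to scale, a Riemannian submersion to push area preservation down to $S^2$. You instead work entirely with differential forms: volume preservation comes from invariance of the contact volume form $\alpha\wedge d\alpha$, and area preservation on $S^2$ comes from the observation that $d\alpha$ is basic (horizontal by the Reeb condition, invariant by Cartan's formula), so $d\alpha=p^\ast\omega$, and injectivity of $p^\ast$ then forces $f^\ast\omega=\omega$. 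Your version is cleaner and more intrinsic, and it generalizes verbatim to any Boothby--Wang-type quantomorphism setting, whereas the paper's argument is more elementary and stays consistent with its Riemannian-submersion viewpoint used elsewhere (e.g.\ in the holonomy computation). One small point to tighten: concluding that $f$ is area-preserving in the usual (round-metric) sense requires knowing that $\omega$ is a \emph{constant} multiple of the round area form, which does not follow from mere nonvanishing; it does follow from the same submersion fact the paper uses, since $p^\ast$ of the area form of $S^2(\tfrac12)$ equals $\beta\wedge\dualC$, whence $\omega=-2\,d(\mathrm{area})$ by injectivity of $p^\ast$ --- worth one explicit line rather than a parenthesis.
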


\begin{proof}
Let $F\in \Aut_1(\xi)$, so that  $F^\dast\alpha=\alpha$. Hence $F$  takes the contact tangent 2-plane distribution $\xi$ to itself. We show that $F_\dast$ takes these tangent 2-planes to one another in an area-preserving way, as follows.

Recall the formulas that the dual forms to $A,B,C$ satisfy from \autoref{dualforms} and note that the area form on the tangent 2-planes in the distribution $\xi$ is $\beta\wedge \dualC$. We compute
\begin{eqnarray*}
\textstyle (\beta\wedge \dualC)(F_\dast B, F_\dast C)&=& \textstyle -\frac{1}{2}d\alpha (F_\dast B, F_\dast C)= -\frac{1}{2} (F^\dast d\alpha )(B,C)\\
\textstyle &=& \textstyle -\frac{1}{2} d(F^\dast\alpha )(B,C)=-\frac{1}{2}d\alpha (B,C)\\
\textstyle &=& (\beta\wedge \dualC)(B,C)=1.
\end{eqnarray*}
Thus indeed $F_\dast$ takes the 2-planes in the distribution $\xi$ to one another in an area preserving way.

We finish as follows. By \autoref{intersection}, $F_\dast(A)=A$, telling us that $F$ permutes Hopf fibers rigidly. And as we just saw above, $F_\dast$ is area-preserving on the tangent 2-planes orthogonal to the Hopf fibers. So it follows that $F$ is volume-preserving on $S^3$. Finally, since the Hopf projection $p\colon S^3\to S^2$ is up to scale a Riemannian submersion (it doubles lengths in $S^3$ orthogonal to the Hopf fibers), it follows that the diffeomorphism $F$ of $S^3$ projects to an area-preserving diffeomorphism of $S^2$, as claimed.
\end{proof}

\section{Bundle deformation retraction}\label{bundlesection}

Since we have shown that $\Aut_1(\xi) =\auth \cap \Aut(\xi)$, we know that each diffeomorphism $F$
of $S^3$ which lies in $\Aut_1(\xi)$ also lies in $\auth$, which means that it permutes the fibers of the
Hopf fibration $\HH$ and hence induces a diffeomorphism $f$ of $S^2$. The projection map $P\colon \Aut_1(\xi) \to \SDiff(S^2)$ is then defined by $P(F) = f$. We know from \cite{vizman} that this is a bundle with fiber $S^1$. We now begin the proof of our main theorem, namely that  the bundle
\begin{equation}
S^1 \hookrightarrow \mathrm{Aut}_1(\xi) \xrightarrow{P} \mathrm{SDiff}^+(S^2)
\end{equation}
deformation retracts to its finite-dimensional subbundle
\begin{equation}
S^1 \hookrightarrow U(2)  \rightarrow SO(3).
\end{equation}

We will prove this by starting with Wang's deformation retraction \cite{wang}  of the bigger base space $\mathrm{SDiff}^+(S^2)$ to the smaller base space $SO(3)$ and lifting it to the desired deformation retraction of the bigger total space $\mathrm{Aut}_1(\xi)$ to the smaller one, $U(2)$, in a way that moves fibers rigidly  throughout the deformation retraction while keeping fibers of the subbundle fixed pointwise.

\subsection{The standard $L^2$ Riemannian metric on $\Aut_1(\xi)$}	
To facilitate the lifting, we equip $\mathrm{Aut}_1(\xi)$ with the  $L^2$ Riemannian metric. Let $X$ and $Y$ be $C^\infty$ vector fields on $S^3$. We define their inner product as \begin{gather}\label{eq_l2_metric_id}
\begin{split}
\langle X,\, Y \rangle_{L^2} = \frac{1}{2\pi^2} \int\limits_{S^3} \langle X(x), Y(x) \rangle\, d\mathrm{vol}_x,
\end{split}
\end{gather}
where the point $x$ ranges over $S^3$ and where $d\mathrm{vol}_x$ is the Euclidean volume element on $S^3$.  The scale factor $ \frac{1}{2\pi^2} $ lets unit vector fields on $S^3$ have $L^2$ length equal to 1, since the volume of $S^3$ is $2\pi^2$, and this will simplify expressions later on.

The left-invariant vector field $A$ on $S^3$, given by $A(x)=xi$, lies in $T_{\mathrm{id}}\mathrm{Aut}_1(\xi)$, and its $L^2$ length is 1. By contrast, the left-invariant vector fields $B$ and $C$ on $S^3$ do \emph{not} lie in $T_{\mathrm{id}}\mathrm{Aut}_1(\xi)$.

At other points  $F \in \mathrm{Aut}_1(\xi)$, an element of the tangent space $T_F\Diff(S^3)$ is a vector field in $S^3$ along the diffeomorphism $F$, meaning  that it assigns to each point $x\in S^3$ a tangent vector to $S^3$ at the point $F(x)$. We will denote such an element of $T_F\Diff(S^3)$ by the symbol $X\circ F$, where $X$ is a smooth vector field on $S^3$ and where $X\circ F$ assigns to each point $x\in S^3$ the tangent vector $X(F(x))\in T_{F(x)}S^3$.

Then our $L^2$ Riemannian metric at the point  $F \in \mathrm{Aut}_1(\xi)$ is given by
\begin{eqnarray*}
\langle X \circ F ,\, Y \circ F \rangle_{L^2} &=&  \frac{1}{2\pi^2}  \int\limits_{S^3} \langle (X\circ F) (x), (Y \circ F) (x) \rangle\, d\mathrm{vol}_{x}\\
&=&  \frac{1}{2\pi^2}  \int\limits_{S^3} \langle X(x), Y(x) \rangle\, d\mathrm{vol}_{x},
\end{eqnarray*}
and is well-defined because all the diffeomorphisms $F$ of $S^3$ which lie in $\Aut_1(\xi)$ are volume-preserving by \autoref{areapres}.

This is a smooth, \emph{weak} Riemannian metric on $\mathrm{Aut}_1(\xi)$ in the sense that the topology induced on $\mathrm{Aut}_1(\xi)$ by the $L^2$ norm has fewer open sets than the $C^\infty$ topology.

The $L^2$ Riemannian metric on $\Aut_1(\xi)$ is right-invariant, but not left-invariant. The $S^1$ subgroup of $\Aut_1(\xi)$ which rotates all Hopf fibers by the same amount consists of isometries in this metric, and is the center of the group  $\Aut_1(\xi)$. The Fr\'echet group  $\Aut_1(\xi)$ is a Fr\'echet Lie subgroup of the Fr\'echet Lie group $\SDiff(S^3)$ of volume-preserving and orientation-preserving diffeomorphisms of $S^3$. 

\subsection{Lifting a single curve in S$\Diff^+(S^2)$ to $\Aut_1(\xi)$}
A path in $\Aut_1(\xi)$ will be said to be \emph{horizontal} if it is everywhere orthogonal to the $S^1$ fiber direction with respect to the $L^2$ Riemannian metric. Lifting paths in $\mathrm{SDiff}^+(S^2)$ to horizontal paths in $\mathrm{Aut}_1(\xi)$ will play a key role in the proof of our main theorem, so we establish the following lemma first.

\begin{lem}[Lifting Lemma]\label{lem_lift}
	Let $\gamma\colon [0, 1] \rightarrow \mathrm{SDiff}^+(S^2)$ be a smooth path in $\mathrm{SDiff}^+(S^2)$ with $\gamma(0)=f_0$, and let $F_0$ be an element of $\mathrm{Aut}_1(\xi)$ such that $P(F_0) = f_0$. Then there exists a unique horizontal path  $\overline{\gamma}\colon [0, 1] \rightarrow \mathrm{Aut}_1(\xi)$ such that $\overline{\gamma}(0) = F_0$ and $P(\overline{\gamma}) = \gamma$. 
\end{lem}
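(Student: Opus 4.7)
The plan is to reduce the construction of the horizontal lift to integrating a time-dependent vector field on the finite-dimensional manifold $S^3$, and then to derive uniqueness from the fiber structure of the $S^1$-bundle.

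First I would express $\dot\gamma(t) = w_t \circ \gamma(t)$ for a smoothly varying, divergence-free vector field $w_t$ on $S^2$. By exactness of the Lie-algebra sequence (1), $w_t$ admits a smooth lift $\widetilde{Y}_t \in T_\id \Aut_1(\xi)$, unique up to the vertical line $\R\cdot A$. I would then replace $\widetilde{Y}_t$ by the unique $L^2$-horizontal representative
$$Y_t \;=\; \widetilde{Y}_t \;-\; \langle \widetilde{Y}_t,\, A\rangle_{L^2}\, A.$$
Since $S^3$ is a compact finite-dimensional manifold, the smooth time-dependent vector field $Y_t$ has a well-defined flow $\Phi_t\colon S^3\to S^3$, jointly smooth in $(t,x)$, by classical ODE theory. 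The defining condition $Y_t \in T_\id\Aut_1(\xi)$ is $\mathcal{L}_{Y_t}\alpha = 0$, so Cartan's formula gives $\tfrac{d}{dt}\Phi_t^\dast\alpha = \Phi_t^\dast \mathcal{L}_{Y_t}\alpha = 0$ and hence $\Phi_t^\dast\alpha = \alpha$, i.e., $\Phi_t \in \Aut_1(\xi)$. I would then define $\overline{\gamma}(t) := \Phi_t \circ F_0$, so that $\overline{\gamma}(0)=F_0$ and $\dot{\overline{\gamma}}(t) = Y_t\circ \overline{\gamma}(t)$.

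Next I would verify horizontality, the projection identity, and uniqueness. Since elements of $\Aut_1(\xi)$ are volume-preserving by \autoref{areapres}, the $L^2$ metric is right-invariant, so $\langle Y_t\circ \overline{\gamma}(t),\, A\circ \overline{\gamma}(t)\rangle_{L^2} = \langle Y_t, A\rangle_{L^2} = 0$ and $\overline{\gamma}$ is horizontal throughout. For $P(\overline{\gamma}) = \gamma$, differentiating yields $\tfrac{d}{dt}P(\overline{\gamma}(t)) = w_t \circ P(\overline{\gamma}(t))$ with $P(\overline{\gamma}(0))=f_0=\gamma(0)$; since $\gamma$ satisfies the same ODE with the same initial condition, the two paths agree by pointwise uniqueness of the induced flow on $S^2$. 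For uniqueness of horizontal lifts, any second lift must take the form $\phi_{c(t)}\circ \overline{\gamma}(t)$ for some smooth $c\colon[0,1]\to S^1$ with $c(0)=0$, since it lies in the same fiber at each time; differentiating and using that the Hopf flow $\phi_s$ is an isometry of $S^3$ preserving $A$ (and hence the horizontal/vertical decomposition), the velocity acquires an extra vertical term $\dot c(t)\, A\circ (\phi_{c(t)}\overline{\gamma}(t))$, so horizontality forces $\dot c \equiv 0$ and hence $c \equiv 0$.

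The main subtlety is smoothness of the construction in the Fréchet sense, but this is mild: the construction is right-invariant and reduces entirely to the flow of a smooth time-dependent vector field on the compact manifold $S^3$, so it is handled by classical smooth dependence of ODEs on parameters together with the basic Fréchet Lie group facts recalled in the appendix. The more serious question of smoothness \emph{transverse} to $\gamma$ (i.e., when one varies the input path itself) is not addressed here and will be the genuine difficulty in the main theorem.
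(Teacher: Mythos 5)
Your argument is essentially correct, but it takes a genuinely different route from the paper. The paper's proof pulls the bundle back along $\gamma$ to an annulus over $[0,1]$, pulls the $L^2$ metric back to get a smooth line field transverse to the $S^1$ fibers there, and integrates that line field by finite-dimensional ODE theory, pushing the resulting section forward to obtain $\overline{\gamma}$; both existence and uniqueness come from the ODE on the annulus, and the only global inputs are the Fr\'echet bundle structure of $P$ and the smoothness of the pulled-back metric. You instead work infinitesimally: you lift the generating field $w_t=\dot\gamma(t)\circ\gamma(t)^{-1}$ through the Lie-algebra exact sequence (\autoref{Lieseq}), normalize to the unique $L^2$-horizontal representative $Y_t$, integrate on $S^3$, and obtain the explicit formula $\overline{\gamma}(t)=\Phi_t\circ F_0$; uniqueness then comes from $\ker P=S^1$ (exactness of the group sequence, \autoref{thm_exactness_groups}) together with the fact that the $S^1$ action is an isometry preserving the vertical direction. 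Your route buys an explicit horizontal generator (the zero-mean contact lift of $w_t$), the observation that all lifts over $\gamma$ are right-translates of a single flow---which recovers for free the remark after \autoref{lem_lift} that the circle's worth of lifts are carried to one another by the $S^1$ action---and an ODE on $S^3$ rather than on an abstract pullback; the paper's route needs only the bundle structure and no Lie-algebra input.

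Two points in your write-up deserve an explicit sentence. First, you assert that $w_t$ ``admits a smooth lift'' $\widetilde{Y}_t$; pointwise exactness of \autoref{Lieseq} gives a lift for each fixed $t$, but before invoking classical ODE theory you need $(t,x)\mapsto Y_t(x)$ to be jointly smooth. This is true and easy: the surjectivity argument in the paper produces the coefficient $f_t$ of the lift by integrating $\mathrm{grad}(f_t)=-(Ag_t)B-(Ah_t)C$, so the normalized lift is given by a continuous linear solution operator applied to $w_t$, and composing it with the smooth curve $t\mapsto w_t$ yields a smooth family---but this step should be stated, since it is the only place where anything beyond pointwise exactness is used. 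Second, in the uniqueness argument the smoothness (indeed differentiability) of $c(t)$ should be justified, e.g.\ by noting that $t\mapsto\overline{\gamma}'(t)\circ\overline{\gamma}(t)^{-1}$ is a smooth path into the finite-dimensional subgroup $S^1$ because group operations in $\Aut_1(\xi)$ are smooth. With those additions your proof is a complete, and somewhat more explicit, alternative to the paper's.
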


\vspace{-20pt}
\begin{figure}[h!]
	\begin{center}
		\includegraphics[scale=0.36]{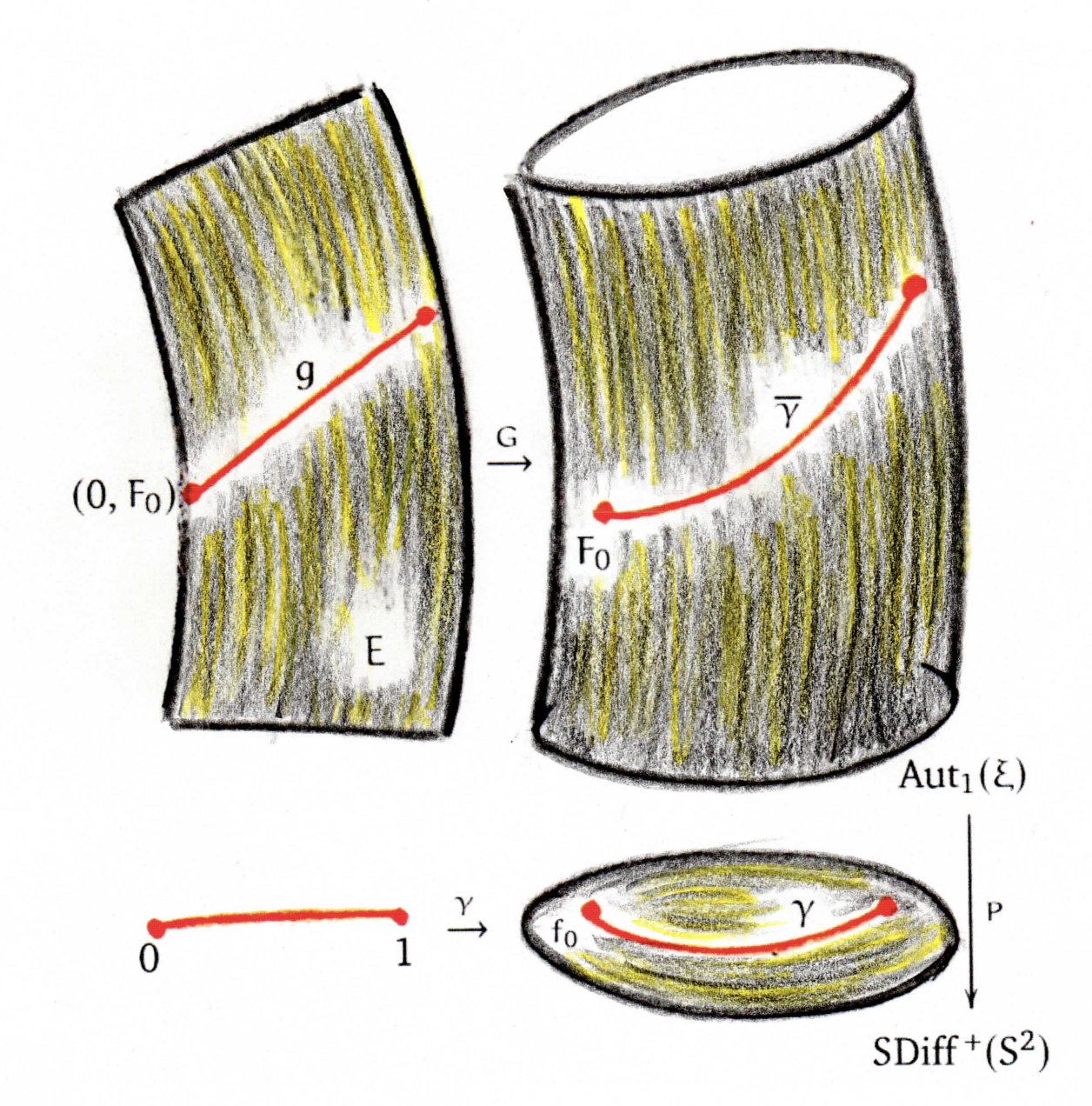}\caption{The path $\gamma$ in $\SDiff(S^2)$ lifts to the horizontal path $\overline{\gamma}$ in $\Aut_1(\xi)$}\label{pathliftspic}
	\end{center}
	\end{figure}

\vspace{-15pt}	
\begin{proof}
We start with the \emph{quantomorphism bundle}
\begin{equation}
Q \colon S^1 \hookrightarrow \mathrm{Aut}_1(\xi) \xrightarrow{P} \mathrm{SDiff}^+(S^2)
\end{equation}
and then use the map $\gamma\colon [0, 1] \rightarrow \mathrm{SDiff}^+(S^2)$ to construct the pullback bundle
\begin{equation}
\gamma^\dast Q \colon S^1 \hookrightarrow E \rightarrow [0, 1]
\end{equation}
over the interval $[0, 1]$. The familiar pullback construction extends to the category of Fr\'echet
manifolds and smooth maps \cite{KM}. The points of the total space $E$ are, as usual, the pairs $(t, F)$,
where $t \in [0, 1]$ and $F \in P^{-1}(\gamma(t))$. Since the base space is an interval, the total space $E$ is trivial, that is, an annulus
diffeomorphic to the product $[0, 1] \times S^1$. The bundle map $G\colon \gamma^\dast Q \rightarrow Q$ is defined by 
$G(t, F) = F$.


The smooth $L^2$ Riemannian metric on the Fr\'echet manifold $\mathrm{Aut}_1(\xi)$  pulls back to a smooth Riemannian metric on the annulus $E$. The horizontal tangent hyperplane distribution on $\mathrm{Aut}_1(\xi)$ is by definition the $L^2$ orthogonal
complement to the one-dimensional vertical fiber direction there. It pulls back to a smooth tangent line field on the annulus $E$ which is transverse to the vertical fiber direction there. Though it may not look horizontal to Euclidean eyes, we will say that this line field is ``horizontal'' on $E$.

Since $E$ is finite-dimensional, by the usual existence and uniqueness theorems for ordinary differential equations we get a horizontal path $t \mapsto g(t)$ on $E$ which begins at the point $(0, F_0)$. In particular, it is a cross-section of the pullback bundle $\gamma^\dast Q$. 

Pushing this horizontal path $g$ in $E$ forward by the bundle map $G\colon \gamma^\dast Q \rightarrow Q$, we get the
desired lift $\overline{\gamma}(t) = G(g(t))$ of $\gamma$ to a horizontal path in $\mathrm{Aut}_1(\xi)$ which begins at the given point $F_0$ in the fiber $P^{-1}(f_0)$. 

This completes the proof of the lifting lemma for single curves.
\end{proof}

\begin{rem}
If we let $F_0$ vary over all the points in the $S^1$-fiber $P^{-1}(\gamma(0))$, we get a circle's worth of disjoint lifts of $\gamma$ which are carried to one another by the action of the subgroup $S^1$ of $\Aut_1(\xi)$.
\end{rem}

\subsection{Lifting families of curves in S$\Diff^+(S^2)$ to $\Aut_1(\xi)$} Let $$\Phi\colon \mathrm{SDiff}^+(S^2) \times [0, 1] \rightarrow \mathrm{SDiff}^+(S^2)$$ be any smooth deformation of $\SDiff(S^2)$ within itself, meaning that $\Phi(f,0)=f$, without any other requirements. Then for each $f\in \SDiff(S^2)$ we have a smooth path $\gamma(t)=\Phi(f,t)$ in $\SDiff(S^2)$, and these paths vary smoothly with the choice of initial point $f$. By the Lifting \autoref{lem_lift}, we can lift each of these paths uniquely  to a horizontal path $\overline{\gamma}$ in $\Aut_1(\xi)$ once we specify its initial point $F\in P^{-1}(f).$ 

We know that each lifted path is smooth in the time parameter $t$, but we do not yet know that the collection of  lifts is smooth in the ``transverse direction'', meaning smoothly dependent on the initial points $F\in \Aut_1(\xi)$. In this section we prove smooth dependence on initial points.

As mentioned earlier, our plan is to define smooth ``local lifts'' of these paths, ignoring the fact that they do not fit together coherently to a global lift, and then show how to smoothly ``adjust'' these to the desired ``horizontal-in-time'' lifts, which are defined globally, and so conclude that they are indeed smoothly dependent on their initial points.

We start with the following lemma.

\begin{lem}\label{pieceslem}
For each point $f$ in $\SDiff(S^2)$, there is an open neighborhood $U$ of $f$ and a partition $0=t_0<t_1<\dots<t_n=1$ of the interval $[0,1]$ such that each image $\Phi(U\times [t_{k-1}, t_k])$ lies in an open set in $\SDiff(S^2)$ over which the $\Aut_1(\xi)$ bundle is trivial.
\end{lem}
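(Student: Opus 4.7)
The plan is to argue via a tube lemma, using local triviality of the quantomorphism bundle together with continuity of $\Phi$ and compactness of $[0,1]$.

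First I would fix $f \in \SDiff(S^2)$ and, for each $t \in [0,1]$, choose an open neighborhood $V_t$ of the point $\Phi(f,t)$ in $\SDiff(S^2)$ over which the bundle $S^1 \hookrightarrow \Aut_1(\xi) \to \SDiff(S^2)$ admits a local trivialization; such a $V_t$ exists because this is a Fr\'echet fiber bundle. Since $\Phi$ is continuous (in fact smooth) and the product topology on $\SDiff(S^2) \times [0,1]$ has products of open sets as a basis, for each $t$ there is an open neighborhood $U_t$ of $f$ in $\SDiff(S^2)$ and an open interval $I_t \ni t$ in $[0,1]$ such that $\Phi(U_t \times I_t) \subset V_t$.

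Next I would invoke compactness of $[0,1]$: the collection $\{I_t\}_{t \in [0,1]}$ is an open cover, so it admits a finite subcover $I_{s_1}, \dots, I_{s_m}$. Setting
\[
U = \bigcap_{j=1}^{m} U_{s_j},
\]
which is an open neighborhood of $f$ since the intersection is finite, we have $\Phi(U \times I_{s_j}) \subset V_{s_j}$ for each $j$. Finally, I would choose a partition $0 = t_0 < t_1 < \dots < t_n = 1$ of $[0,1]$ fine enough that each subinterval $[t_{k-1}, t_k]$ is contained in some $I_{s_{j(k)}}$; this is possible because the $I_{s_j}$ form an open cover of the compact interval $[0,1]$ (take $\{t_k\}$ to be a refinement with mesh smaller than the Lebesgue number of the cover). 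Then $\Phi(U \times [t_{k-1}, t_k]) \subset V_{s_{j(k)}}$, and each $V_{s_{j(k)}}$ is by construction a trivializing open set of the bundle.

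There is no real obstacle here; the only thing to be slightly careful about is that $\SDiff(S^2)$ is a Fr\'echet manifold rather than a finite-dimensional one, so one should verify that the usual topological facts (product topology having a product basis, continuity of $\Phi$ at a point giving neighborhoods of the above form, local triviality of the pullback/bundle) carry over. All of these hold in the Fr\'echet setting as used throughout the paper, so the argument is essentially a standard tube-lemma-plus-Lebesgue-number argument.
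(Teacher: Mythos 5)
Your proposal is correct and follows essentially the same argument as the paper: continuity of $\Phi$ plus local triviality give product neighborhoods $U_t \times I_t$ mapping into trivializing open sets, compactness of $[0,1]$ yields a finite subcover, $U$ is the finite intersection of the corresponding $U_t$, and the partition is chosen subordinate to the cover. The Lebesgue-number remark is just a slightly more explicit way of phrasing the paper's "partition subordinate to this covering."
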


\begin{figure}[h!]
	\begin{center}
		\includegraphics[scale=.25]{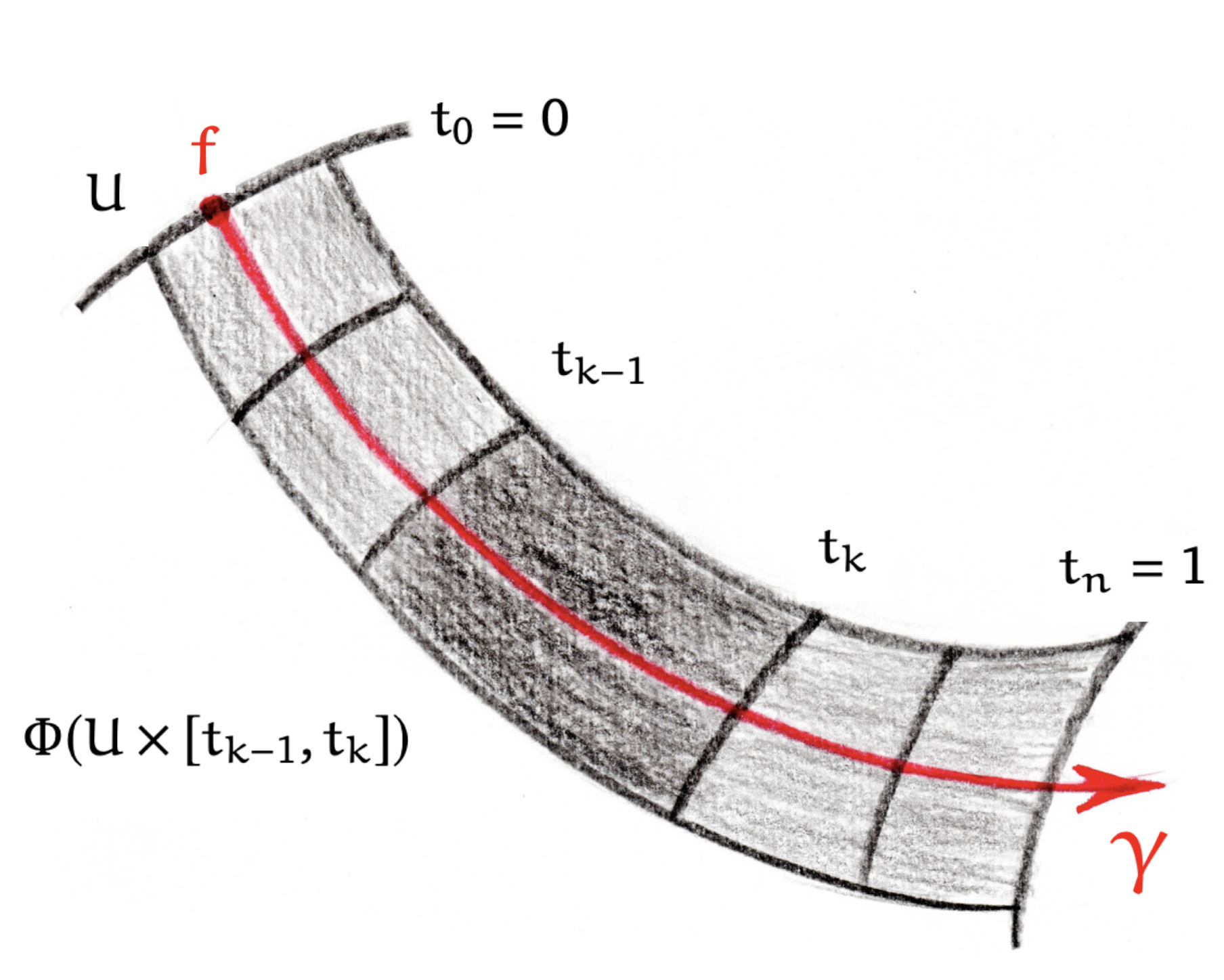}\caption{The $\Aut_1(\xi)$ bundle is trivial over each piece.} \label{}
	\end{center}
	\end{figure}
	
\begin{proof}
It follows from the continuity of our deformation $\Phi$ that for each point $(f,t)$ in its domain, there is an open neighborhood $U_t$ of $f$ in $\SDiff(S^2)$ and a real number $\epsilon_t$ such that the image $\Phi(U_t\times (t-\epsilon_t, t+\epsilon_t))$ lies in an open set in $\SDiff(S^2)$ over which our $\Aut_1(\xi)$-bundle is trivial. 

By compactness finitely many of these open intervals $(t-\epsilon_t, t+\epsilon_t)$ cover $[0,1]$, and we can simply let $U$ be the intersection of the finitely many corresponding open sets $U_t$, and choose a partition of $[0,1]$ subordinate to this covering of $[0,1]$. This proves the lemma.
\end{proof}

\subsubsection*{Defining the smooth local lifts} We choose any point $f\in \SDiff(S^2)$ and focus on one of the pieces $\Phi(U\times [t_{k-1}, t_k])$ of our tubular neighborhood $\Phi(U\times [0,1])$ of the curve $\gamma=\Phi(f\times [0,1])$. By \autoref{pieceslem}, there is an open set $V\subset \SDiff(S^2)$ which contains this piece and over which the bundle $$S^1\hookrightarrow \Aut_1(\xi)\xrightarrow{P} \SDiff(S^2)$$ is trivial. Let $\sigma\colon V\times S^1\to \Aut_1(\xi)$ be a smooth trivialization of this bundle over $V$. Then, picking and fixing any point $\varphi\in S^1$, we have a smooth local lift 
$$\sigma(\Phi(f,t), \varphi), \text{ where } \ f\in U,\ t\in[t_{k-1},t_k]$$ of the $k^{\text{th}}$ piece of $\Phi(U\times [0,1])$ to the total space $\Aut_1(\xi)$ of our bundle, as desired. 

\subsubsection*{Adjusting the local lifts to prove that the global horizontal lift is smooth}

To simplify the notation, let $F\in P^{-1}(f)$, and define 
$$F_t=\sigma(\Phi(f,t),\varphi)\in \Aut_1(\xi)\subset \Diff(S^3).$$
We want to adjust each such diffeomorphism $F_t$ along the $S^1$ fiber through it in $\Aut_1(\xi)$ by an angle $\theta(f,t)$ so that the corrected family of diffeomorphisms $$G_t=F_t\ e^{i\theta(f,t)}$$ is horizontal with respect to $t$ in the $L^2$ Riemannian metric on $\Aut_1(\xi)$ for each $f\in U$. In this notation, $e^{i\theta(f,t)}$ denotes the diffeomorphism of $\Aut_1(\xi)$ which rotates all $S^1$ fibers through the angle $\theta(f,t)$. For simplicity of notation we will write $\theta(t)$ instead of $\theta(f,t)$, and tacitly understand dependence of this angle on the initial diffeomorphism $f$ of $S^2$.

	
Regard the right side of the equation $$G_t=F_t \ e^{i\theta(t)}$$ as a product in the group $\Aut_1(\xi)$, and apply the Leibniz Rule when differentiating it with respect to time $t$ to get
\begin{eqnarray*}
\frac{d}{dt} G_t &=& \frac{d}{dt}(F_t \ e^{i\theta(t)})\\
&=& \left(\frac{d}{dt} F_t\right)e^{i\theta(t)}+F_t\left( \frac{d}{dt} e^{i\theta(t)}\right)\\
&=& (X_t\circ F_t)\ e^{i\theta(t)}+F_t(e^{i\theta(t)}i\theta'(t))\\
&=& X_t\circ G_t+G_t \ i\theta'(t),
\end{eqnarray*}
where $X_t$ is the time-dependent vector field on $S^3$ generated by the one-parameter family of diffeomorphisms $F_t$ of $S^3$, so that $\frac{d}{dt} F_t= X_t\circ F_t$, and where $X_t\circ G_t$ is a vector field along $G_t$.

In the last equation, the first term $X_t\circ G_t$ is a vector field on $S^3$, and so is the second term $G_t i\theta'(t)$, even though it may not look so at first glance. The vector field $G_t i\theta'(t),$ when evaluated at a point $x\in S^3$, lies in $T_{G_t(x)}S^3$,  is tangent to the Hopf fiber through that point, and is scaled to have length $\theta'(t)$. That is the same as the vector field $A$ at the point $G_t(x)$, scaled to length $\theta'(t)$. So we can write
$$G_t(x)\ i\theta'(t)=A(G_t(x))\ \theta'(t),$$
or dropping the point $x$ from the notation, we have 
$$G_t\ i\theta'(t)=(A\circ G_t)\ \theta'(t).$$
Inserting this into the last term of our above computation of the derivative $\frac{d}{dt} G_t$, we get $$\frac{d}{dt} G_t=X_t\circ G_t + (A\circ G_t)\ \theta'(t),$$ and will continue on from here. 

We keep in mind that our goal is to find the family of rotations $e^{i\theta(t)}$ of $S^3$ which will make the ``adjusted'' curves of quantomorphisms
$$G_t(x)=F_t(x)\ e^{i\theta(t)}$$ horizontal in time with respect to the $L^2$ Riemannian metric on $\Aut_1(\xi)$. To this end, we consider the tangent space to $\Aut_1(\xi)$ at any point $G$, and let $\pi$ denote its projection to the one-dimensional ``vertical'' subspace tangent to the $S^1$-fiber direction,
$$\pi\colon T_G\Aut_1(\xi)\to \mathrm{Vert}_G.$$

Then we write
\begin{eqnarray*}
\pi(X\circ G)&=&\langle X\circ G, A\circ G\rangle_{L^2}\ (A\circ G)\\
&=& \langle X, A\rangle_{L^2}\ (A\circ G),
\end{eqnarray*}
thanks to the invariance of our $L^2$ metric under the action of $G$, and to the fact that $A\circ G$ is a unit vector tangent to $\Aut_1(\xi)$ at $G$.

Now we apply this vertical projection $\pi$ to our earlier equation, and set the result equal to zero to require it to be horizontal,
\begin{eqnarray*} 0=\pi\left(\frac{d}{dt} G_t\right) &=& \pi\Bigl(X_t\circ G_t+(A\circ G_t)\ \theta'(t)\Bigr)\\
&=&\langle X_t, A\rangle_{L^2}\ (A\circ G_t) +\langle A, A \rangle_{L^2}\ \theta'(t)\ (A\circ G_t).
\end{eqnarray*}

We drop the vertical vector $(A\circ G_t)$ from above and save only its coefficient, recall that $\langle A, A\rangle_{L^2}=1$, and  are left with the scalar equation
$$0=\langle X_t, A\rangle_{L^2} +\theta'(t),$$
or equivalently
$$\theta'(t)=- \langle X_t, A\rangle_{L^2}.$$

We recall that $\frac{d}{dt} F_t=X_t\circ F_t$, so $X_t=\frac{d}{dt} F_t\circ F_t^{-1}.$ Inserting this above, we get
$$\langle X_t, A\rangle_{L^2}= \left\langle \frac{d}{dt} F_t\circ F_t^{-1}, A\right\rangle_{L^2} =  \left\langle \frac{d}{dt} F_t, A\circ F_t \right\rangle_{L^2}.$$
Thus
$$\theta'(t)= -\left\langle \frac{d}{dt} F_t, A\circ F_t \right\rangle_{L^2}.$$

This makes sense because we want to eliminate the vertical component of $\frac{d}{dt}F_t$ in order to move $F_t$ to $G_t$. Integrating, we get

\begin{eqnarray*}
\theta(t) &=& \theta(t_{k-1})-\int_{t_{k-1}}^t \left\langle  \frac{d}{ds} F_s, A\circ F_s\right\rangle_{L^2} \ ds\\
&=& \theta(t_{k-1}) -\frac{1}{2\pi^2} \int_{t_{k-1}}^t \int_{S^3} \left\langle  \frac{d}{ds} F_s(x), A\circ F_s(x)\right\rangle \ d\mathrm{vol}_x\  ds.
\end{eqnarray*}

The last equation tells us that the adjusting angle $\theta(t)$ depends smoothly on the initial angle $\theta(t_{k-1})$ and on $F_t=\sigma(\Phi(f,t), \varphi)$, which itself depends smoothly on the diffeomorphism $f\in \SDiff(S^2)$ and  the time $t$.

On each subinterval $[t_{k-1}, t_k]$, the initial angle $\theta(t_{k-1})=\theta(f, t_{k-1})$ depends smoothly on $f$ by the above construction for the preceding time interval $[t_{k-2}, t_{k-1}]$, and we start with $\theta(0)=0$.

Thus the adjusted family of quantomorphisms $$G_t(x)=F_t(x)\ e^{i\theta(t)},$$ with $F_t=\sigma(\Phi(f,t), \varphi)$, $f\in U$ and $t\in [t_{k-1}, t_k]$, is horizontal in time and depends smoothly on $(f,t)$, which is exactly what we were aiming for.

\subsection{Completing the proof of the main theorem} We adjust notation as follows. Let $\gamma(t)$ be a smooth path in $\SDiff(S^2)$, with $t\in [0,1]$, beginning at the point $\gamma(0)=f_0$, and let $F_0$ be a point in $P^{-1}(f_0)$. Then we will write 
$$\overline{\gamma}(t)=\overline{\gamma}(\gamma, F_0, t)$$ to designate the horizontal path in $\Aut_1(\xi)$ which covers $\gamma$ and which begins at the point $\overline{\gamma}(0)= F_0$. This is the path of quantomorphisms that we called $G_t$ above. 

Now we  have all the ingredients we need to complete the proof of our main theorem, which we restate.

\begin{thm}\label{U2def}
	In the category of Fr{\'e}chet Lie groups and $C^\infty$ maps, the fiber bundle 
$$S^1\hookrightarrow \Aut_1(\xi) \to \SDiff(S^2)$$
 deformation retracts to its finite-dimensional subbundle
$$S^1\hookrightarrow U(2) \to SO(3),$$
where the $S^1$ fibers move rigidly during the deformation. \end{thm}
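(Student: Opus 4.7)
The plan is to define the deformation $\Psi\colon \Aut_1(\xi)\times [0,1]\to \Aut_1(\xi)$ by horizontally lifting Mu-Tao Wang's deformation retraction $\Phi$ of $\SDiff(S^2)$ onto $SO(3)$. For each $F\in\Aut_1(\xi)$, let $\gamma_F(t):=\Phi(P(F),t)$ be the Wang trajectory through $P(F)$, and set $\Psi(F,t):=\overline{\gamma}(\gamma_F, F, t)$, the unique $L^2$-horizontal lift of $\gamma_F$ starting at $F$ furnished by \autoref{lem_lift}.

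Granting joint smoothness for the moment, the four required properties are quick to verify. Since $\gamma_F(0)=P(F)$ and the lift begins at $F$, we have $\Psi(F,0)=F$. Since Wang's retraction lands in $SO(3)$ at $t=1$, $P(\Psi(F,1))\in SO(3)$; because the restriction of the $\Aut_1(\xi)$-bundle over $SO(3)\subset\SDiff(S^2)$ is precisely the central subbundle $U(2)\to SO(3)$, we get $P^{-1}(SO(3))=U(2)$, hence $\Psi(F,1)\in U(2)$. If $F\in U(2)$, then Wang's retraction fixes $P(F)$ pointwise at all times, so $\gamma_F$ is constant; the constant path at $F$ is trivially horizontal and covers $\gamma_F$, and by uniqueness $\Psi(F,t)=F$ for all $t$, so the subbundle $U(2)$ is fixed pointwise throughout. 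For the rigid motion of $S^1$-fibers: because the central $S^1$-subgroup of $\Aut_1(\xi)$ acts by $L^2$-isometries, right-translation by $e^{i\psi}$ preserves horizontality and commutes with $P$, so by uniqueness $\Psi(F\cdot e^{i\psi},t) = \Psi(F,t)\cdot e^{i\psi}$, which is exactly the rigid-fiber condition.

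The main obstacle is the joint smoothness of $\Psi$ in $(F,t)$, since \autoref{lem_lift} only guarantees smoothness in $t$ for each fixed $F$. This is precisely what the local-lift and adjustment machinery of the preceding subsection was set up to handle. Around each $f\in\SDiff(S^2)$, \autoref{pieceslem} provides a neighborhood $U$ and a partition $0=t_0<\dots<t_n=1$ so that the bundle trivializes over each piece $\Phi(U\times[t_{k-1},t_k])$; selecting a trivialization $\sigma$ and a fixed fiber point $\varphi$ produces a smooth local lift $F_t = \sigma(\Phi(f,t),\varphi)$. I then adjust it by the $S^1$-rotation $e^{i\theta(f,t)}$, where $\theta$ solves the ODE $\theta'(t) = -\langle \tfrac{d}{dt}F_t,\, A\circ F_t\rangle_{L^2}$ with initial condition handed over from the previous subinterval and $\theta(f,0)=0$. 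The explicit integral formula for $\theta$ shows it is smooth in $(f,t)$, so the adjusted curve $G_t := F_t\, e^{i\theta(f,t)}$ is a smooth family of horizontal lifts. By the uniqueness clause of \autoref{lem_lift}, $G_t$ coincides with $\Psi(F,t)$ for the matching initial point $F\in P^{-1}(f)$, proving $\Psi$ is $C^\infty$ near every $(F,t)$ and hence globally. Assembling these pieces completes the proof.
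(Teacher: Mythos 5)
Your proposal is correct and follows essentially the same route as the paper: define the deformation by the $L^2$-horizontal lift of Wang's trajectories, obtain joint smoothness from the local trivializations of \autoref{pieceslem} together with the angle-adjustment ODE $\theta'(t)=-\langle \tfrac{d}{dt}F_t, A\circ F_t\rangle_{L^2}$, and use the central $S^1$ acting by isometries plus uniqueness of horizontal lifts to get rigid fiber motion and pointwise fixity of $U(2)$. Your identification $P^{-1}(SO(3))=U(2)$ (via exactness of the group sequence) just makes explicit a step the paper leaves tacit.
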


	\begin{figure}[h!]
	\begin{center}
		\includegraphics[scale=0.45]{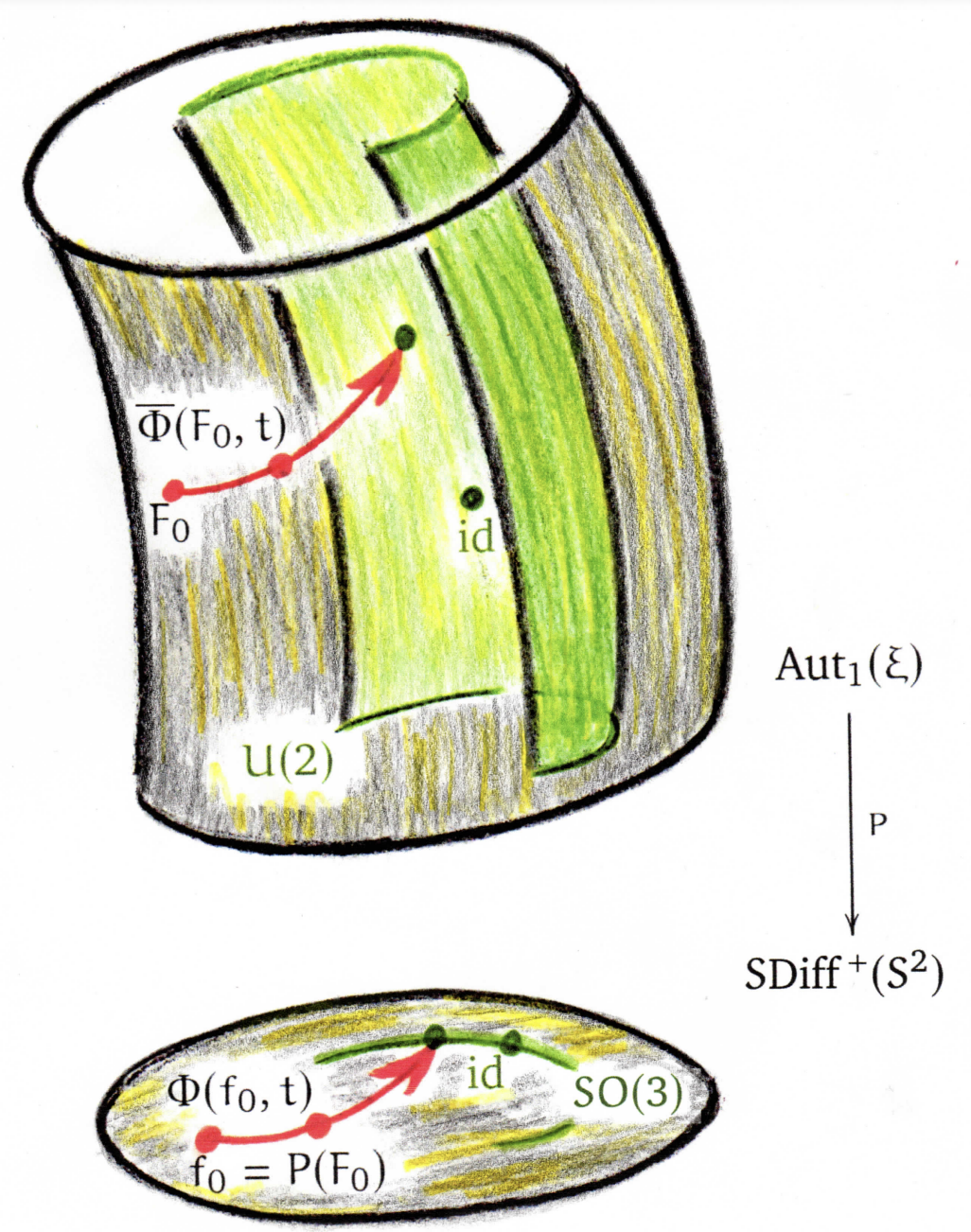}\caption{Deformation retraction of $\Aut_1(\xi)$ onto $U(2)$}\label{deformationpic}
	\end{center}
	\end{figure}	

\begin{proof}
	Let $\Phi\colon \SDiff(S^2) \times [0, 1] \rightarrow \SDiff(S^2)$ be the deformation retraction of $\SDiff(S^2)$ to the orthogonal group $SO(3)$ given by Mu-Tao Wang's theorem from \cite{wang}. We lift $\Phi$ to a deformation retraction
	\begin{equation}
	\overline{\Phi}\colon \mathrm{Aut}_1(\xi) \times [0, 1] \rightarrow \mathrm{Aut}_1(\xi)
	\end{equation}
	of $\mathrm{Aut}_1(\xi)$ to its subgroup $U(2)$ by defining
	\begin{equation}
	\overline{\Phi}(F_0,\, t) = \overline{\gamma}\big(\gamma,\ F_0, \ t\big),
	\end{equation}
	where $\gamma$ is the path in $\SDiff(S^2)$ which starts at the point $f_0 = P(F_0)$ and follows Wang's deformation retraction, $\gamma(t)=\Phi(f_0, t)$, and where $\overline{\gamma}(\gamma, F_0, t)$ is the horizontal lift of $\gamma$ defined above.
	
	The deformation retraction $\overline{\Phi}$ of $\Aut_1(\xi)$ moves along horizontal curves which cover the corresponding paths of the deformation retraction $\Phi$ of $\SDiff(S^2)$. The subgroup $S^1$ of $\Aut_1(\xi)$ which rotates all Hopf fibers by the same amount consists of isometries in this metric, and so carries horizontal paths to horizontal paths. Thus the $S^1$ fibers of $\Aut_1(\xi)$ move rigidly among themselves during the deformation retraction $\overline{\Phi}.$ 
	
	At the end of the deformation retraction, $\Phi$ has compressed $\SDiff(S^2)\times \{1\}$ to the orthogonal group $SO(3)$, and $\overline{\Phi}$ has compressed $\mathrm{Aut}_1(\xi) \times \{1\}$ to the unitary group $U(2)$.
	
	A point $f_0$ in $\SDiff(S^2)$ which starts out in the subgroup $SO(3)$ does not move during this process, and likewise a point $F_0$ in $\mathrm{Aut}_1(\xi)$ which starts out in the subgroup $U(2)$ does not move.
	This completes the proof of our main theorem.
\end{proof}

\newpage
\part{\large The Fr\'echet bundle structure of the space of strict contactomorphisms}\label{secondpart}

We begin this part of our paper by describing nearest neighbor maps, horizontal lifts and
quantitative holonomy, and then compute the tangent spaces at the identity of our various Lie
groups. After that, we give independent, self-contained proofs of the exactness of our
sequence of Lie algebras and the exactness and bundle structure of our sequence of Lie groups,
proved earlier by the many mathematicians cited in \autoref{intro}.

\section{Nearest neighbor maps, horizontal lifts and quantitative holonomy}\label{appendix_nn}


\subsection{Nearest neighbor maps} 
Let $C$ and $C'$ be two Hopf fibers on $S^3$ which are not orthogonal to one another, or equivalently, whose projections to $S^2$ are not antipodal. These two Hopf fibers are a constant distance, say $\delta < \pi/2$ apart on $S^3$.

Thus, each point $x$ on $C$ has a unique \emph{nearest neighbor} $x'$ on $C'$, which is the point that minimizes the distance between $x$ and $C'$. Similarly, $x'$ on $C'$ has $x$ on $C$ as its nearest neighbor there. Furthermore, the correspondence between $x$ on $C$ and $x'$ on $C'$ is an isometry between these two circles. 

The nearest neighbor map between the Hopf fibers $C'$ and $C$ takes the point
$$x' = (\cos\delta \cos\theta, \cos\delta \sin\theta, \sin\delta \cos\phi, \sin\delta \sin\phi) \text{  on  } C'$$ to the point $x= (\cos\theta, \sin\theta, 0, 0)$ on  $C$, as depicted in \autoref{neighborpic}.

\begin{figure}[h!]
\begin{center}
	\includegraphics[scale=0.4]{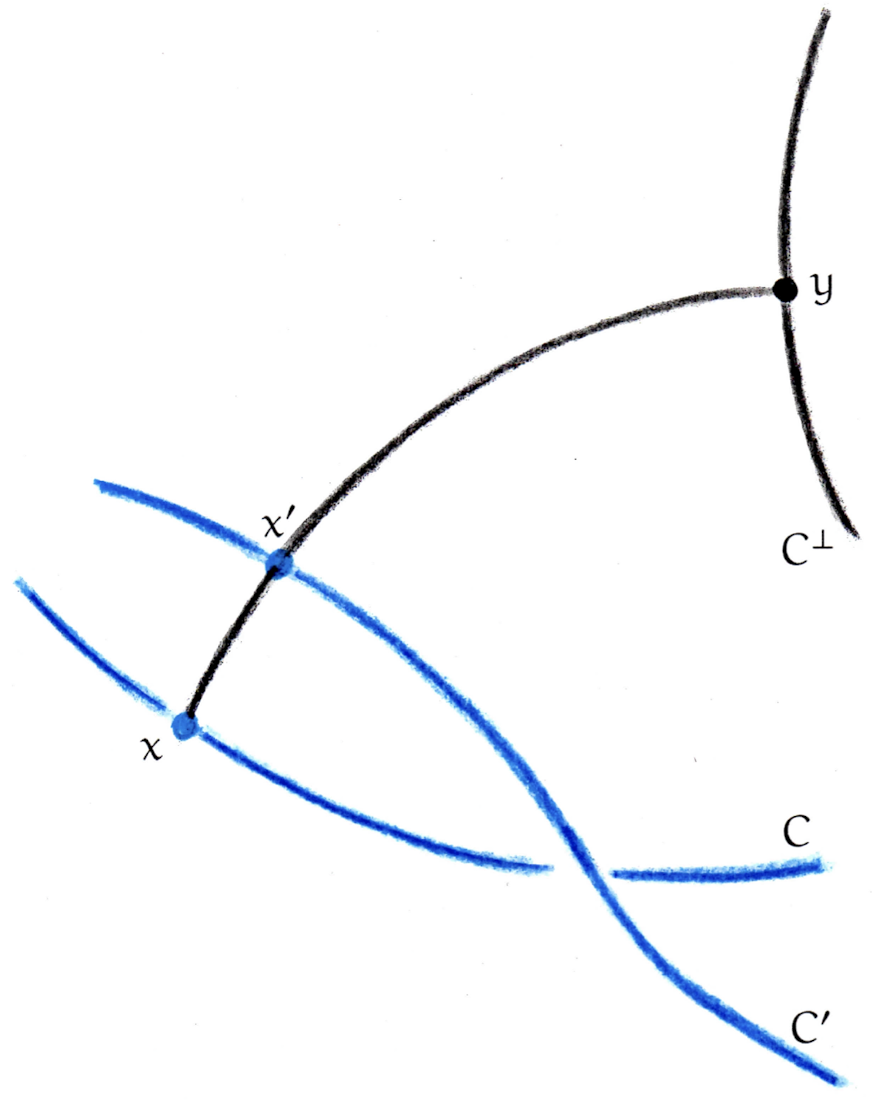}\caption{The points $x$ and $x'$ are nearest neighbors on the great circles $C$ and $C'$}\label{neighborpic}
\end{center}
\end{figure}


The composition of nearest neighbor maps $C \rightarrow C' \rightarrow C''$ is \emph{not} necessarily the nearest neighbor map $C \rightarrow C''$, and if we move along a succession of nearest neighbor maps out from $C$ and eventually back again to $C$, the composition will be some rotation of $C$. In related settings, a similar phenomenon is called \emph{holonomy}, so we will use that term here as well.

\subsection{Horizontal lifts}\label{horizontal} Consider the Hopf projection $p \colon S^3 \rightarrow S^2$ and let $\gamma\colon [0, 1] \rightarrow S^2$
be a smooth curve. Given a point $x$ on the Hopf fiber $p^{-1}(y)$, there exists a  smooth curve ${\overline{\gamma}\colon [0, 1] \rightarrow S^3}$ which is unique and runs always orthogonal to Hopf fibers, covers $\gamma$ in the sense that $p \circ \overline{\gamma} = \gamma$ and satisfies $\overline{\gamma}(0) = x$. We refer to $\overline{\gamma}$ as a \emph{horizontal lift} of $\gamma$ because we think of Hopf fibers as being ``vertical'' and the orthogonal tangent 2-planes as being ``horizontal''. In fact, viewing $S^3$ as a principal $U(1)$-bundle over $S^2$, the horizontal lift is parallel transport with respect to the connection defined by the 1-form $\alpha$. If $\gamma$ is a geodesic in $S^2$ between
the non-antipodal points $y_1$ and $y_2$, then the horizontal lifts of $\gamma$ give us the nearest
neighbor map between the Hopf fibers $p^{-1}(y_1)$ and $p^{-1}(y_2)$.

\subsection{Quantitative holonomy}\label{quantitativeholonomy}

In the Hopf fibration $\HH$, we choose radius $1/2$ for the base 2-sphere, so that the projection map $p\colon S^3\to S^2(\frac{1}{2})$ is a Riemannian submersion, meaning that its differential takes tangent 2-planes
orthogonal to the Hopf fibers isometrically to their images in $S^2(\frac{1}{2})$.

\begin{figure}[h!]
\begin{center}
	\includegraphics[scale=0.45]{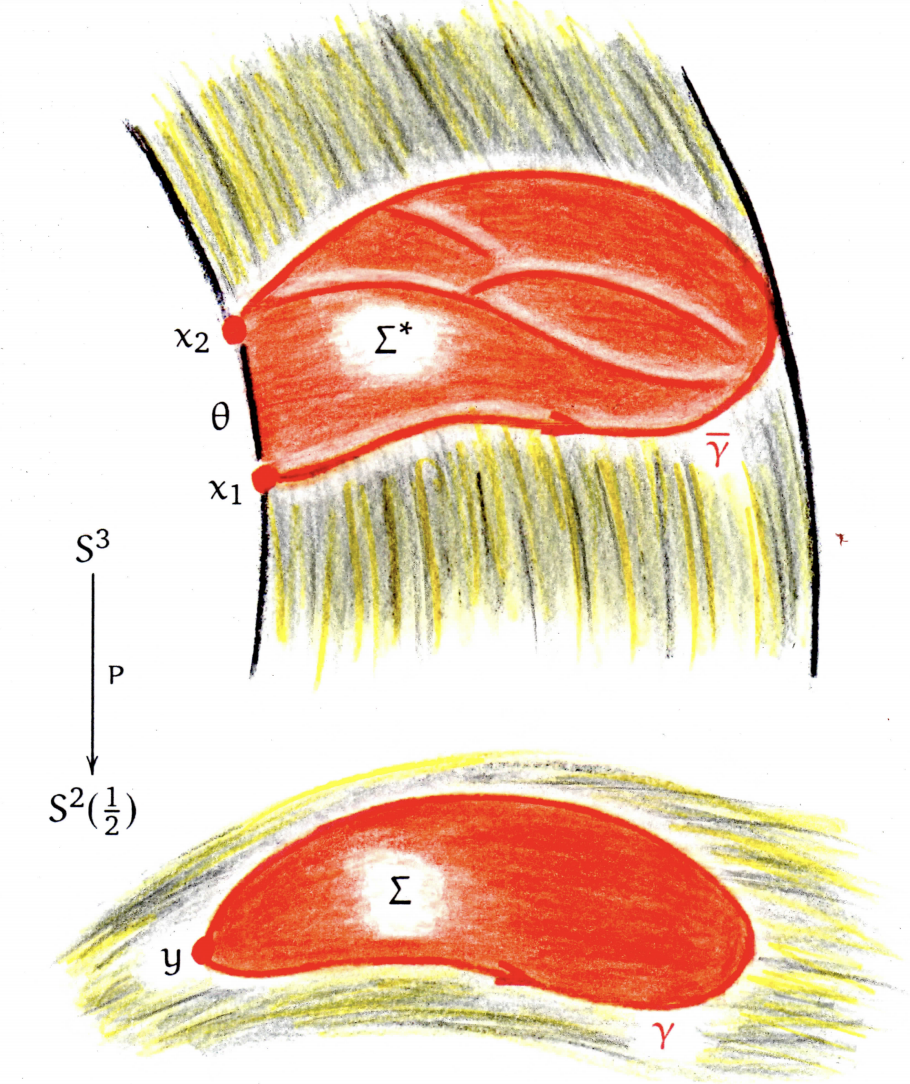}\caption{Holonomy}\label{holonomypic}
\end{center}
\end{figure}

In \autoref{holonomypic} we consider a loop $\gamma$ in $S^2(\frac{1}{2})$ based at the point $y$, and the region $\Sigma$ of $S^2(\frac{1}{2})$ that it bounds. We pick a point $x_1\in p^{-1}(y)$, and consider the horizontal lift $\overline{\gamma}$ of $\gamma$ beginning at $x_1$.\footnote{We warn the reader about the very similar notation for paths, which are denoted by $\gamma$ and the dual form to $C$, which is denoted by $\Upsilon$, since they both appear in this subsection.}

The holonomy here is illustrated by the fact that when the lift $\overline{\gamma}$ returns to the fiber $p^{-1}(y)$, it does so at a point $x_2$ of that fiber, displaced by an angle $\theta$ from the starting point $x_1$. So $\overline{\gamma}$ followed by the arc on $p^{-1}(y)$ from $x_2$ to $x_1$ is a loop in $S^3$. This loop bounds a region $\Sigma^\dast$ in $S^3$, which projects down via $p$ to the region $\Sigma$ on $S^2(\frac{1}{2}).$

We claim that the holonomy angle $\theta$ is given by $$\theta= 2 \text{(area of }\Sigma) \text{ on } S^2({\textstyle \frac{1}{2}}),$$ and confirm this as follows:
$$\text{area of } \Sigma \text{ on } S^2({\textstyle \frac{1}{2}})=\int_\Sigma d(\text{area})=\int_{\Sigma^\dast} p^\dast d(\text{area})=\int_{\Sigma^\dast} \beta \wedge \dualC,$$
using the fact that the Hopf projection $p\colon S^3\to S^2(\frac{1}{2})$ is a Riemannian submersion, and so is area-preserving on the 2-form $\beta\wedge \dualC$, down to the usual area form on $S^2(\frac{1}{2}).$ From \autoref{dualforms} we have that $d\alpha=-2\beta\wedge \dualC$, and hence $d(-\frac{1}{2}\alpha)=\beta \wedge \dualC$. 

Using Stokes' theorem, we get
$$\int_{\Sigma^\dast} \beta\wedge \dualC=\int_{\Sigma^\dast} d(-\frac{1}{2}\alpha)=-\frac{1}{2}\int_{\partial\Sigma^\dast}\alpha.$$

Now $\partial\Sigma^\dast$ consists of two pieces, the arc $\overline{\gamma}$ followed by the arc on $p^{-1}(y)$ from $x_2$ to $x_1$. Since the arc $\overline{\gamma}$ is horizontal, the one-form $\alpha$ is identically zero along it, so we get no contribution to the last integral above. And since the angle along the Hopf great circle $p^{-1}(y)$ measured from $x_1$ to $x_2$ is $\theta$, the integral of $\alpha$ along this arc in the opposite direction is  $-\theta$. 

Putting all this together, we have
$$\text{area of } \Sigma \text{ on } S^2({\textstyle \frac{1}{2}})=  \int_{\Sigma^\dast} \beta \wedge \dualC = -\frac{1}{2}\int_{\partial\Sigma^\dast}\alpha =-\frac{1}{2}(-\theta)=\frac{1}{2}\theta.$$

Hence the holonomy of horizontal transport in $S^3$ induced by the loop $\gamma$ on $S^2(\frac{1}{2})$ is given by the 
$$\text{holonomy angle } \theta = 2\text{ area of } \Sigma \text{ on } S^2({\textstyle \frac{1}{2}}),$$ as claimed above.

\begin{exmp}
The equator $\gamma$ on $S^2(\frac{1}{2})$  bounds a hemisphere $\Sigma$ of area $\frac{\pi}{2}$. The inverse image $p^{-1}(\gamma)$ of $\gamma$ is a Clifford torus in $S^3$, filled with Hopf fibers. The orthogonal trajectories are Hopf fibers of the opposite handedness and are horizontal with respect to the original Hopf fibration. Starting at any location along any original Hopf fiber on this Clifford torus and then following a horizontal circle will bring us back to the antipodal point on the starting fiber. So the holonomy angle in this case is $\theta=\pi$, which is twice the area of $\Sigma$. 
\end{exmp}

\section{Computation of the Lie algebras}\label{appendix1}

In this section, we give an explicit description of the Lie algebras, or equivalently, the tangent spaces at the identity, of the various Fr{\'e}chet Lie groups we consider.

\begin{prop}\label{liealgebras}
The tangent spaces at the identity to our various subgroups of $\Diff(S^3)$ are as follows.
\newline\newline $(a)$ The tangent space $T_{\mathrm{id}}\auth$ consists of vector fields $X = f A + g B + h C$ such that
$$f  = \text{any smooth function on $S^3$},\ \ \ \
\textstyle g = -\frac{1}{2}A h,\ \  \ \ \text{and} \ \ \ \
\textstyle h = \frac{1}{2} A g.$$

\noindent $(b)$ The tangent space $T_{\mathrm{id}}\autxi$ consists of vector fields $X = f A + g B + h C$ such that
$$f  = \text{any smooth function on $S^3$}, \ \ \ \ \
\textstyle g = \frac{1}{2} C f,\ \ \ \ \ \text{and} \ \ \ \ \
\textstyle h = -\frac{1}{2}B f.$$

\noindent $(c)$ The tangent space $T_{\mathrm{id}}\Aut_1(\xi)$ consists of vector fields $X = f A + g B + h C$ such that
$$A f  = 0,\text{ and hence $f$ is constant along fibers of $\HH$},\ \ \ \ \
\textstyle g = \frac{1}{2}C f, \ \ \ \ \ \text{and}\ \ \ \ \
\textstyle h = -\frac{1}{2}B f,$$
and these vector fields are divergence-free.\\

\end{prop}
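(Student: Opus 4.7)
My plan is to translate each of the three geometric preservation conditions into an infinitesimal algebraic equation on the components $f,g,h$ of $X = fA + gB + hC$, and then expand using the structure constants from \eqref{brackets} and the dual-form identities from \eqref{dualforms}.

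For (a), a vector field $X$ lies in $T_{\id}\auth$ if and only if its flow permutes the leaves of the Hopf fibration, equivalently, if and only if the one-dimensional distribution spanned by $A$ is flow-invariant; and this is equivalent to the Lie bracket condition $[X,A] \in \mathrm{span}(A)$. Using the Leibniz rule $[\phi Y, Z] = \phi[Y,Z] - Z(\phi)Y$ with $Z=A$ together with \eqref{brackets}, a direct expansion produces
\[
[X, A] \;=\; -A(f)\,A \;+\; \bigl(2h - A(g)\bigr) B \;-\; \bigl(2g + A(h)\bigr) C.
\]
Forcing the $B$ and $C$ coefficients to vanish gives exactly $h = \tfrac{1}{2}A(g)$ and $g = -\tfrac{1}{2}A(h)$, with $f$ unconstrained, which is (a).

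For (b), the infinitesimal contactomorphism condition is $\mathcal{L}_X\alpha = \lambda\,\alpha$ for some smooth function $\lambda$. By Cartan's magic formula, $\mathcal{L}_X\alpha = d(\iota_X\alpha) + \iota_X d\alpha$, where $\iota_X\alpha = f$ and, via \eqref{dualforms}, $\iota_X d\alpha = -2\iota_X(\beta\wedge\dualC) = 2h\,\beta - 2g\,\dualC$. Expanding $df$ in the $\alpha,\beta,\dualC$ basis yields
\[
\mathcal{L}_X\alpha \;=\; A(f)\,\alpha \;+\; \bigl(B(f) + 2h\bigr)\beta \;+\; \bigl(C(f) - 2g\bigr)\dualC.
\]
Requiring the $\beta$ and $\dualC$ components to vanish gives the formulas $g = \tfrac{1}{2}C(f)$ and $h = -\tfrac{1}{2}B(f)$ of (b), with conformal factor $\lambda = A(f)$ and $f$ free. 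Part (c) is the same computation subject to the additional requirement $\lambda \equiv 0$, which forces $A(f) = 0$, so $f$ is constant along the Hopf fibers.

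Finally, for the divergence-free claim in (c), I would substitute the derived formulas into \eqref{div}: the first term vanishes because $A(f) = 0$, and the remaining two reduce to $\tfrac{1}{2}(BC - CB)(f) = \tfrac{1}{2}[B,C](f) = A(f) = 0$ by \eqref{brackets}, so $\mathrm{div}(X) = 0$. The one step that is not a direct bracket-or-Cartan calculation is the passage from integrated preservation of the Hopf foliation to the Lie bracket condition $[X,A]\in\mathrm{span}(A)$; because the foliation is one-dimensional with $A$ as generator, this amounts to the standard observation that $\phi_t^*A = \rho_t\,A$ for the flow $\phi_t$ of $X$ if and only if $[X,A]$ is proportional to $A$, obtained by differentiating at $t=0$. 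I regard this as the main, but routine, subtlety in the argument.
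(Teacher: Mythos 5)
Your computations are correct and reproduce exactly the equations the paper obtains: the bracket expansion $[X,A]=-(Af)A+(2h-Ag)B-(2g+Ah)C$ for part (a), the conditions $g=\tfrac12 Cf$, $h=-\tfrac12 Bf$ with conformal factor $\lambda=Af$ for parts (b) and (c), and the divergence identity $\mathrm{div}\,X=\tfrac12[B,C]f=Af=0$. Where you differ from the paper is in parts (b) and (c): the paper works dually, differentiating the pairings $\langle\alpha,A\rangle=1$, $\langle\alpha,B\rangle=0$, $\langle\alpha,C\rangle=0$ and computing the three vector-field Lie derivatives $L_XA$, $L_XB$, $L_XC$, then running a separate "conversely" step to recover $L_X\alpha=(Af)\alpha$; your Cartan-formula computation $\mathcal{L}_X\alpha=d(\iota_X\alpha)+\iota_Xd\alpha=A(f)\alpha+(B(f)+2h)\beta+(C(f)-2g)\dualC$ gets the full coframe expansion of $\mathcal{L}_X\alpha$ in one stroke, so necessity and sufficiency fall out simultaneously --- a genuinely tidier route to the same equations. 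The one place you are lighter than the paper is the reduction to the infinitesimal conditions themselves, which the paper isolates as \autoref{liealgebras0}. For the contact cases this characterization is simply cited to \cite{geiges}, so your assertion of $\mathcal{L}_X\alpha=\lambda\alpha$ (resp.\ $=0$) is at the same level of rigor. For the Hopf case, note that $T_{\id}\auth$ consists of velocities of \emph{arbitrary} smooth curves in $\auth$ through the identity, not just flows; the paper handles this by observing that $L_XA$ may be computed along any such curve (giving $[X,A]=\lambda A$ in the forward direction) and then by a separate local-coordinates argument showing that $[X,A]=\lambda A$ forces the flow of $X$ to stay in $\auth$. Your shortcut "$X\in T_{\id}\auth$ iff the flow of $X$ permutes the leaves" quietly assumes the forward half of that equivalence; it is repairable exactly as the paper does it, so I would call it a gloss to be filled in rather than a gap in the method.
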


\vspace{-12pt}

\begin{rem} In view of  \autoref{intersection}, it is natural to ask whether the pair of Lie algebras $T_\id(\auth\cap \Aut(\xi))$ and $T_\id\auth \cap T_\id \Aut(\xi)$ are the same.
The left side is certainly contained in the right side, and we leave it to the reader
to establish the reverse inclusion by manipulating the conditions in parts (a) and (b)
of \autoref{liealgebras}.

\end{rem}

\vspace{-8pt}

We start with an intermediary proposition that gives conditions on the vector fields which are in the tangent spaces of interest. 

\begin{prop}\label{liealgebras0}
The tangent spaces at the identity to our various subgroups of $\Diff(S^3)$ admit the following descriptions.
\begin{enumerate}[label=$(\alph*)$]
\item $T_{\mathrm{id}}\auth = \{X\in VF(S^3) \mid \ L_X A =\lambda A \ \text{ for smooth } \lambda\colon S^3\to \R\}$, \vspace{8pt}
\item $T_{\mathrm{id}}\autxi = \{X\in VF(S^3) \mid \ L_X \alpha=\lambda \alpha \ \text{ for smooth } \lambda\colon S^3\to \R\}$,\vspace{8pt}
\item $T_{\mathrm{id}}\Aut_1(\xi) = \{X\in VF(S^3) \mid \ L_X \alpha=0\}$.
\end{enumerate}
\end{prop}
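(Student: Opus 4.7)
The plan is to prove all three parts uniformly via the standard identification of the tangent space at the identity to a Fr\'echet Lie subgroup $G\subseteq\Diff(S^3)$: a smooth vector field $X$ on $S^3$ lies in $T_\id G$ if and only if it is the initial velocity of some smooth path $F_t\in G$ with $F_0=\id$. Since $S^3$ is compact, every smooth $X$ integrates to a global flow $\phi_t^X$ realizing $X$ in this way, so $X\in T_\id G$ is equivalent to $\phi_t^X\in G$ for all small $t$. In each case the forward direction will come from differentiating the defining equation of $G$ at $t=0$ and using the basic fact $\tfrac{d}{dt}\big|_{t=0}F_t^\ast T = L_X T$ for any smooth path of diffeomorphisms with $F_0=\id$ and velocity $X$, and the backward direction will come from integrating a time-dependent linear ODE for the pulled-back tensor $(\phi_t^X)^\ast T$.

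For part $(a)$, the condition $F_t\in\auth$ with $F_0=\id$ forces $F_t^\ast A=\nu_t\cdot A$ for some smooth nowhere-zero $\nu_t$ with $\nu_0\equiv 1$; differentiating at $t=0$ yields $L_X A=\dot\nu_0\cdot A$, which is precisely the required form $\lambda A$. Conversely, suppose $L_X A=\lambda A$. The standard flow identity
\[
\tfrac{d}{dt}(\phi_t^X)^\ast A \;=\; (\phi_t^X)^\ast L_X A \;=\; \bigl((\phi_t^X)^\ast\lambda\bigr)\cdot(\phi_t^X)^\ast A
\]
is, pointwise in $x\in S^3$, a scalar linear ODE in the vector $Y_t(x):=(\phi_t^X)^\ast A(x)\in T_xS^3$ with initial value $A(x)$. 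Its solution $Y_t=\exp\!\bigl(\int_0^t(\phi_s^X)^\ast\lambda\,ds\bigr)\cdot A$ is always a nonvanishing scalar multiple of $A$, so $\phi_t^X\in\auth$ for every $t$.

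Parts $(b)$ and $(c)$ will follow from the identical argument with $\alpha$ in place of $A$. For $(b)$, $F_t^\ast\alpha=\nu_t\alpha$ differentiates to $L_X\alpha=\dot\nu_0\cdot\alpha$, and conversely $L_X\alpha=\lambda\alpha$ produces the same exponential solution, showing $(\phi_t^X)^\ast\alpha$ remains a nonvanishing scalar multiple of $\alpha$, hence $\phi_t^X\in\autxi$. For $(c)$, the defining equation $F_t^\ast\alpha=\alpha$ differentiates trivially to $L_X\alpha=0$, and conversely $L_X\alpha=0$ collapses the ODE for $(\phi_t^X)^\ast\alpha$ to $\tfrac{d}{dt}(\phi_t^X)^\ast\alpha=0$, so $(\phi_t^X)^\ast\alpha\equiv\alpha$ and $\phi_t^X\in\Aut_1(\xi)$.

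The main conceptual point to verify carefully is that, in the Fr\'echet-Lie category, a tangent vector at the identity of one of these subgroups $G$ can indeed be realized as the initial velocity of a smooth one-parameter family lying in $G$, and that the flow of $X$ provides such a family whenever the infinitesimal Lie-derivative condition holds. Once this is in hand, everything else is the standard \emph{``Lie derivative equals infinitesimal generator of a one-parameter group of tensor transformations''} calculus, combined with elementary pointwise linear ODE theory on the compact manifold $S^3$.
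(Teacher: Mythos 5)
Your proof is correct, and its forward implication is essentially the paper's: the paper likewise emphasizes that $\tfrac{d}{dt}\big|_{t=0}F_t^{\ast}T=L_XT$ holds for \emph{any} smooth curve through the identity with velocity $X$ (not only one-parameter groups), and deduces $L_XA=\lambda A$ from the fact that a curve in $\auth$ satisfies $(F_t)_{\ast}A=\lambda(\cdot,t)\,A\circ F_t$. Where you genuinely differ is the converse and the scope. The paper proves only part (a) itself, and does so in local coordinates $(x,y,\theta)$ adapted to a tubular neighborhood of a Hopf fiber: writing $X=u\,\partial_x+v\,\partial_y+w\,\partial_\theta$, the condition $[X,A]=\lambda A$ forces $u,v$ to be $\theta$-independent, so the flow of $X$ covers a flow on the base and carries vertical circles to vertical circles; parts (b) and (c) are then cited from Geiges. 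You instead run the uniform tensor-flow argument: $\tfrac{d}{dt}(\phi_t^X)^{\ast}T=(\phi_t^X)^{\ast}L_XT$ together with a pointwise scalar linear ODE shows that $(\phi_t^X)^{\ast}A$, resp.\ $(\phi_t^X)^{\ast}\alpha$, remains a positive multiple of $A$, resp.\ of $\alpha$ (and is constant in case (c)), so the flow stays in the subgroup. This buys coordinate-free uniformity --- all three parts, including the two the paper outsources, follow from one computation, which is in fact the standard contact-geometry argument --- while the paper's coordinate picture yields extra geometric information (for instance that $\lambda$ integrates to zero along Hopf fibers, which the paper records and uses afterwards). Two small points: your opening claim that $X\in T_{\id}G$ is \emph{equivalent} to $\phi_t^X\in G$ for small $t$ cannot be assumed at the outset (the curve in $G$ realizing $X$ need not be the flow), but your execution never uses that direction --- you differentiate an arbitrary path in $G$ for the forward implication and invoke the flow only for the converse --- and you should note that $\lambda=\dot\nu_0$ is automatically smooth since $\nu_t(x)$ equals $\langle (F_t^{\ast}A)(x),A(x)\rangle$, resp.\ $(F_t^{\ast}\alpha)(A)(x)$, a smooth function of $(t,x)$.
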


We will see from the proof of \autoref{liealgebras} that the  functions $\lambda\colon S^3\to \R$ appearing in parts (a) and (b) above have the property that they integrate to zero over each Hopf fiber. Furthermore, for any such function $\lambda$, there exists a vector field $X$ on $S^3$ for which $L_X A=\lambda A$, and similarly there exists a vector field $X$ on $S^3$ for which $L_X \alpha = \lambda \alpha$.

We prove part (a) here. Parts (b) and (c) can be found in \cite[Lemma 1.5.8]{geiges}. Before we delve into the proof, we make some remarks about the definition of Lie derivatives. Let $V$ and $W$ be smooth vector fields on the smooth manifold $M$, let $x\in M$ and let $\{ f_t\}$ be the local one-parameter group generated by $V$, meaning that 
\begin{equation}\label{flow} 
f_0=\id \text{ and for each } x\in M \text{ we have } \left. \frac{df_t(x)}{dt}\right|_{t=0}=V(x).\end{equation}
Then, the \emph{Lie derivative} is traditionally defined as
$$(L_V W)(x)=\lim_{t\to 0}\frac{(f_t^{-1})_\dast W(f_t(x))-W(x)}{t}=
\left. \frac{d}{dt}\right|_{t=0}(f_t^{-1})_\dast W(f_t(x))=\left. \frac{d}{dt}\right|_{t=0}(f_{-t})_\dast W(f_t(x)).$$

In this definition, the one-parameter group $\{f_t\}$ of diffeomorphisms provides the service of pulling the tangent vector $W(f_t(x))$ in the tangent space to $M$ at $f_t(x)$ back to a vector in the tangent space to $M$ at $x$, so that one can subtract from it the tangent vector $W(x)$ living there. But it is easy to check that  any smooth curve $f_t\in \Diff(M)$ satisfying \autoref{flow} can be used to define the Lie derivative $L_V W$ as above, and that requiring $\{f_t\}$ to be a one-parameter subgroup is just a convention, but not essential. Of course, when $\{f_t\}$ is not a one-parameter group, the pullback of $W(f_t(x))$ can only be defined to be $(f_t^{-1})_\dast W(f_t(x))$.

\begin{proof}[Proof of \autoref{liealgebras0}(a)]
Let $X$ be a smooth vector field which lies in $T_\id\auth.$ By definition, this means that there is a smooth curve $f_t$ in $\auth$ with $f_0=\id$ and such that $X(x)=\frac{d}{dt}|_{t=0} f_t(x)$ for all $x\in S^3$. Then, as discussed above, the Lie derivative $L_X A$ is defined as
$$(L_X A)(x)=\lim_{t\to 0}\frac{(f_t^{-1})_\dast A(f_t(x))-A(x)}{t}.$$
But note that here $\{f_t\}$ is a path in $\auth$ with $f_0=\id$, so we can write
$$(f_t)_\dast A(x)=\lambda(x,t) A(f_t(x)),$$ since each $f_t$ takes Hopf fibers to Hopf fibers. 
Therefore in the definition of $L_X A(x)$, for any given $t$, both terms in the numerator are multiples of $A(x)$, so we can factor  $A(x)$ out of the limit, and we get that $L_X A=\lambda A$ for a smooth $\lambda\colon S^3\to \R$.

Conversely, suppose $X$ is a smooth vector field on $S^3$ with $L_X A=\lambda A$ for some smooth function $\lambda\colon S^3\to \R$. Let $\{f_t\}$ be the one-parameter group of diffeomorphisms of $S^3$ generated by the vector field $X$, i.e., $f_0=\id$ and for each $x \in S^3$ we have $\left. \frac{df_t(x)}{dt}\right|_{t=0}=X(x).$ Using the group property of this flow, which says that $f_{s+t}(x)=f_s(f_t(x))$, we compute
$$\left. \frac{d}{dt}\right|_t f_t(x)= \left. \frac{d}{ds}\right|_{s=0} f_{s+t}(x)=\left. \frac{d}{ds}\right|_{s=0} f_s(f_t(x))=X(f_t(x)).$$
Thus $\frac{df_t(x)}{dt}=X(f_t(x))$ holds for all $t$ not just $t=0$.

We need to show that the one-parameter group $\{f_t\}$ lies entirely in $\auth$. Let us use local coordinates $(x,y, \theta)$ in a tubular neighborhood of a Hopf fiber, with $(x,y)\in \R^2$ and $\theta\in S^1$ and with $\frac{\partial}{\partial \theta}$ as the unit vector field along the Hopf fibers.


We write the vector field $X$ in local coordinates as $$X=u(x,y,\theta)\frac{\partial}{\partial x}+v(x,y,\theta)\frac{\partial}{\partial y}+w(x,y,\theta)\frac{\partial}{\partial \theta}.$$ Then we can compute the Lie derivative
\begin{eqnarray*}
L_X A &=& [X,A]=- [A, X]=\textstyle -\left[\displaystyle \frac{\partial}{\partial \theta}\, ,\,  X\right]\\
&=& -\left[\frac{\partial}{\partial \theta}\, , \, u\frac{\partial}{\partial x}+v\frac{\partial}{\partial y}+w\frac{\partial}{\partial \theta}\right]\\
&=&  -\frac{\partial u}{\partial \theta} \frac{\partial}{\partial x} -\frac{\partial v}{\partial \theta} \frac{\partial}{\partial y}-\frac{\partial w}{\partial \theta} \frac{\partial}{\partial \theta}\\
&=& \lambda A= \lambda \frac{\partial}{\partial \theta}.
\end{eqnarray*}
From this we see that $\frac{\partial u}{\partial \theta}=0$ and $ \frac{\partial v}{\partial  \theta}=0$, so the functions $u$ and $v$ only depend on $x$ and $y$ and not on $\theta$. We incorporate this by writing
$$X=u(x,y)\frac{\partial}{\partial x}+v(x,y)\frac{\partial}{\partial y}+w(x,y,z)\frac{\partial}{\partial \theta}.$$
We also note from above that $\frac{\partial w}{\partial \theta}=-\lambda$, which integrates to zero around Hopf circles, and hence $$w(x,y,\theta)=w(x,y, \theta+2\pi).$$
Thus locally the flow $\{ f_t\}$ covers a flow on the $xy$-plane and  takes vertical circles to vertical circles, which tells us that each diffeomorphism $f_t$ takes Hopf circles to Hopf circles, and hence $X\in T_\id\auth$, as desired. 
\end{proof}


Now we turn to the proof of \autoref{liealgebras}, and prove each of its parts separately.

\vspace{-9pt}

\begin{proof}[Proof of \autoref{liealgebras}(a)]
Let $X=fA+gB+hC$ be a smooth vector field on $S^3$, written in terms of the orthonormal basis of left-invariant vector fields $A,B$ and $C$ on $S^3$, following the conventions introduced in \autoref{prelims}.  By \autoref{liealgebras0}(a), $X$ lies in $T_\id \auth$ if and only if $L_X A =\lambda A$ for some smooth real-valued function $\lambda$ on $S^3$. We compute $L_XA$ to see what constraints this conditions imposes on the coefficients $f,g$ and $h$. 

Notationally, we switch from Lie derivatives to Lie brackets and compute
\begin{eqnarray*}
L_X A &=& [X, A]=[fA+gB+hC, A]=[fA,A]+[gB, A]+[hC, A]\\
&=&-[A, fA]-[A, gB]-[A,hC]\\
&=&-(Af)A-f[A,A]-(Ag)B-g[A,B]-(Ah)C-h[A,C]\\
&=&-(Af)A-(Ag)B-g(2C)-(Ah)C-h(-2B)\\
&=&-(Af)A+(2h-Ag)B-(2g+Ah)C,
\end{eqnarray*}
\noindent using the bracket relations from \autoref{brackets}.

Therefore, $X\in T_\id \auth$ if and only if $-Af=\lambda$ for some smooth real-valued function $\lambda$, and $2h-Ag=0$ and $2g+Ah=0$. This completes the proof of \autoref{liealgebras}(a).

\end{proof}

Note in this proof that since $\lambda=-Af$ is the negative of the directional derivative of the coefficient $f$ around a Hopf circle, we see why $\lambda$ must integrate to zero around the Hopf fibers.

\begin{proof}[Proof of \autoref{liealgebras}(b)] Again, let $X=fA+gB+hC$ be a smooth vector field on $S^3$. By \autoref{liealgebras0}(b), $X$ lies in $T_\id\autxi$ if and only if $L_X \alpha=\lambda \alpha$ for some smooth $\lambda$. 

 Suppose $X$  lies in $T_\id \autxi$ so that $L_X \alpha=\lambda \alpha$ for some $\lambda$. Rewrite $\alpha(A)=1$ as $\langle  \alpha, A\rangle=1$ and then differentiate to get 
$$0=L_X\langle \alpha, A\rangle = \langle L_X \alpha, A\rangle +\langle \alpha, L_X A\rangle=\langle \lambda \alpha, A\rangle +\langle \alpha, L_X A\rangle.$$
Thus $$\langle \alpha, L_X A \rangle= -\langle \lambda \alpha, A\rangle =-\lambda\langle \alpha, A\rangle=-\lambda.$$
Using the computation for $L_X A$ from part (a), we get $$\langle \alpha, L_X A\rangle=-Af,$$ thus $Af=\lambda.$ 

Analogously to the computation of $L_X A$ in part (a), we can compute 
\begin{eqnarray*}
L_X B &=& -(Bf+2h)A-(Bg)B+(2f-Bh)C\\
L_X C &=& (-Cf+2g)A-(2f+Cg)B-(Ch)C.
\end{eqnarray*}

Proceeding as before with rewriting the equations $\alpha(B)=0$ and $\alpha(C)=0$  as $\langle \alpha, B\rangle =0$ and $\langle \alpha, C\rangle= 0$, and differentiating, we get
\begin{equation}\label{lxbc}\langle \alpha , L_X B\rangle =0 \ \ \ \text{and}\ \ \ \langle \alpha, L_X C\rangle=0.\end{equation}
Combining with the computations of $L_XB$ and $L_X C$ above, we get that
$$\textstyle h=-\frac{1}{2} Bf\ \ \ \ \text{and}\ \ \ \ g=\frac{1}{2}Cf,$$ as desired. 

Conversely, assuming the coefficients of $X$ satisfy the conditions in \autoref{liealgebras}(b), using the computations of $L_X A$, $L_X B$ and $L_X C$, and working backwards from the computations of  the differentiation of the brackets we get 
$$\langle L_X \alpha, A\rangle=Af, \ \ \ \langle L_X \alpha, B\rangle=0 \ \ \ \text{and} \ \ \ \langle L_X \alpha, C\rangle=0,$$ so $L_X\alpha =(Af) \alpha =\lambda \alpha.$

\end{proof}

\vspace{-15pt}
Note again that $\lambda$ is the directional derivative of the coefficient $f$ around Hopf circles, so we reaffirm the observation made after part (a) that $\lambda$ must integrate to zero around Hopf fibers.


\begin{proof}[Proof of \autoref{liealgebras}(c)] 

Let $X=fA+gB+hC$ be a smooth vector field on $S^3$. By \autoref{liealgebras0}(c), $X$ lies in $T_\id\Aut_1(\xi)$ if and only if $L_X \alpha=0$. 

 Suppose $X$  lies in $T_\id \Aut_1(\xi)$ so that $L_X \alpha=0$. Just as in \autoref{liealgebras}(b), rewriting $\alpha(A)=1$ as $\langle  \alpha, A\rangle=1$ and then differentiating, we get 
$$0=L_X\langle \alpha, A\rangle = \langle L_X \alpha, A\rangle +\langle \alpha, L_X A\rangle=\langle \alpha, L_X A\rangle.$$

But again, by the computation for $L_X A$ from part (a), we have $\langle \alpha, L_X A\rangle=-Af,$ thus $Af=0.$ Just as in part (b), combining the computations for $L_X B$ and $L_X C$ from part (b) with \autoref{lxbc}, we get $$\textstyle h=-\frac{1}{2} Bf\ \ \ \ \text{and}\ \ \ \ g=\frac{1}{2}Cf,$$ as desired.

%
%

Conversely, if we assume that the conditions in \autoref{liealgebras}(c) hold, as we saw in the proof of (b), we get that $L_X\alpha=(Af)\alpha$. Thus if $Af=0$, we immediately get $L_X\alpha=0$, so by \autoref{liealgebras0}(c), $X$ lies in $T_\id \Aut_1(\xi)$.

Lastly, we check that any $X\in T_\id \Aut_1(\xi)$ is divergence free. We have
\begin{eqnarray*}
\mathrm{div} X&=& \textstyle Af+Bg+Ch=0+B(\frac{1}{2}Cf)+C(-\frac{1}{2}Bf)\\
&=&  \textstyle \frac{1}{2}(BC-CB)f=\frac{1}{2}[B,C]f=\frac{1}{2} (2A)f=Af=0.
\end{eqnarray*}
\end{proof}

\begin{rem}
The conditions on the coefficients of $X=fA+gB+hC$ in \autoref{liealgebras} may seem mysterious at first glance, and it is a rewarding exercise to try to decode their geometric meaning. We give some hints. In part (a),  you can take the
conditions on the coefficients $g$ and $h$ and differentiate again in the $A$-direction
to show that as the flow of $A$ moves a Hopf fiber off itself, it assumes a coiling shape so as to approximate a nearby Hopf fiber. In part (b), another approach to describing $T_\id \Aut(\xi)$ is to observe that a vector field $X$ is in this space if and only if $L_X B$ and $L_X C$ both lie in the 2-plane spanned by $B$ and $C$, and then compute with Lie brackets.
\end{rem}


Having given in \autoref{liealgebras} a description of the tangent space at the identity to our various subgroups of $\Diff(S^3)$, we end \autoref{appendix1} now with a similar description of the tangent spaces $T_\id\Diff(S^2)$ and  $T_\id \SDiff(S^2)$. 

We can  doubly appreciate our ability to write vector fields on $S^3$ in terms of left-invariant vector fields $A, B, $ and $C$ when we turn to $S^2$ and seek a similar description there. But we can use the Hopf projection $p\colon S^3\to S^2$ to uniquely lift smooth vector fields on $S^2$ to smooth horizontal fields on $S^3$, that is, vector fields which are orthogonal to the Hopf fibers and, by virtue of lifting from $S^2$, twist around each Hopf fiber so they lie in $T_\id \auth$. This allows us to think of $T_\id \Diff(S^2)$ and $T_\id \SDiff(S^2)$ as subspaces of $T_\id \auth$, and therefore rely on expressions in terms of $A,B$ and $C$ to describe the vector fields therein. With this identification in mind, we prove the following proposition.

\begin{prop}\label{TSDiff} The tangent spaces  $T_{\mathrm{id}}\Diff(S^2)$ and  $T_{\mathrm{id}}\SDiff(S^2)$ have the following descriptions.
\newline\newline $(a)$ The tangent space $T_{\mathrm{id}}\Diff(S^2)$ consists of vector fields $X = f A + g B + h C$ such that
$$ f = 0, \ \ \ \ \
\textstyle g = -\frac{1}{2}A h, \ \ \ \ \ \text{and}\ \ \ \ \
\textstyle h = \frac{1}{2}A g. $$

\noindent $(b)$ The tangent space $T_{\mathrm{id}}\SDiff(S^2)$ consists of vector fields $X = f A + g B + h C$ such that
$$ f = 0, \ \ \ \ \
\textstyle g = -\frac{1}{2}A h, \ \ \ \ \
\textstyle h = \frac{1}{2}A g, \ \ \ \ \text{and}\ \ \ \ \
Bg+Ch =0.$$
\end{prop}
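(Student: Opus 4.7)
The plan is to identify each smooth vector field $\bar X$ on $S^2$ with its unique horizontal lift $X$ on $S^3$ under the Hopf projection $p$: horizontality, meaning $\alpha(X) = 0$, is precisely the condition $f = 0$ in the expansion $X = fA + gB + hC$.

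For part (a), the key observation is that $\alpha$ is invariant under the flow of $A$ (from $\alpha(A) = 1$ and $d\alpha = -2\beta\wedge\dualC$, Cartan's formula gives $L_A\alpha = 0$), so the horizontal distribution is $S^1$-invariant, and hence the horizontal lift $X$ of any $\bar X$ is itself $S^1$-invariant: $[A,X] = 0$, equivalently $L_X A = 0$. Combining $f = 0$ and $L_X A = 0$ with the formula $L_X A = -(Af)A + (2h-Ag)B - (2g+Ah)C$ computed in the proof of \autoref{liealgebras}(a) reduces the remaining content to $g = -\tfrac12 Ah$ and $h = \tfrac12 Ag$. Conversely, any $X = gB + hC$ satisfying these relations has $L_X A = 0$ and so descends to a smooth vector field on $S^2$ whose horizontal lift is $X$, giving the stated description of $T_\id\Diff(S^2)$.

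For part (b), the extra requirement is that $\bar X$ be divergence-free on $S^2$. By \autoref{div} we have $\mathrm{div}_{S^3}(X) = Af + Bg + Ch = Bg + Ch$, so it suffices to show that $\mathrm{div}_{S^3}(X) = p^\dast(\mathrm{div}_{S^2}\bar X)$; then $\bar X \in T_\id\SDiff(S^2)$ precisely when $Bg + Ch \equiv 0$. Normalizing as in \autoref{quantitativeholonomy} so that $p\colon S^3 \to S^2(\tfrac12)$ is a Riemannian submersion, one has $\beta\wedge\dualC = p^\dast\omega$ for the area form $\omega$ on $S^2(\tfrac12)$; since $X$ is $p$-related to $\bar X$, naturality of the Lie derivative yields
\begin{equation*}
L_X(p^\dast\omega) \;=\; p^\dast(L_{\bar X}\omega) \;=\; p^\dast(\mathrm{div}\,\bar X)\cdot p^\dast\omega.
\end{equation*}

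The main technical step, and the principal obstacle, is to verify directly, via Cartan's formula and the structure equations $d\beta = -2\dualC\wedge\alpha$ and $d\dualC = -2\alpha\wedge\beta$ from \autoref{dualforms}, that $L_X(\beta\wedge\dualC) = (Bg + Ch)\,\beta\wedge\dualC$. The part (a) conditions $f = 0$, $Ag = 2h$, $Ah = -2g$ are precisely what cancels the $\alpha$-contributions in $L_X\beta$ and $L_X\dualC$, leaving a clean multiple of $\beta\wedge\dualC$ after wedging. This is a direct bookkeeping computation rather than a conceptual one. Matching with the displayed equation identifies $Bg + Ch$ with $p^\dast(\mathrm{div}\,\bar X)$, and the description of $T_\id\SDiff(S^2)$ in part (b) follows.
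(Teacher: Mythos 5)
Your proof is correct, and it splits naturally: part (a) is essentially the paper's own argument, while part (b) takes a genuinely different route. For (a) you, like the paper, characterize horizontal lifts by $f=0$ together with invariance under the Hopf $S^1$-action ($L_AX=0$), and extract $g=-\tfrac12 Ah$, $h=\tfrac12 Ag$ from the bracket computation in \autoref{liealgebras}(a); the only step worth spelling out is the passage from ``the horizontal distribution is $S^1$-invariant'' to ``the horizontal lift is $S^1$-invariant,'' which uses that $\mathrm{rot}_\theta$ covers the identity on $S^2$ together with uniqueness of horizontal lifts. For (b) the paper integrates: it passes to the one-parameter groups generated by $X$ and $\bar X$, notes that the lifted flow moves Hopf fibers rigidly, and concludes that it is volume-preserving on $S^3$ exactly when the flow below is area-preserving on $S^2$. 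You instead stay infinitesimal, using $\beta\wedge\dualC=p^\dast\omega$ (as in \autoref{quantitativeholonomy}) and naturality of the Lie derivative for $p$-related fields, reducing everything to the identity $L_X(\beta\wedge\dualC)=(Bg+Ch)\,\beta\wedge\dualC$. That identity, which you deferred as bookkeeping, does check out: Cartan's formula with \autoref{dualforms} gives $L_X\beta=(Ag-2h)\alpha+(Bg)\beta+(Cg)\dualC$ and $L_X\dualC=(Ah+2g)\alpha+(Bh)\beta+(Ch)\dualC$, so the part (a) relations $Ag=2h$, $Ah=-2g$ kill the $\alpha$-terms and wedging leaves exactly $(Bg+Ch)\,\beta\wedge\dualC$. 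Your route buys the sharper pointwise identity $\mathrm{div}_{S^3}X=p^\dast(\mathrm{div}_{S^2}\bar X)$, not merely the equivalence of the two vanishing conditions, and avoids invoking the flows altogether; the paper's flow argument is softer, requires no structure-equation computation, and runs parallel to the group-level statement of \autoref{areapres} that is reused elsewhere in the paper.
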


\begin{proof}
For part (a), note that we know from \autoref{liealgebras}(a) that the tangent space $T_\id \auth$ consists of  vector fields $X=fA+gB+hC$   such that $f$ is any smooth function on $S^3$, $g = -\frac{1}{2}A h$, and $h = \frac{1}{2} A g.$ If $X$ is horizontal, then $f=0$. Thus the conditions in part (a) are certainly necessary for $X$ to be the horizontal lift of a vector field in $T_\id \Diff(S^2)$.

Conversely, suppose that a vector field $X$ on $S^3$ satisfies the conditions in part (a), and since $f=0$, write $X=gB+hC$. 

We claim that the horizontal vector field $X=gB+hC$ is the lift of a vector field $X'$ on $S^2$ if and only if $L_A X=0$. To see this, note that the left-invariant vector field $A$ on $S^3$ is the infinitesimal generator of the one parameter subgroup of $\auth$ consisting of diffeomorphisms  of $S^3$, $\mathrm{rot}_\theta\colon x\mapsto xe^{i\theta}$ for $0\leq \theta \leq 2\pi$, which uniformly rotate all Hopf fibers by the same amount. Then the horizontal vector field $X$ is the lift of a vector field on $S^2$ if and only if $(\mathrm{rot}_\theta)_\dast  X(x)=X(xe^{i\theta})$, which is equivalent to $L_A X=0$.

From our computation of $L_A X$ in the proof of \autoref{liealgebras}(a), and setting $f=0$, we have
$$L_A X= (Ag-2h)B+(Ah+2g)C,$$ which is equal to $0$ by our conditions in part (a). Thus $X$ is the lift of a vector field on $S^2$. So the stated conditions are both necessary and  sufficient for $X$ to lie in $T_\id \Diff(S^2)$.

For part (b) of our current proposition, it is easy to check that $T_\id \SDiff(S^2)$ consists of all divergence-free vector fields on $S^2$. We claim that  a vector field $X'\in T_\id \Diff(S^2)$ is divergence-free if and only if its horizontal lift to $X=gB+hC\in T_\id \auth$ is divergence-free, which in turn is equivalent to the condition that $Bg+Ch$=0. This will show that the extra condition in part (b) is both necessary and sufficient for a vector field $X$ from part (a) to actually lie in the subspace $T_\id \SDiff(S^2)$.

To prove the claim, let $\{\varphi'_t\}$ be the one-parameter group of diffeomorphisms of $S^2$ generated by the vector field $X'$, and let $\{\varphi_t\}$ be their lifts to a one-parameter group of diffeomorphisms of $S^3$ generated by the lifted vector field $X$.  

If we assume that the lifted field $X$ is divergence-free, then the diffeomorphisms $\varphi_t$ are volume-preserving on $S^3$. Moreover, since $X$ is orthogonal to the Hopf fibers, the diffeomorphisms $\varphi_t$ take Hopf fibers rigidly to Hopf fibers. It then follows that the diffeomorphisms $\varphi'_t$ must be area-preserving on $S^2$ and their generating vector field $X'$ must be divergence-free on $S^2$. 

Conversely, if we assume that the vector field $X'$ on $S^2$ is divergence-free, it follows that the diffeomorphisms $\varphi'_t$ are area-preserving there. Then, since the horizontally lifted vector field $X$ on $S^3$ is the infinitesimal generator of a one-parameter subgroup of diffeomorphisms   $\varphi_t$ of $S^3$ which take Hopf fibers rigidly to one another, and which cover the area-preserving diffeomorphisms  $\varphi'_t$ of $S^2$, the diffeomorphisms  $\varphi_t$ must be volume-preserving on $S^3$ and hence the vector field $X$ must be divergence-free there. 

This proves the claim, and completes the proof of \autoref{TSDiff}.

\end{proof}

\section{The exact sequence of Lie algebras}\label{lieexactsec}

In this section, we establish the exactness of the following sequence on the level of Lie algebras. We note that this result also appears in \cite{ratiu_schmid}, where Ratiu and Schmid attribute it to \cite{konstant}, but give their own proof. We give our own version of a proof here, building on our explicit computation from the previous section.

\begin{prop}\label{Lieseq}
The sequence of tangent spaces
$$0\to  T_\id S^1 \xrightarrow{\TJ} T_\id \Aut_1(\xi) \xrightarrow{\TP} T_\id \SDiff(S^2)\to 0$$
is an exact sequence of Lie algebras.
\end{prop}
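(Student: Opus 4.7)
The plan is to unravel $\TJ$ and $\TP$ using the explicit descriptions from \autoref{liealgebras}(c) and \autoref{TSDiff}(b), and then check the three exactness conditions together with the Lie algebra homomorphism property. The map $\TJ$ sends a constant $c \in T_\id S^1 \cong \R$ to the Reeb multiple $cA$. The map $\TP$ is the derivative at the identity of the Hopf projection $P \colon \Aut_1(\xi) \to \SDiff(S^2)$; in coordinates, $\TP(fA + gB + hC) = gB + hC$, identified with its descent to $S^2$. I would first verify that $\TP$ really lands in $T_\id \SDiff(S^2)$: a short computation with the brackets \autoref{brackets}, combined with $Af = 0$, gives $ABf = 2Cf$ and $ACf = -2Bf$, which translate into $g = -\tfrac{1}{2}Ah$ and $h = \tfrac{1}{2}Ag$; and the final condition $Bg + Ch = \tfrac{1}{2}[B,C]f = Af = 0$ follows for free.

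Injectivity of $\TJ$ is immediate, and $\TP \circ \TJ = 0$ since $A$ has no horizontal component. For the kernel of $\TP$: if $g = h = 0$, then $Bf = Cf = 0$ by the defining relations in \autoref{liealgebras}(c), and together with $Af = 0$ this forces $f$ to be constant, so $X = cA \in \mathrm{im}(\TJ)$.

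The main obstacle is surjectivity of $\TP$. Given $Y = gB + hC \in T_\id \SDiff(S^2)$, I need to produce $f \colon S^3 \to \R$ with $Af = 0$, $Bf = -2h$, $Cf = 2g$; equivalently, $df = \eta$ where $\eta := -2h\beta + 2g\dualC$. The plan is to compute $d\eta$ using the Leibniz rule and the structure equations \autoref{dualforms}. After collecting terms, the three wedge coefficients of $d\eta$ come out to be $-2(Ah + 2g)$ on $\alpha \wedge \beta$, $\ 2(Ag - 2h)$ on $\alpha \wedge \dualC$, and $2(Bg + Ch)$ on $\beta \wedge \dualC$, each of which vanishes by the very conditions defining $T_\id \SDiff(S^2)$ in \autoref{TSDiff}(b). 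Hence $\eta$ is closed, and since $H^1(S^3) = 0$ it is exact, producing the desired $f$ (unique up to an additive constant, matching the one-dimensional $\TJ$-image).

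Finally, both $\TJ$ and $\TP$ are Lie algebra homomorphisms because they are the derivatives at the identity of the smooth Fr\'echet Lie group homomorphisms $S^1 \hookrightarrow \Aut_1(\xi)$ and $P \colon \Aut_1(\xi) \to \SDiff(S^2)$; for $\TJ$ this is automatic since the image is a one-dimensional central subalgebra (the brackets $[cA, c'A]$ vanish), and for $\TP$ it reflects the standard fact that pushforward of vector fields under a smooth submersion intertwines Lie brackets. Thus the sequence is exact as claimed, with the closed-form computation of $d\eta$ being the only nontrivial step.
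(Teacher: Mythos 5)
Your proposal is correct and follows essentially the same route as the paper: the same coordinate descriptions of the three tangent spaces, the same bracket manipulations for the inclusion $\TP(T_\id\Aut_1(\xi))\subseteq T_\id\SDiff(S^2)$ and for $\ker \TP=\mathrm{im}\,\TJ$, and the same construction of $f$ for surjectivity. The only difference is cosmetic: where you verify $d\eta=0$ for $\eta=-2h\beta+2g\dualC$ and invoke $H^1(S^3)=0$, the paper performs the dual computation, showing $\mathrm{curl}\bigl((-Ag)B+(-Ah)C\bigr)=0$ via \autoref{curl} and solving $\mathrm{grad}(f)=-(Ag)B-(Ah)C$, which is the same integrability argument in vector-calculus language.
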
 


Before turning to the proof, we give explicit descriptions of the tangent spaces in the sequence, which are computed in detail in \autoref{appendix1}. Writing a smooth vector field on $S^3$ as $X=fA+gB+hC$ as in \autoref{prelims}, the conditions on the coefficients $f,g$ and $h$, which describe membership in the tangent spaces in question are as follows:
\begin{enumerate}
\item $X\in T_\id S^1$ if and only if
$$ f  = \text{constant},  \ \ \ g = 0,\ \ \ h = 0,$$
\item $X\in T_\id \Aut_1(\xi)$ if and only if
$$ \textstyle A f  = 0,\ \ \ g = \frac{1}{2} C f ,\ \ \ h = -\frac{1}{2}B f.$$
\end{enumerate}

We view $T_\id \SDiff(S^2)$ as horizontal vector fields on $S^3$, which push forward consistently along Hopf fibers to divergence-free vector fields on $S^2$, where by ``consistently'' we mean that $p_\dast(X)|_x=p_\dast(X)|_y$ for all $x,y$ in the same Hopf fiber. With this interpretation, we get the following description.

\begin{enumerate}
\setcounter{enumi}{2}
\item $X\in T_\id \SDiff(S^2)$ if and only if
$$\textstyle  f  = 0,\ \ \ g = -\frac{1}{2} Ah, \ \ \  h = \frac{1}{2} Ag,\ \ \ Bg+Ch = 0.$$
\end{enumerate}


It is easy to see (1), whereas (2) is proved as part (c) of \autoref{liealgebras} and (3) is \autoref{TSDiff}.

\begin{proof}[Proof of \autoref{Lieseq}]
We start by showing that the maps $\TJ$ and $\TP$ do restrict to maps between tangent spaces. First, in order for $fA\in T_\id S^1$, $f$ must be constant, so we have $\TJ(fA)=fA\in T_\id \Aut_1(\xi)$. 

For $fA+gB+hC\in T_\id\Aut_1(\xi)$, we have $\TP(fA+gB+hC)=0A+gB+hC$. To show that this lives in $T_\id \SDiff(S^2)$, we need to verify that if $f,g,$ and $h$ satisfy the conditions in (2), then $g,h$ satisfy the conditions in (3). Using the description of $g$ and $h$ from (2), note that the condition $g=-\frac{1}{2}Ah$ is equivalent to $2Cf=ABf.$ This equality can be seen to be true using the bracket formula $2C=AB-BA$ and the fact that $Af$ is also assumed to be 0. In a similar fashion, we can show that $h=\frac{1}{2} Ag$. Lastly, again using the description of $g$ and $h$ from (2), we get that $Bg+Ch=\frac{1}{2}(BC-CB)f=Af=0$.

Now we turn to  exactness of the sequence. The map $\TJ$ is injective, so we have exactness at $T_\id S^1$. To see exactness at $T_\id \Aut_1(\xi)$, first note that by definition it follows immediately that $\mathrm{im}(\TJ)\subseteq \mathrm{ker}(\TP)$. To see the reverse inclusion, suppose $X=fA+gB+hC$ and suppose $\TP(X)=gB+hC=0$. Then $g=\frac{1}{2} Cf=0$ and $h=-\frac{1}{2} Bf=0$. But then $Af=\frac{1}{2}(BC-CB)f=0.$ Thus $f$ is constant on $S^3$, and $X=fA\in \mathrm{im}(\TJ)$.

Lastly, to verify exactness at $T_\id \SDiff(S^2)$ we need to check that $\TP$ is surjective. Suppose that $Y=gB+hC\in T_\id \SDiff(S^2)$, so the coefficients satisfy the conditions in (3). We need to find a smooth function $f\colon S^3\to \R$ such that the vector field $X=fA+gB+hC$ lies in $T_\id \Aut_1(\xi)$, i.e., so that $f,g$ and $h$ satisfy the equations in (2). Combining the conditions on $f,g$ and $h$ from (2) and (3), we have $Af=0$, $Bf=-Ag$ and $Cf=-Ah$.

Plugging this into the gradient formula from \autoref{grad}, we are seeking $f$ so that
$$\mathrm{grad}(f) =(Af)A+(Bf)B+(Cf)C=-(Ag)B-(Ah)C.$$

On $S^3$ we can solve for $f$ if and only if $\mathrm{curl} \big((-Ag)B+(-Ah)C\big)=0$. From \autoref{curl}, after simplifying, we get
$$\mathrm{curl}  \big((-Ag)B+(-Ah)C\big) = (CAg-BAh)A+(2Ag+A^2h)B+(2Ah-A^2g)C.$$
Differentiating the equations $g=-\frac{1}{2}Ah$ and $h=\frac{1}{2}Ag$ with respect to $A$, we get $A^2h=-2Ag$ and $A^2g=2Ah$, thus our equation reduces to
$$\mathrm{curl}  \big((-Ag)B+(-Ah)C\big)= (CAg-BAh)A.$$
Furthermore, using the equations $Ag=2h$, $Ah=-2g$ and $Ch+Bg=0$, we conclude that $\mathrm{curl}  \big((-Ag)B+(-Ah)C\big)=0$, and thus $(-Ag)B+(-Ah)C=\mathrm{grad}(f)$ for some smooth function $f\colon S^3\to \R$, as desired. This completes the proof of the proposition, namely that our sequence of tangent spaces is exact.
\end{proof}

\vspace{20pt}
\section{The exact sequence of Fr{\'e}chet Lie groups}\label{groupexactsec}
%

\noindent In this section we establish the exactness of the sequence on the level of Lie groups. More precisely, we give an independent proof of the following theorem, originally due to Banyaga \cite{banyaga1, banyaga2}, Souriau \cite{souriau} and Kostant \cite{konstant}.

\vspace{10pt}
\begin{thm}\label{thm_exactness_groups}
	The sequence of Fr\'echet Lie groups 
	\begin{equation}\label{eq_exactness_groups}
	\{1\} \rightarrow S^1 \overset{\TJ}\longrightarrow \mathrm{Aut}_1(\xi) \overset{P}{\longrightarrow} \SDiff(S^2) \rightarrow \{1\} 
	\end{equation}
	is exact.
\end{thm}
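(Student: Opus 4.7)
The plan is to verify the three exactness claims in turn: injectivity of $\TJ$, exactness at $\Aut_1(\xi)$, and surjectivity of $P$, with the main work concentrated in the surjectivity step.

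The injectivity of $\TJ$ is immediate: it is the inclusion of the subgroup of uniform Hopf rotations into $\Aut_1(\xi)$. For exactness at $\Aut_1(\xi)$, the containment $\mathrm{im}(\TJ)\subseteq \ker(P)$ is clear from the definition of $P$. Conversely, suppose $F\in\Aut_1(\xi)$ satisfies $P(F)=\id$. Then $F$ preserves each Hopf fiber, and since $\Aut_1(\xi)\subseteq \Aut_1(\HH)$ by \autoref{rigidHopf} we have $F_\dast A=A$, forcing $F$ to act as a rigid rotation on each fiber. Because $S^2$ is simply connected, the fiberwise rotation angle lifts to a smooth function $\bar\phi\colon S^2\to\R$, and we may write $F(x)=x\cdot e^{i\bar\phi(p(x))}$. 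A short computation using $\alpha(A)=1$ and the fact that uniform Hopf rotations are themselves strict contactomorphisms reduces the condition $F^\dast\alpha=\alpha$ to $d\bar\phi\equiv 0$. Hence $\bar\phi$ is constant and $F\in\mathrm{im}(\TJ)$.

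For the surjectivity of $P$, the plan is to avoid any appeal to the bundle structure (established independently in \autoref{bundlesec}) and instead work at the level of generating vector fields. Given $f\in\SDiff(S^2)$, use the connectedness of $\SDiff(S^2)$ (a consequence of Wang's deformation retraction onto $SO(3)$) to pick a smooth isotopy $\{f_t\}$ with $f_0=\id$ and $f_1=f$, and let $v_t\in T_\id\SDiff(S^2)$ denote its time-dependent generating vector field. By the exactness of the Lie algebra sequence \autoref{Lieseq}, each $v_t=g_tB+h_tC$ lifts to a vector field $V_t=f_tA+g_tB+h_tC\in T_\id\Aut_1(\xi)$, where $f_t$ solves the gradient equation produced in the proof of \autoref{Lieseq}; the normalization $\int_{S^3}f_t\,d\mathrm{vol}=0$ fixes the $S^1$-ambiguity and makes $V_t$ depend smoothly on $t$. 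Since $S^3$ is compact, integrating the nonautonomous ODE $\frac{d}{dt}F_t=V_t\circ F_t$ with $F_0=\id$ yields a smooth path $F_t\in\Diff(S^3)$. Because $L_{V_t}\alpha=0$ by \autoref{liealgebras0}(c), one has $\frac{d}{dt}F_t^\dast\alpha=F_t^\dast L_{V_t}\alpha=0$, so $F_t^\dast\alpha=\alpha$ and hence $F_t\in\Aut_1(\xi)$ for all $t$. Finally, $P_\dast V_t=v_t$ implies that $t\mapsto P(F_t)$ satisfies the same ODE as $f_t$ with the same initial condition, so $P(F_t)=f_t$, and $F:=F_1$ is the desired lift of $f$.

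The main obstacle is ensuring the lift $V_t$ is smooth in $t$ while simultaneously encoding $L_{V_t}\alpha=0$. The smoothness hinges on the smooth solvability of the scalar equation for $f_t$ appearing in the proof of \autoref{Lieseq}; the curl condition verified there is exactly what makes this solvable, and standard elliptic regularity together with the mean-zero normalization provides smooth $t$-dependence. The vanishing Lie derivative is built into the definition of $T_\id\Aut_1(\xi)$ via \autoref{liealgebras0}(c). Together these two properties deliver the flow $F_t$, its membership in $\Aut_1(\xi)$, and the correct projection, which is all that the surjectivity of $P$ requires.
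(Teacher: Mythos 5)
Your proposal is correct, but it reaches the result by a genuinely different route than the paper, at both places where real work is needed. For exactness at $\Aut_1(\xi)$, the paper argues geometrically: it shows the fiberwise rotation angle is locally constant by transporting two points of one fiber along horizontal lifts of a geodesic and using that a strict contactomorphism carries horizontal curves to horizontal curves (nearest neighbor maps). You instead make a gauge-theoretic computation: writing $F(x)=x\,e^{i\bar\phi(p(x))}$ and reducing $F^\dast\alpha=\alpha$ to $p^\dast d\bar\phi=0$; this is a clean algebraic shortcut, at the mild cost of justifying smoothness of the angle function and the lift of $\bar\phi$ over the simply connected base. For surjectivity of $P$, the paper constructs the lift $F$ of $f$ explicitly by horizontal transport, $F=H_{f\gamma}\circ F_0\circ H_\gamma^{-1}$, proving path-independence via the quantitative holonomy identity (holonomy angle equals twice the enclosed area) and area-preservation of $f$, and then verifies smoothness separately in \autoref{appendix_frechet}; this construction is reused almost verbatim to build the local sections in \autoref{fiberbundle}. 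You instead integrate: choose a smooth isotopy from $\id$ to $f$, lift its generating field $v_t$ to $V_t\in T_\id\Aut_1(\xi)$ using the Lie-algebra surjectivity from \autoref{Lieseq} (with a mean-zero normalization to kill the $S^1$ ambiguity), and flow, using $\frac{d}{dt}F_t^\dast\alpha=F_t^\dast L_{V_t}\alpha=0$ and uniqueness of flows to get $F_t\in\Aut_1(\xi)$ with $P(F_t)=f_t$. Your approach makes smoothness of the lift essentially automatic and avoids the holonomy argument entirely, but it buys this by importing two inputs the paper's proof does not need: smooth path-connectedness of $\SDiff(S^2)$ (you cite Wang's retraction; classical results of Smale or Moser would also do, but it is an extra ingredient) and the already-established infinitesimal exactness together with smooth $t$-dependence of the potential solving $\mathrm{grad}\,\phi_t=-(Ag_t)B-(Ah_t)C$ — for the latter, ``elliptic regularity'' is more than is needed, since on simply connected $S^3$ the potential is given by explicit path integration of a curl-free field, which makes joint smoothness in $(t,x)$ transparent. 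Also note a harmless notational clash: you use $f_t$ both for the isotopy in $\SDiff(S^2)$ and for the $A$-coefficient of $V_t$. Finally, be aware that your construction does not produce the explicit local slices over $\SDiff(S^2)$ that the paper's proof of surjectivity later recycles in the proof of the bundle structure, so if you intended your argument to feed into \autoref{fiberbundle} you would still need something like the paper's horizontal-transport construction there.
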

The $S^1$ subgroup in the above exact sequence is the set of diffeomorphisms which rotate the Hopf fibers within themselves by the same angle. The projection  $P \colon \mathrm{Aut}_1(\xi) \rightarrow \SDiff(S^2)$ starts with a diffeomorphism $F$ in $\mathrm{Aut}_1(\xi)$ and then records the resulting permutation of the Hopf fibers. We can write
\begin{equation}
P(F)(y) = p \circ F \circ p^{-1}(y)
\end{equation}
where $p\colon S^3 \rightarrow S^2$ is the Hopf map.

The proof of  \autoref{thm_exactness_groups} is broken down into two lemmas, corresponding to the  two main challenges: proving that the kernel of $P$ is no larger than the subgroup $S^1$, and proving that the map $P$ is onto $\SDiff(S^2)$. The map from $S^1$ into $\mathrm{Aut}_1(\xi)$ is just the inclusion, so exactness there is automatic.

\begin{lem}
	The sequence from \autoref{eq_exactness_groups} is exact at $\mathrm{Aut}_1(\xi)$.
\end{lem}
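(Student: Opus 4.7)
The plan is to show that $\ker P$ coincides with $\mathrm{im}(\TJ)$, the subgroup of uniform rotations of Hopf fibers. The inclusion $\mathrm{im}(\TJ)\subseteq \ker P$ is immediate from the definitions: a diffeomorphism that rotates every Hopf fiber within itself by a fixed angle descends to the identity on $S^2$. The content is the reverse inclusion, and my plan is to prove it by a holonomy argument.

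First I would unpack what membership in $\ker P$ tells us about an element $F \in \Aut_1(\xi)$. Since $P(F) = \id_{S^2}$, the map $F$ preserves each Hopf fiber setwise. Combined with \autoref{rigidHopf}, which says $\Aut_1(\xi)\subseteq \Aut_1(\HH)$, the restriction of $F$ to any fiber $p^{-1}(y)$ is a rigid rotation of that circle by some angle $\theta(y)$. Smoothness of $F$ makes $\theta\colon S^2 \to S^1$ smooth, so in quaternionic notation $F(x)= x\cdot e^{i\theta(p(x))}$ for every $x\in S^3$. The whole problem is to show that $\theta$ is constant; once that is known, $F\in \TJ(S^1)$ and exactness at $\Aut_1(\xi)$ follows.

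For the constancy, the key point is that $F$, being a contactomorphism of $\xi=\ker\alpha$, preserves the horizontal distribution, so it sends horizontal curves to horizontal curves. Given any smooth $\gamma\colon[0,1]\to S^2$ from $y_1$ to $y_2$, pick the horizontal lift $\overline{\gamma}$ with $\overline{\gamma}(0)=x_1\in p^{-1}(y_1)$ and endpoint $x_2\in p^{-1}(y_2)$. Because $P(F)=\id$ and $F$ preserves horizontality, the curve $F\circ\overline{\gamma}$ is again a horizontal lift of $\gamma$, with endpoints $x_1e^{i\theta(y_1)}$ and $x_2e^{i\theta(y_2)}$. On the other hand, the $S^1$-equivariance of horizontal lifts (the $S^1$-action by uniform rotation carries horizontal paths to horizontal paths, since $A$ lies in the kernel of $\alpha$ as a Reeb-translation symmetry) tells us that the unique horizontal lift starting at $x_1e^{i\theta(y_1)}$ must end at $x_2e^{i\theta(y_1)}$. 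Matching endpoints forces $\theta(y_1)=\theta(y_2)$, and connectedness of $S^2$ gives that $\theta$ is a constant $\theta_0\in S^1$.

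The main obstacle, and really the only substantive step, is verifying the $S^1$-equivariance of horizontal lifts cleanly --- that rotating the starting point of a horizontal lift by a fixed $e^{i\theta_0}$ rotates the entire lift rigidly. This follows from the fact that the $S^1$-action is generated by the Reeb field $A$, which is tangent to the fibers and hence lies in the annihilator of the connection one-form $\alpha$; the flow of $A$ therefore carries $\ker\alpha$ to itself and commutes with horizontal lifting. As an alternative route, one could bypass holonomy entirely and compute $F_\dast B$ and $F_\dast C$ directly in the quaternionic frame, read off $\alpha(F_\dast B) = B(\theta\circ p)$ and $\alpha(F_\dast C) = C(\theta\circ p)$, and then use $F^\dast\alpha=\alpha$ together with $A(\theta\circ p)=0$ to conclude that $\theta\circ p$ is killed by $A$, $B$, and $C$ and hence constant; I prefer the holonomy argument for conceptual cleanliness, but the quaternionic calculation would serve as a sanity check.
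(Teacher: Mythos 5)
Your argument is correct and is essentially the paper's own proof: both rest on the fact that $F^\dast\alpha=\alpha$ forces $F$ to carry horizontal lifts of a path $\gamma$ to horizontal lifts, and then compare endpoints to see that the fiberwise rotation angle is the same at both ends (the paper packages this comparison via nearest-neighbor isometries along geodesic arcs, you via uniqueness of lifts plus $S^1$-equivariance). One local correction: $A$ does \emph{not} lie in the kernel (or annihilator) of $\alpha$ --- by definition $\alpha(A)=1$; the equivariance you need follows instead from $L_A\alpha = \iota_A d\alpha + d(\alpha(A)) = 0$, i.e.\ the uniform rotations $x\mapsto xe^{i\theta}$ are themselves strict contactomorphisms (indeed isometries), hence preserve horizontality and commute with horizontal lifting.
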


\begin{proof}
	The map $P$ takes the subgroup $S^1$ of $\mathrm{Aut}_1(\xi)$ to the identity of $\SDiff(S^2)$, because the elements of this subgroup just rotate the fibers within themselves, and so induce the identity map of $S^2$ to itself. Thus, to confirm exactness at $\mathrm{Aut}_1(\xi)$, the challenge is to show that the kernel of $P$ is no larger than this subgroup.
	
	\begin{figure}[h!]
\begin{center}
	\includegraphics[scale=0.45]{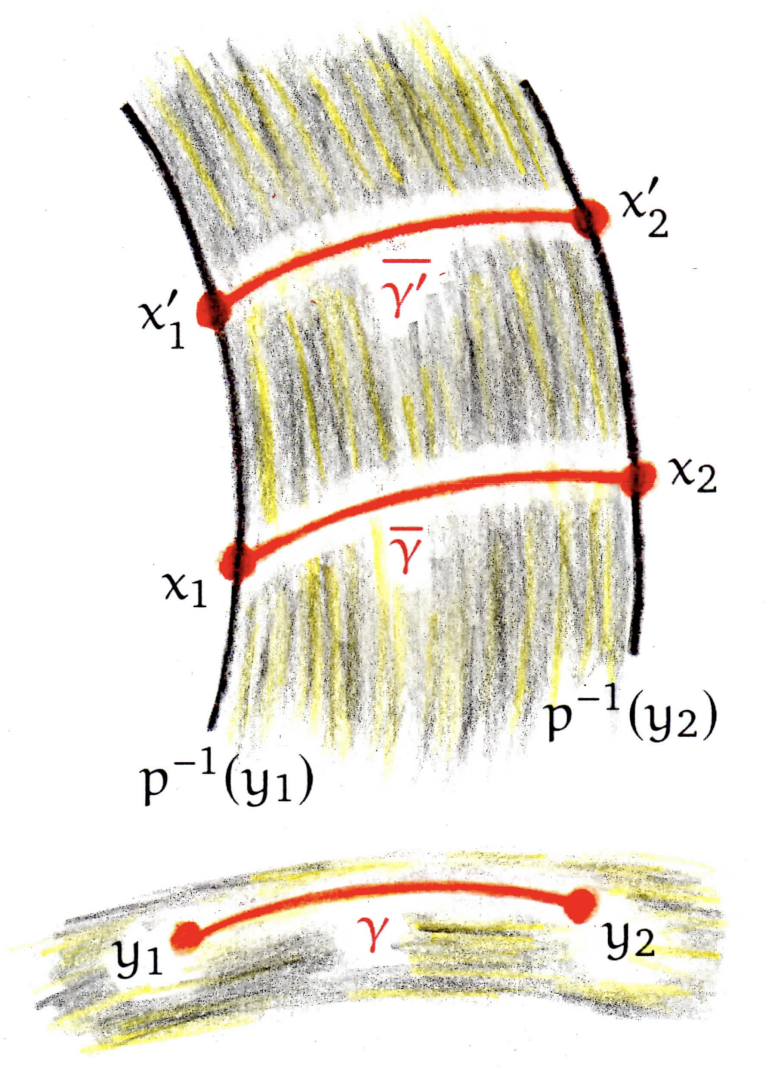}\caption{Exactness at $\Aut_1(\xi)$}\label{exactaut1pic}
\end{center}
\end{figure}
	
	We start with an element $F \in \mathrm{Aut}_1(\xi)$ which takes each Hopf fiber rigidly to itself, and show that it rotates each fiber within itself by the same amount.
	
	We consider two Hopf fibers $p^{-1}(y_1)$ and $p^{-1}(y_2)$, and connect the points $y_1$ and $y_2$ of $S^2$ by a geodesic arc $\gamma$ there. We can assume these points $y_1$ and $y_2$ are not antipodal, since we only need to show that the
amount each Hopf fiber is rotated by $F$ is locally constant. With this choice, the geodesic
arc $\gamma$ connecting $y_1$ and $y_2$ is unique, and we have a well-defined nearest neighbor map
between $p^{-1}(y_1)$ and $p^{-1}(y_2)$.
	
	Then we choose two points $x_1$ and $x_1'$ on the fiber $p^{-1}(y_1)$, and consider the two horizontal lifts $\overline{\gamma}$ and $\overline{\gamma}'$  of $\gamma$ which begin at $x_1$ and $x_1'$. These horizontal lifts are geodesics in $S^3$, and they end on the fiber $p^{-1}(y_2)$ at the points $x_2$ and $x_2'$ which are the nearest neighbors there to the points $x_1$ and $x_1'$, respectively, on $p^{-1}(y_1)$.
		
	Since the nearest neighbor map from $p^{-1}(y_1)$ to $p^{-1}(y_2)$ is an isometry between  Hopf fibers, the angle $\theta$ between $x_1$ and $x_1'$ on the first fiber is the same as the angle $\theta$ between $x_2$ and $x_2'$ on the second fiber.
	
	Now given $x_1 \in p^{-1}(y_1)$, we choose $x_1'$ to be $F(x_1)$. Since $F$ is a contactomorphism, it permutes the contact tangent $2$-planes $\xi = \ker(\alpha)$ among themselves, and so in particular takes horizontal curves to horizontal curves in $S^3$.
	
	It follows that $F(\overline{\gamma}) = \overline{\gamma}'$, and in particular $F(x_2) = x_2'$. This means that the angle $\theta$ between the points $x_1$ and $x_1' = F(x_1)$ on the Hopf fiber $p^{-1}(y_1)$ is the same as the angle $\theta$ between the points $x_2$ and $x_2' = F(x_2)$ on the Hopf fiber $p^{-1}(y_2)$. Thus, $F$ rotates all fibers by the same amount, which means that $F \in S^1$, which is what we wanted to prove. This confirms exactness of our sequence of Fr{\'e}chet Lie groups at $\Aut_1(\xi)$.
\end{proof}

We  turn now to exactness at $\SDiff(S^2)$, following the approach introduced by Ratiu and Schmid in \cite{ratiu_schmid}. Given the Hopf projection $p \colon S^3 \rightarrow S^2$ and a path $\gamma$ in $S^2$, we denote by
\begin{equation}\label{eq_horizontal_transport}
H_{\gamma} \colon p^{-1}(\gamma(0)) \rightarrow p^{-1}(\gamma(1))
\end{equation}
the \emph{horizontal transport} along $\gamma$, in which each point of the first fiber moves along the horizontal lift of $\gamma$ to a point on the second fiber, as introduced in \autoref{horizontal}. This rigid motion between great circle fibers is the continuous analog of our nearest neighbor maps. Recall from \autoref{horizontal} that if the path $\gamma$ in $S^2$ is a geodesic arc, then the map in \autoref{eq_horizontal_transport} is precisely the nearest neighbor map between these two Hopf fibers.

Recall that the subgroup $\Aut_1(\HH)$ of \emph{strict automorphisms of $\HH$} is the subgroup of $\auth$ permuting Hopf fibers rigidly,
$$\Aut_1(\HH)=\{F\in\Diff(S^3)\ \mid \  F_\dast A = A \}.$$
The following lemma characterizes the strict automorphisms of the Hopf fibration which commute with horizontal transport.

\begin{lem}\label{lemma_aut_1}
	Let $F \in \mathrm{Aut}_1(\HH)$ induce $f \in \SDiff(S^2)$ through $f(y) = p \circ F \circ p^{-1}(y)$. Then $F \in \mathrm{Aut}_1(\xi)$ if and only if
	\begin{equation}
	F \circ H_{\gamma} = H_{f  \gamma} \circ F
	\end{equation}
	for all smooth curves $\gamma$ in $S^2$.
\end{lem}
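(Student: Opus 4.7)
The plan is to treat the condition $F \circ H_\gamma = H_{f\gamma}\circ F$ as a statement about $F$ preserving horizontal curves, and then convert that infinitesimally into the statement that $F_\dast$ preserves the contact distribution $\xi=\ker\alpha$. Once we have both $F\in\Aut_1(\HH)\subseteq\auth$ and $F\in\Aut(\xi)$, \autoref{intersection} delivers $F\in\Aut_1(\xi)$ for free, so the whole argument reduces to trading one preservation property for another.

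For the forward direction, suppose $F\in\Aut_1(\xi)$, so $F^\dast\alpha=\alpha$. Then $F_\dast$ maps $\xi_x=\ker\alpha_x$ isomorphically onto $\xi_{F(x)}$, and therefore sends horizontal curves in $S^3$ (those tangent to $\xi$ at every point) to horizontal curves. Given a smooth $\gamma$ in $S^2$ and a starting point $x\in p^{-1}(\gamma(0))$, let $\overline{\gamma}$ be the horizontal lift of $\gamma$ with $\overline{\gamma}(0)=x$. Then $F\circ\overline{\gamma}$ is horizontal, starts at $F(x)$, and covers $p\circ F\circ\overline{\gamma}=f\circ\gamma$ because $F\in\Aut_1(\HH)$ descends to $f$. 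By the uniqueness of horizontal lifts, $F\circ\overline{\gamma}$ is the horizontal lift of $f\circ\gamma$ starting at $F(x)$. Evaluating at the endpoint yields $F(H_\gamma(x))=H_{f\gamma}(F(x))$.

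For the converse, assume the intertwining relation. I want to show $F_\dast(\xi)=\xi$; given $F\in\Aut_1(\HH)\subseteq\auth$, \autoref{intersection} will then finish the proof. Fix $x\in S^3$ and a horizontal tangent vector $v\in\xi_x$. Choose a smooth curve $\gamma$ in $S^2$ with $\gamma(0)=p(x)$ and $\dot\gamma(0)=p_\dast v$; because $p$ is a Riemannian submersion on the horizontal distribution, its horizontal lift $\overline{\gamma}$ starting at $x$ satisfies $\dot{\overline{\gamma}}(0)=v$. The intertwining relation $F\circ H_s\gamma = H_{f\circ(s\gamma)}\circ F$ (applied with the truncations $s\gamma(t)=\gamma(st)$ for $s\in[0,1]$, or more directly to the moving-endpoint family) shows that $t\mapsto F(\overline{\gamma}(t))$ coincides with the horizontal lift of $f\circ\gamma$ starting at $F(x)$. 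In particular the curve $F\circ\overline{\gamma}$ is horizontal, so its tangent vector at $t=0$, namely $F_\dast v$, lies in $\xi_{F(x)}$. Since $v\in\xi_x$ was arbitrary and $F$ is a diffeomorphism, $F_\dast(\xi)=\xi$, i.e.\ $F\in\Aut(\xi)$. Combined with $F\in\auth$, \autoref{intersection} gives $F\in\Aut_1(\xi)$.

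The main potential obstacle is a bookkeeping issue rather than a conceptual one: making sure that every horizontal vector at $x$ is genuinely realized as the initial tangent of a horizontally lifted curve, and that the intertwining condition, which a priori is stated only at the endpoints $t=0,1$, actually forces $F\circ\overline{\gamma}$ to agree with $H_{f\gamma}\circ F$ at every intermediate time. Both points are easy once one applies the condition to the reparametrized family of curves $\gamma_s(t)=\gamma(st)$, since then the endpoint equality at each $s$ gives pointwise agreement along the whole lift. With that observation in hand, the two directions are genuinely symmetric instances of ``strict contactomorphism'' and ``preserves horizontal lifts'' being the same condition.
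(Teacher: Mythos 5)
Your proposal is correct and follows essentially the same route as the paper: strict contactomorphisms preserve horizontal curves, uniqueness of horizontal lifts gives the intertwining relation, and conversely the relation shows $dF$ carries $\xi_x$ to $\xi_{F(x)}$, after which $F \in \mathrm{Aut}_1(\HH) \cap \Aut(\xi) = \Aut_1(\xi)$ via \autoref{intersection}. Your explicit use of the truncated curves $\gamma_s(t)=\gamma(st)$ to upgrade the endpoint identity to agreement along the whole lift is a detail the paper leaves implicit, but it is the same argument.
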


\begin{figure}[h!]
\begin{center}
	\includegraphics[scale=0.45]{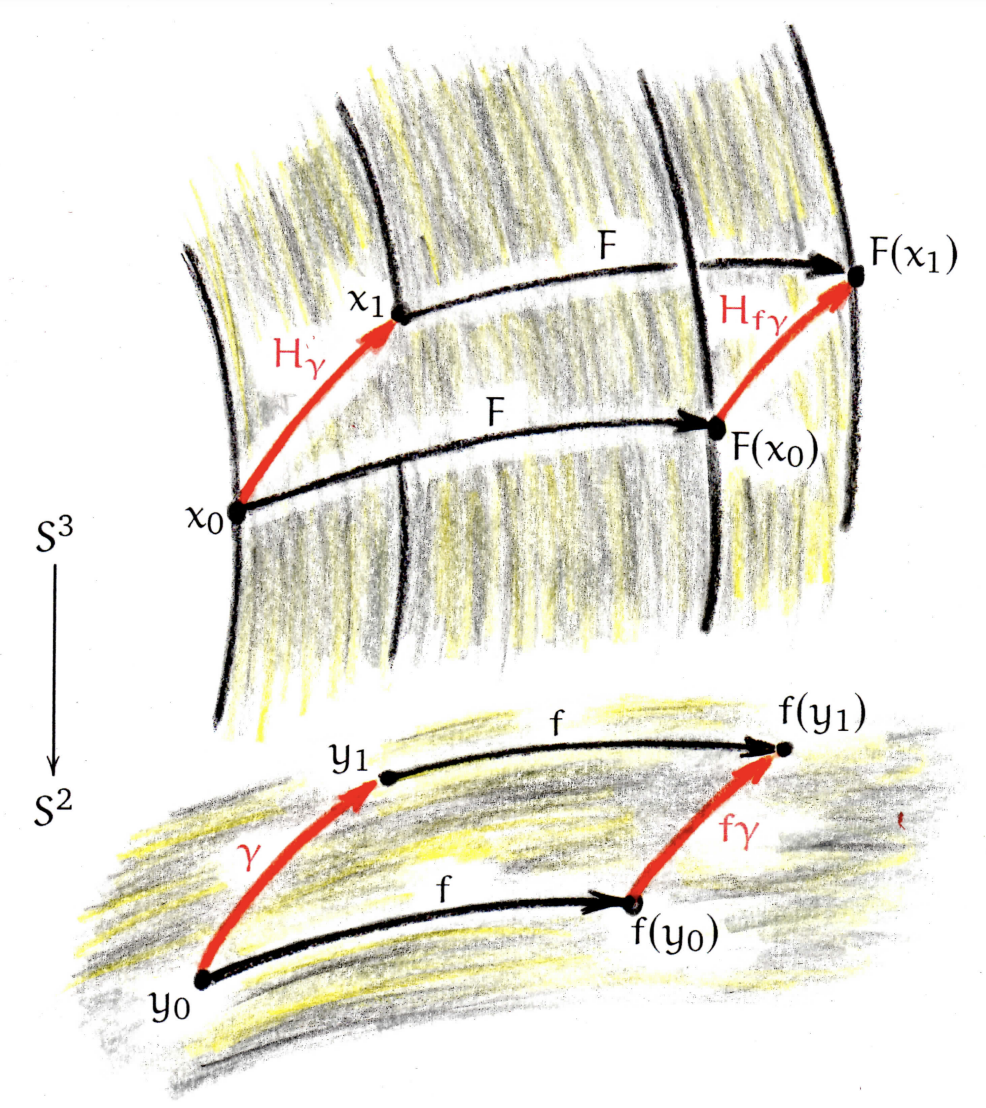}\caption{Horizontal transport}\label{transportpic}
\end{center}
\end{figure}

\vspace{10pt}

\begin{proof}
	If $F \in \mathrm{Aut}_1(\xi)$, then $F$ takes horizontal curves in $S^3$ to horizontal curves. In particular, in \autoref{transportpic}, $F$ takes the horizontal curve labeled $H_{\gamma}$, which runs from $x_0$ to $x_1$, to the horizontal curve labeled $H_{f\gamma}$, which runs from $F(x_0)$ to $F(x_1)$. Thus, $F \circ H_{\gamma} = H_{f\gamma} \circ F$.
	
	Conversely, suppose that $F \circ H_{\gamma} = H_{f\gamma} \circ F$ for all smooth curves $\gamma$ in $S^2$. Then given any point $x \in S^3$, choose two horizontal curves through $x$ whose tangent vectors at $x$ span the tangent $2$-plane $\xi_x$. Since $F$ takes horizontal curves in $S^3$ to horizontal curves, its differential $dF(x)$ must take $\xi_x$ to $\xi_{F(x)}$, which means $F \in \mathrm{Aut}(\xi)$. Since we started out with $F \in \mathrm{Aut}_1(\HH)$, we have $F \in \mathrm{Aut}_1(\HH) \cap \mathrm{Aut}(\xi) = \mathrm{Aut}_1(\xi)$. 
\end{proof}

\vspace{5pt}

\begin{lem}\label{lemma_5_8}
	The sequence of Fr{\'e}chet Lie groups 
	\begin{equation*}
	\{1\} \rightarrow S^1 \rightarrow \mathrm{Aut}_1(\xi) \overset{P}{\longrightarrow} \SDiff(S^2) \rightarrow \{1\} 
	\end{equation*}
	from \autoref{eq_exactness_groups} is exact at $\SDiff(S^2)$. That is, the map $P\colon \Aut_1(\xi)\to \SDiff(S^2)$ is onto.
\end{lem}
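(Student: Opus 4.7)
The plan is to construct, for each $f \in \SDiff(S^2)$, a strict contactomorphism $F \in \Aut_1(\xi)$ with $P(F) = f$. By \autoref{lemma_aut_1}, it suffices to build $F \in \Aut_1(\HH)$ covering $f$ that commutes with horizontal transport, i.e.\ $F \circ H_\gamma = H_{f\gamma} \circ F$ for every smooth curve $\gamma$ in $S^2$. The construction will follow the Ratiu--Schmid strategy, using horizontal transport to pick out $F$ from the data of $f$ together with a single choice of point on one fiber, which will account for the $S^1$ worth of possible lifts.

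The construction proceeds as follows. Fix a base point $y_0 \in S^2$ and a point $x_0 \in p^{-1}(y_0)$, and pick any $\widetilde{x}_0 \in p^{-1}(f(y_0))$. Given any $x \in S^3$, set $y = p(x)$, connect $y_0$ to $y$ by a smooth path $\gamma$ in $S^2$, and write
$$x = R_\theta\bigl(H_\gamma(x_0)\bigr),$$
where $R_\theta$ denotes rotation of the Hopf fiber $p^{-1}(y)$ through angle $\theta$. Then define
$$F(x) \;:=\; R_\theta\bigl(H_{f\gamma}(\widetilde{x}_0)\bigr).$$
By construction $F$ sends Hopf fibers rigidly to Hopf fibers, and if it is well-defined and smooth, then it lies in $\Aut_1(\HH)$ and covers $f$. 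Moreover, the formula $F \circ H_{\gamma'} = H_{f\gamma'} \circ F$ holds essentially on the nose, since we may concatenate $\gamma'$ with a path from $y_0$ to its starting endpoint and appeal to the functoriality of horizontal transport. Invoking \autoref{lemma_aut_1}, $F$ lies in $\Aut_1(\xi)$.

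The main obstacle is to show that $F(x)$ is independent of the chosen path $\gamma$ from $y_0$ to $y$. Given two such paths $\gamma_1$ and $\gamma_2$, the loop $\eta := \gamma_1 \cdot \gamma_2^{-1}$ bounds a region $\Sigma$ in $S^2$, and the two candidate values for $\theta$ differ by the holonomy angle of $\eta$; similarly, the two candidate values of $F(x)$ differ by a rotation of the fiber $p^{-1}(f(y))$ through the holonomy angle of the loop $f\eta = f\gamma_1 \cdot (f\gamma_2)^{-1}$, which bounds $f(\Sigma)$. By the quantitative holonomy formula of \autoref{quantitativeholonomy}, these two holonomy angles equal $2\,\mathrm{area}(\Sigma)$ and $2\,\mathrm{area}(f(\Sigma))$ respectively (after the standard rescaling of the base). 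Since $f \in \SDiff(S^2)$ is area-preserving, these are equal, so the two holonomy contributions cancel and $F(x)$ is unambiguously defined. This is the crux where the area-preserving hypothesis enters.

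It remains to verify smoothness, which is local. On any simply connected open set $U \subset S^2$ we can choose a smooth family of paths $\gamma_y$ from $y_0$ (or from a nearby auxiliary basepoint) to $y \in U$ depending smoothly on $y$. Since $H_{\gamma_y}$ and $H_{f\gamma_y}$ depend smoothly on their paths and endpoints, and the fiber angle $\theta$ depends smoothly on $x$, the formula for $F$ is smooth on $p^{-1}(U)$. By the well-definedness just established, these local formulas agree on overlaps and assemble into a globally smooth map $F\colon S^3 \to S^3$. The same construction applied to $f^{-1}$ and the obvious initial datum produces a two-sided inverse for $F$, so $F$ is a diffeomorphism. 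This gives the desired element of $\Aut_1(\xi)$ with $P(F) = f$, proving surjectivity.
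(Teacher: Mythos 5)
Your proposal is correct and follows essentially the same route as the paper: the Ratiu--Schmid construction $F = H_{f\gamma}\circ F_0\circ H_\gamma^{-1}$ (your rotational extension of $x_0\mapsto \widetilde{x}_0$ is exactly the rigid motion $F_0$), well-definedness via the quantitative holonomy formula plus area-preservation of $f$, strictness via \autoref{lemma_aut_1}, and local smooth path families for smoothness. The only minor divergence is at the end, where you obtain bijectivity by running the construction for $f^{-1}$ to build an explicit inverse, whereas the paper notes $dF$ has trivial kernel and invokes the covering-map/simple-connectivity argument; both are fine.
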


\begin{proof}
	We start out with a diffeomorphism $f \in \SDiff(S^2)$, which we want to lift to an automorphism $F \in \mathrm{Aut}_1(S^3)$.
	
		\begin{figure}[h!]
	\begin{center}
		\includegraphics[scale=0.41]{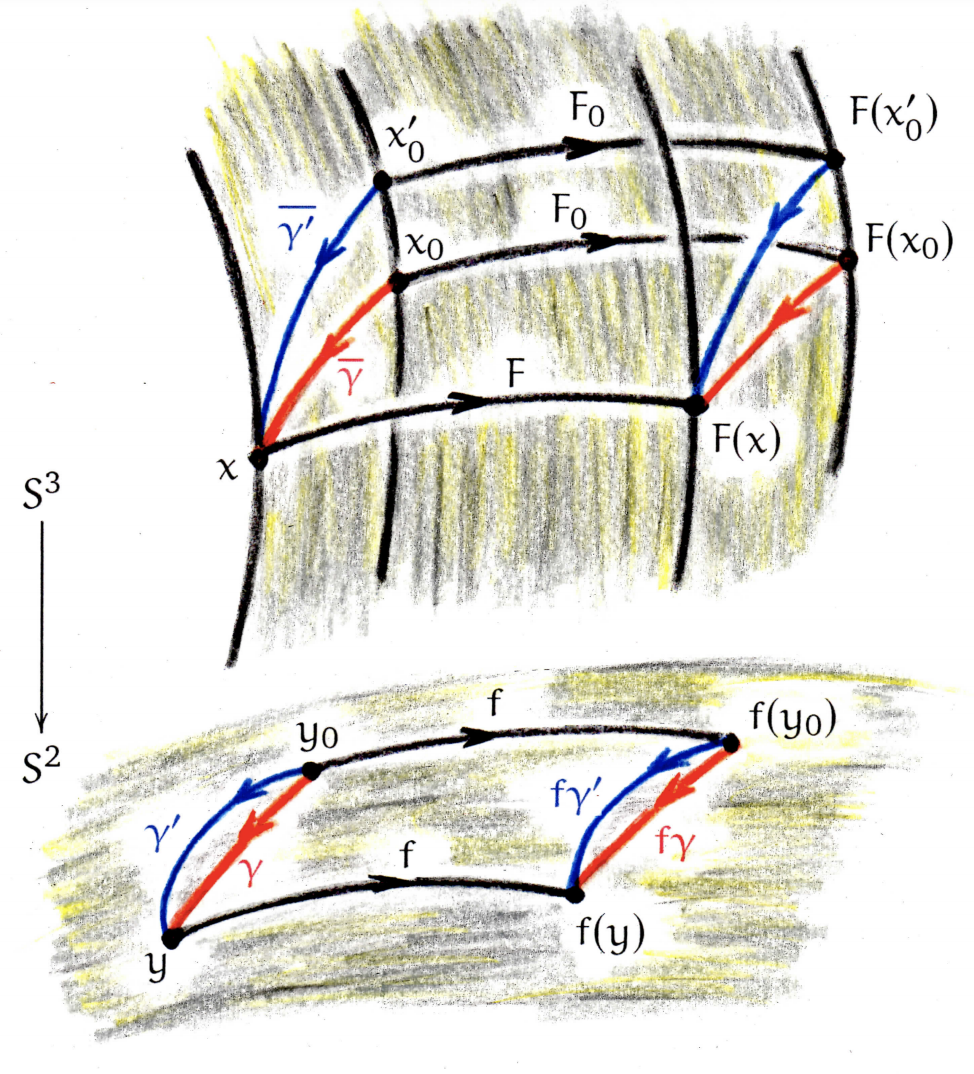}\caption{Path lifting}\label{pathliftpic}
	\end{center}
	\end{figure}
	
	We fix a point $y_0 \in S^2$ to serve as our base point throughout the proof and then begin the definition of the diffeomorphism $F$ of $S^3$ by requiring that it  take the Hopf fiber $p^{-1}(y_0)$ rigidly to the Hopf fiber $p^{-1}(f(y_0))$ in an orientation-preserving but otherwise arbitrary way. We let
	\begin{equation}
	F_0 \colon p^{-1}(y_0) \rightarrow p^{-1}(f(y_0))
	\end{equation}
	be this map, which is determined up to a rigid rotation.

	Next, consider an arbitrary point $x \in S^3$ and its projection $y = p(x)$ in $S^2$. We connect $y_0$ and $y$ with an arbitrary smooth path $\gamma$ in $S^2$, so that $\gamma(0) = y_0$ and $\gamma(1) = y$, and let $\overline{\gamma}$ denote its unique horizontal lift to a path in $S^3$ which \emph{ends} at $x$, meaning $\overline{\gamma}(1) = x$, as in \autoref{pathliftpic}.
	
Let $x_0 = \overline{\gamma}(0)$ be the beginning point of this lifted path, so that $x_0$ lies somewhere on the Hopf fiber $p^{-1}(y_0)$. In the notation of horizontal transport, we can write $x_0 = H_{\gamma}^{-1}(x)$. The diffeomorphism $F_0$ has already been defined on this ``base'' Hopf fiber, so we know the point $F_0(x_0)$.

Now consider the smooth path $f(\gamma)$ in $S^2$, which runs from $f(y_0)$ to $f(y)$. The unique horizontal lift of this path which begins at $F_0(x_0)$ is shown in the figure. Horizontal transport in $S^3$ along this horizontal lift takes the point $F_0(x_0)$ to the point that we will define to be $F(x)$, that is,
\begin{equation}\label{eq_definition_F}
F(x) = H_{f\gamma} \circ F_0 \circ H_{\gamma}^{-1}(x)
\end{equation}
We will show that the definition of $F$ does not depend on the choice of the path $\gamma$ from $y_0$ to $y$ in $S^2$, and this will follow from the fact that the diffeomorphism $f$ of $S^2$ is area-preserving. To that end, let $\gamma'$ be another smooth path in $S^2$ from $y_0$ to $y$, shown in \autoref{pathliftpic}. 


We must show that
\begin{equation}\label{eq_F_welldefined}
H_{f\gamma'} \circ F_0 \circ H_{\gamma'}^{-1} = H_{f\gamma} \circ F_0 \circ H_{\gamma}^{-1}
\end{equation}
Consider the loop $\sigma = \gamma (\gamma')^{-1}$ in $S^2$ based at $y_0$ that runs through $\gamma$ and then $\gamma'$ backwards. The image under $f$ of this loop is the loop $f\sigma = (f\gamma)(f\gamma')^{-1}$ based at $f(y_0)$. Then a little transposing of terms in \autoref{eq_F_welldefined} gives us
\begin{equation}\label{eq_h_sigma}
 F_0\circ H_{\sigma}   =  H_{f\sigma}\circ F_0
\end{equation}
Since $f$ is area-preserving, the areas enclosed by the loops $\sigma$ and $f\sigma$ are the same. Hence, by the results of \autoref{quantitativeholonomy}, the holonomy experienced by the horizontal lifts of these loops are equal, and preserved by the rigid motion $F_0$ between the fibers. This confirms  \autoref{eq_h_sigma}, and hence that $F$ does not depend on the choice of the path $\gamma$ in $S^2$ running from $y_0$ to $y$. A different choice of basepoint $y_0^\dast$ in $S^2$ in this construction would result in a new map $F^\dast$ which differs from $F$ by a uniform rotation on all Hopf fibers.

We note that by construction $F$ covers $f$, i.e., $P \circ F = f \circ P$. \autoref{eq_definition_F}, which defines $F$, together with  \autoref{lemma_aut_1} show that $F$ is in $\mathrm{Aut}_1(\xi)$. Since $F$ takes Hopf fibers rigidly to Hopf fibers and covers the diffeomorphism $f$, its differential $dF(x)$ at each point $x \in S^3$  cannot have a nontrivial kernel. Hence $F$ is a submersion from $S^3$ to itself, thus a covering map, and since $S^3$ is simply connected, $F$ is a diffeomorphism. We leave the proof of smoothness of $F$ for \autoref{appendix_frechet}.
\end{proof}

This concludes the proof of exactness of the sequence of Fr\'echet Lie groups stated in  \autoref{thm_exactness_groups}.

\section{The fiber bundle structure}\label{bundlesec}


\noindent The goal of this section is to give an independent proof of  the following theorem, originally due to Vizman \cite{vizman}.

\begin{thm}\label{fiberbundle}
The sequence
\begin{equation}\label{eq_fiber_bundle}
S^1 \hookrightarrow \mathrm{Aut}_1(\xi) \overset{P}{\longrightarrow} \SDiff(S^2)
\end{equation}
is a fiber bundle in the Fr\'echet category. 
\end{thm}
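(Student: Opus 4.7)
The plan is as follows. By \autoref{thm_exactness_groups}, the map $P\colon \Aut_1(\xi) \to \SDiff(S^2)$ is a surjective smooth homomorphism of Fr\'echet Lie groups with kernel the central $S^1$, so the bundle structure will follow once I produce a single smooth local section of $P$ on an open neighborhood $U$ of the identity in $\SDiff(S^2)$. Given such a section, all other local trivializations arise by group translation: for $F_1 \in \Aut_1(\xi)$ lying over $f_1 = P(F_1)$, the assignment $g \mapsto F_1 \cdot \sigma(f_1^{-1} g)$ is a smooth local section on $f_1 \cdot U$, and the transition functions on overlaps are automatically smooth $S^1$-valued maps assembled from smooth group operations on $\Aut_1(\xi)$.

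To construct $\sigma$, I would use the formula from the proof of \autoref{lemma_5_8}. Fix a basepoint $y_0 \in S^2$, and let $U$ consist of those $f \in \SDiff(S^2)$ for which $f(y_0)$ is not antipodal to $y_0$. For such $f$, the fibers $p^{-1}(y_0)$ and $p^{-1}(f(y_0))$ have a well-defined nearest-neighbor map $F_0^f$, given by an explicit smooth formula in the endpoints. I would then make a fixed smooth choice of auxiliary path $\gamma_y$ from $y_0$ to each $y \in S^2$ (the unique minimizing geodesic on the dense open set where this is defined, adjusted near the cut locus by a partition of unity so that the family is globally smooth in $y$), and define
\[
\sigma(f)(x) = H_{f \gamma_{p(x)}} \circ F_0^f \circ H_{\gamma_{p(x)}}^{-1}(x),
\]
where $H$ denotes horizontal transport as in \autoref{horizontal}. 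The area-preserving argument of \autoref{lemma_5_8} guarantees that this formula yields an element of $\Aut_1(\xi)$ that is independent of the auxiliary path choice, and in particular $P \circ \sigma = \id_U$.

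The main obstacle is showing that $\sigma\colon U \to \Aut_1(\xi) \subset \Diff(S^3)$ is smooth in the Fr\'echet sense. This reduces to smoothness in parameters of the two building blocks: the nearest-neighbor map $F_0^f$ is manifestly smooth in $f$ by the elementary formula in \autoref{appendix_nn}, while the horizontal transport $H_\gamma$ is the time-one flow of a time-dependent ODE on $S^3$ whose right-hand side depends linearly on $\dot\gamma$ via the connection $\alpha$. Smooth dependence on parameters for ODEs, upgraded to the $C^\infty$ convenient-calculus framework of \cite{KM}, yields smoothness of $\gamma \mapsto H_\gamma$ as a map into $\Diff(S^3)$; composing with the smooth evaluation, inversion, and composition operations of $\Diff(S^3)$ then gives smoothness of $\sigma$. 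The technical care is entirely in confirming smooth dependence of these pointwise-defined composition formulas as maps of Fr\'echet manifolds.

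Once smoothness of $\sigma$ is in hand, the local trivialization $\Psi_U\colon U \times S^1 \to P^{-1}(U)$ defined by $\Psi_U(f, e^{i\theta}) = \sigma(f) \cdot e^{i\theta}$ is a smooth bijection with smooth inverse $F \mapsto \bigl(P(F),\, \sigma(P(F))^{-1} F\bigr)$. Its translates $\Psi_{f_1}(g, e^{i\theta}) = F_1 \cdot \sigma(f_1^{-1} g) \cdot e^{i\theta}$ cover all of $\SDiff(S^2)$. On overlaps the transition functions take values in the central subgroup $S^1$ and are smooth by construction, so the sequence $S^1 \hookrightarrow \Aut_1(\xi) \xrightarrow{P} \SDiff(S^2)$ is a principal $S^1$-bundle of Fr\'echet manifolds, as claimed.
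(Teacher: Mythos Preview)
Your proposal is correct and follows essentially the same route as the paper: construct a smooth local section over a neighborhood of the identity via the horizontal-transport formula $\sigma(f)(x)=H_{f\gamma}\circ F_0^f\circ H_{\gamma}^{-1}(x)$ with $F_0^f$ the nearest-neighbor map, promote it to a local trivialization $\Psi(f,e^{i\theta})=\sigma(f)\,e^{i\theta}$ with smooth inverse $F\mapsto(P(F),\sigma(P(F))^{-1}F)$, and then propagate by group translation. The only cosmetic difference is that the paper avoids your partition-of-unity maneuver for the path family $\gamma_y$ by instead changing the basepoint $y_0$ and using path-independence to glue, and it packages the smooth-dependence-on-parameters argument for $H_\gamma$ into appendix propositions about $\mathrm{Path}^\ast$, $\mathrm{Lift}$, and $\mathrm{Eval}$ rather than citing ODE theory directly.
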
 

\begin{proof} This amounts to constructing slices over small open sets in $\SDiff(S^2)$, and then using the action of the subgroup $S^1$ to promote these slices to the product neighborhood needed to confirm the bundle structure.

First, we note that $F$, which was defined by the formula
\begin{equation}
F(x) = H_{f\gamma} \circ F_0 \circ H_{\gamma}^{-1}(x)
\end{equation}
depends smoothly on $f \in \SDiff(S^2)$. This follows from the fact that the composition map
\begin{gather}
\begin{split}
\circ \colon \SDiff(S^2) \times \mathrm{Path}(S^2) &\rightarrow \mathrm{Path}(S^2) \\
(f, \gamma) &\mapsto f \circ \gamma 
\end{split}
\end{gather}
is smooth in the Fr\'echet category, together with the fact that $F$ is smooth as a function of $x\in S^3$, $\gamma\in \mathrm{Path}(S^2)$ and $f\in \SDiff(S^2)$ (see \autoref{prop_path_sn} and \autoref{prop_smoothness_F}). 

Second, we restrict attention to a small neighborhood of the identity $\mathrm{id} \in \SDiff(S^2)$, for example the set
\begin{equation}
U = \big\{ f \in \SDiff(S^2) \, : \, d(y, f(y)) < \pi/4,~~\forall y \in S^2 \big\},
\end{equation}
where we regard $S^2$ as the sphere of radius $\frac{1}{2}$ so that the Hopf projection $p \colon S^3 \rightarrow S^2$ is a Riemannian submersion. Restricting $f$ to this open set $U$ will let us uniquely define the nearest neighbor map from $p^{-1}(y_0)$ to $p^{-1}(f(y_0))$ to serve as the map $F_0$.

To construct our slice, define $\varphi \colon U \rightarrow \mathrm{Aut}_1(\xi)$ by
\begin{equation}
\varphi(f) = F,~\text{where $F$ is the map}~ F(x) = H_{f\gamma} \circ F_0 \circ H_{\gamma}^{-1}.
\end{equation}
Note that the nearest neighbor map $F_0 \colon p^{-1}(y_0) \rightarrow p^{-1}(f(y_0))$ between Hopf fibers depends smoothly on $f$ \cite{Eells}, and $\gamma$ is chosen as the (unique) shortest geodesic connecting $y_0$ and $f(y_0)$, which is possible since $f \in U$.

Hence $\varphi \colon U \rightarrow \mathrm{Aut}_1(\xi)$ is a smooth map of Fr\'echet manifolds, with
\begin{equation}
P \circ \varphi = \mathrm{id_U} \colon U \rightarrow U.
\end{equation}
This is the slice over $U$ for the proposed bundle \eqref{eq_fiber_bundle}. We now promote this slice to a product neighborhood in $\mathrm{Aut}_1(\xi)$ over $U$ by using the action of the circle group $S^1$ as follows. Let
\begin{gather}\label{eq_map_slice}
\begin{split}
\Phi \colon S^1 \times U &\rightarrow \mathrm{Aut}_1(\xi) \\
(\theta, f) &\mapsto e^{i\theta}\varphi(f) = e^{i\theta}F
\end{split}
\end{gather}
where the right hand side takes the element $\varphi(f)$ of $\mathrm{Aut}_1(\xi)$ and either follows or precedes it (same result) by uniformly rotating all Hopf fibers through the angle $\theta$. Since multiplication in the Fr\'echet Lie group $\mathrm{Aut}_1(\xi)$ is smooth, it follows that \eqref{eq_map_slice} is a smooth map of Fr\'echet manifolds. To check that it gives the local product structure required to confirm that \eqref{eq_fiber_bundle} is a Fr\'echet fiber bundle, we write down its inverse $\Phi^{-1}$ explicitly and check that it is also smooth.

\vspace{-20pt}
	\begin{figure}[h!]
	\begin{center}
		\includegraphics[scale=0.5]{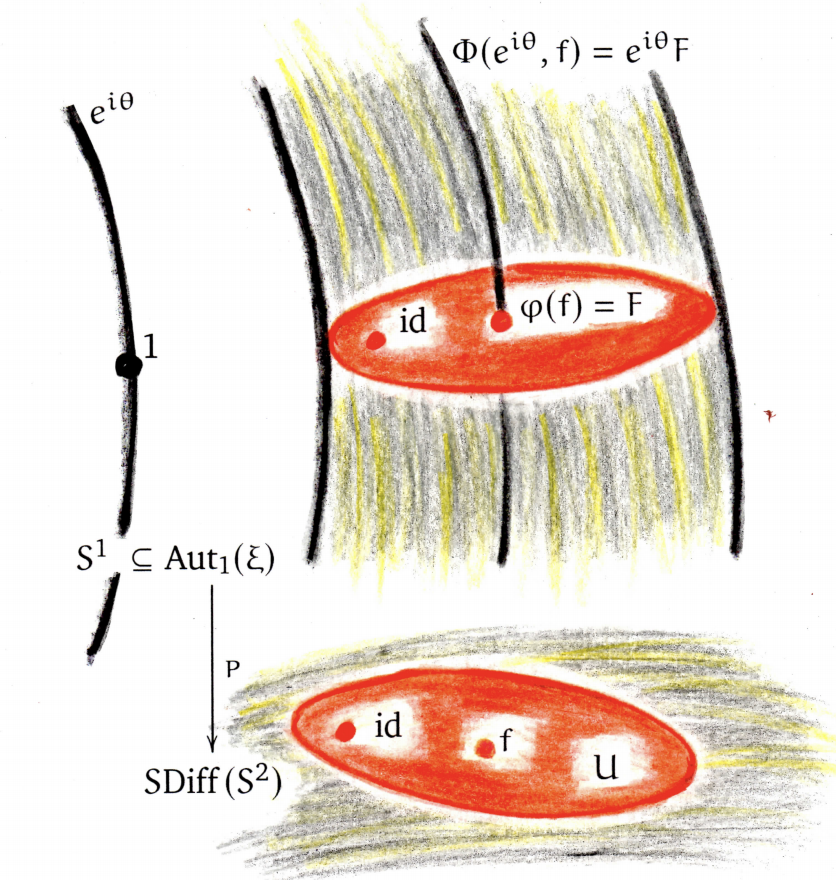}\caption{Local product structure}\label{localprodpic}
	\end{center}
	\end{figure}
	
\vspace{-10pt}

To define $\Phi^{-1} \colon P^{-1}(U) \rightarrow S^1 \times U$, let $G$ be any diffeomorphism of $S^3$ lying in the tube $P^{-1}(U) \subseteq \mathrm{Aut}_1(\xi)$ and let $f = P(G) \in U$. Then define $F = \varphi(f)$, and since $P(F) = f$, the diffeomorphisms $G$ and $F$ lie in the same circular fiber $P^{-1}(f)$, separated by some angle $\theta$. We identify this angle by $\theta = GF^{-1}$. Define
\begin{gather}\label{eq_map_slice2}
\begin{split}
\Phi^{-1} \colon P^{-1}(U) &\rightarrow S^1 \times U \\
G &\mapsto (G \circ F^{-1}, ~P(G))
\end{split}
\end{gather}
where $F = \varphi\big(P(G)\big)$. Since $f$ depends smoothly on $G$ and $F$ depends smoothly on $f$, and since inversion and multiplication in the Fr\'echet Lie group $\mathrm{Aut}_1(\xi)$ are smooth maps, we see that $GF^{-1} = \theta$ also depends smoothly on $G$. 
The equations
\begin{gather}
\begin{split}
\Phi \circ \Phi^{-1}(G) &= \Phi(G \circ F^{-1},\, f) = G\circ F^{-1} \circ F = G \\
\Phi^{-1} \circ \Phi(\theta,~f) &= \Phi^{-1}(e^{i\theta}F) = (\theta,~ f)
\end{split}
\end{gather}
confirm that $\Phi$ and $\Phi^{-1}$ are indeed inverses of each other, and this proves that $\Phi$ is a diffeomorphism, so that we have a bundle structure over the open neighborhood $U$ of the identity in $\SDiff(S^2)$.

Finally, the fact that the map $P \colon \mathrm{Aut}_1(\xi) \rightarrow \SDiff(S^2)$ is a smooth homomorphism of Fr\'echet Lie groups provides the homogeneity needed to transfer the above argument to small open sets throughout $\SDiff(S^2)$. This completes our proof that $S^1\to \Aut_1(\xi)\xrightarrow{P}  \SDiff (S^2)$
is a fiber bundle in the world of Fr{\'e}chet manifolds and smooth maps between them.
\end{proof}


\appendix

\section{Fr\'echet spaces and manifolds}\label{appendix_frechet}


\noindent For convenience, we give a brief introduction to Fr\'echet spaces and manifolds in this appendix. After that, we prove some technical results which are used in the proof of the main theorem. For more on this 
subject, we refer the reader to \cite{Ham} and \cite{omori}.

\subsection{Fr\'echet spaces}

\noindent Let $\mathbb{V}$ be a vector space. A \emph{seminorm} on $\mathbb{V}$ is a function $\rho\colon \mathbb{V} \rightarrow [0, \infty)$ satisfying the following properties:
\begin{enumerate}
	\item $\rho(\lambda v) = |\lambda|\rho(v), \forall v \in \mathbb{V}, \lambda \in \mathbb{R}$;
	\item $\rho(v + w) \leq \rho(v) + \rho(w), \forall v, w \in \mathbb{V}$.
\end{enumerate}
If $\rho(v) = 0$ implies $v = 0$, then $\rho$ is called a \emph{norm}.

An arbitrary collection $\{\rho_{\alpha}\}$ of seminorms on $\mathbb{V}$ induces a unique topology $\mathcal{T}$ on $\mathbb{V}$ by declaring that a sequence $\{v_n\}$ in $\mathbb{V}$ converges to $v \in \mathbb{V}$ if and only if $\rho_\alpha(v_n - v) \rightarrow 0$ for all $\alpha$. From this, we declare that a subset $F \subseteq \mathbb{V}$ is closed if it contains its limit points. This topology makes $\mathbb{V}$ into a \emph{topological vector space}, in the sense that the operations of addition and multiplication by scalars are continuous.

Fix a collection $\{\rho_{\alpha}\}$ of seminorms on $\mathbb{V}$ and let $\mathcal{T}$ be the topology generated by them. We say that two collections of seminorms are equivalent if they generate the same topology. Then $\mathcal{T}$ is  \emph{metrizable} if and only if it admits an equivalent \emph{countable} family of seminorms, $\{\rho_{j}\}_{j \in \mathbb{N}}$. In this case, we can define an explicit metric by
\begin{equation}
d(u, v) = \sum\limits_{j = 1}^{\infty} 2^{-j}\, \dfrac{\rho_j(u - v)}{1 + \rho_j(u - v)}
\end{equation}
In this paper, we are interested in the metrizable case, so we work under this assumption from now on. The topology $\mathcal{T}$ is \emph{Hausdorff} if and only if $\rho_{j}(v) = 0$ for all $j$ implies $v = 0$, and it is \emph{complete} if every Cauchy sequence converges. A sequence $\{v_n\}$ in $\mathbb{V}$ is Cauchy if, for each fixed $j$, we have $\rho_j(v_n - v_m) \rightarrow 0$ as $n, m \rightarrow \infty$. 

A vector space $\mathbb{V}$ equipped with a countable family of seminorms $\{\rho_j\}_{j \in \mathbb{N}}$ is a \emph{Fr\'echet space} provided that the topology induced by $\{\rho_j\}_{j \in \mathbb{N}}$, as described above, is Hausdorff and complete.

Let $\mathbb{V}$ and $\mathbb{W}$ be Fr\'echet spaces and $\mathcal{U} \subseteq \mathbb{V}$ be an open set. We say that a continuous map $F\colon \mathcal{U} \subseteq \mathbb{V} \rightarrow \mathbb{W}$ is \emph{differentiable} at $p \in \mathbb{V}$ in the direction $v \in \mathbb{V}$ provided that the limit
\begin{equation}
DF(p)v = \lim\limits_{t \to 0} \dfrac{F(p + tv) - F(p)}{t}
\end{equation}
exists. If this limit exists for all $p \in \mathcal{U}$ and all $v \in \mathbb{V}$, we can form the map
\begin{gather}
\begin{split}
dF\colon \mathcal{U} &\times \mathbb{V}\rightarrow \mathbb{W} \\
(p, v) &\mapsto dF(p)v
\end{split}
\end{gather}
If $dF$ is continuous, as a map from $\mathcal{U} \times \mathbb{V}$ with the product topology into $\mathbb{W}$, then we say $F$ is $C^1$ or \emph{continuously differentiable}. We avoid thinking of $F$ as a map into $\mathrm{L}(\mathbb{V}, \mathbb{W})$, since this is usually not a Fr\'echet space in a natural way. This definition is weaker than the one usually given for maps between Banach spaces.

Proceeding inductively, we define the second derivative of $F$ as
\begin{equation}
d^2F(p)(v_1, v_2) = \lim\limits_{t \to 0} \dfrac{dF(p + tv_1)(v_2) - dF(p)(v_2)}{t}
\end{equation}
and say that $F$ is $C^2$ provided that the map
\begin{gather}
\begin{split}
d^2F\colon \mathcal{U} &\times \mathbb{V} \times \mathbb{V}  \rightarrow \mathbb{W} \\
(p, v_1, v_2) &\mapsto d^2F(p)(v_1, v_2)
\end{split}
\end{gather}
exists and is continuous, and likewise for $C^k$.

We say that $F$ is \emph{smooth} provided it is $C^k$ for all $k$. This notion of smoothness agrees with the standard one in the case where $\mathbb{V}$ and $\mathbb{W}$ are finite dimensional.

A standard example of a Fr\'echet space is $C^{\infty}[a, b]$, the set of all smooth functions from $[a, b]$ to $\mathbb{R}$, equipped with the family of seminorms given by
\begin{equation}
\rho_j(f) = \sup\limits_{x \in [a, b]} |D^jf(x)|
\end{equation}
for $j \geq 0$, with the convention that $D^0f = f$. One can readily check the Hausdorff and completeness conditions.

\subsection{Fr\'echet manifolds}

\noindent A Fr\'echet manifold modeled on $\mathbb{V}$ is a Hausdorff topological space $\mathcal{M}$ with an atlas $\mathcal{A} = \{\varphi_i\}$ of homeomorphisms $\varphi_i \colon U_i \subseteq \mathcal{M} \rightarrow V_i \subseteq \mathbb{V}$ between open sets $U_i$ of $\mathcal{M}$ and $V_i$ of $\mathbb{V}$  such that the transition maps
$$\varphi_j^{-1} \circ \varphi_i \colon U_i \cap U_j \rightarrow U_i \cap U_j$$
are smooth maps between Fr\'echet spaces.

Let $\mathcal{M}$ be a Fr\'echet manifold and $\mathcal{N}$ a closed subset of $\mathcal{M}$. We say that $\mathcal{N}$ is a \emph{Fr\'echet submanifold} of $\mathcal{M}$ if for every $p\in\mathcal{N}$, there exists a coordinate chart $\varphi \colon \mathcal{U} \subseteq \mathcal{M} \rightarrow V \subseteq \mathbb{V}$ of $\mathcal{M}$ with $p \in \mathcal{U}$ and a subspace $\mathbb{W}$ of $\mathbb{V}$ such that
\begin{equation}
\varphi\big(\mathcal{U} \cap \mathcal{N}\big) = \big(\!\{0\} \times \mathbb{W}\big) \cap V
\end{equation}
We say that $\varphi$ is a coordinate chart \emph{adapted} to $\mathcal{N}$. 

At any point $p \in \mathcal{M}$, the \emph{tangent space} $T_p\mathcal{M}$ can be defined as follows. First, consider the set of all triples $(\mathcal{U}, \varphi, v)$, where $\varphi$ is a local chart at $p$ and $v \in \mathbb{V}$. We say that two triples $(\mathcal{U}_i, \varphi_i, v_i)$, $i = 1, 2$, are equivalent if
$$d(\varphi_2 \circ \varphi_1^{-1})v_1 = v_2$$
Then $T_p\mathcal{M}$ is the set of all such equivalence classes. Although this is a rather cumbersome description of the tangent space, in many situations a much more concrete one is available, as we shall see below. In what follows, we describe in detail a number of Fr\'echet manifolds that are used throughout the paper.

\subsection{Examples}

\noindent Let $M$ be a smooth, closed, finite-dimensional manifold. Then the group $\mathrm{Diff}(M)$ of all diffeomorphisms from $M$ to itself, equipped with the $C^{\infty}$ topology, is a Fr\'echet manifold. Following  \cite{Eells}, we describe an atlas for $\mathrm{Diff}(M)$, modeled on Fr\'echet spaces of vector fields.

Let $C^{\infty}(TM)$ be the space of all smooth vector fields on $M$. Choose a Riemannian metric $g$ on $M$ and let $\nabla$ denote its Levi-Civita connection. For each $n \in \mathbb{N}$, let
\begin{equation}
\| v \|_n = \sup\limits_{x \in M} \big\|(\nabla^n v)(x)\big\|
\end{equation}
where
\begin{equation}
\big\|(\nabla^n v)(x)\big\| = \sup_{\substack{\| e_i \| = 1 \\ i=1,\ldots,n}} \big\| \nabla_{e_1}\cdots\nabla_{e_n} v(x)\big\|
\end{equation}
The vector space $C^{\infty}(TM)$ equipped with the collection of seminorms $\{ \| \, \|_n \}$ is a Fr\'echet space (cf \cite{Ham}). More generally, given $f \in \mathrm{Diff}(M)$, we let
\begin{equation}
C^{\infty}(f^\dast TM) = \{ v \circ f \, : \, v \in C^{\infty}(TM) \}
\end{equation}
The set $C^{\infty}(f^\dast TM)$ of vector fields along $f$ is again a Fr\'echet space, and the map $v \mapsto v \circ f$ is a linear isomorphism between $C^{\infty}(TM)$ and $C^{\infty}(f^\dast TM)$.

Let $\exp_p \colon T_pM \rightarrow M$ be the exponential map associated with the Riemannian metric $g$ on $M$. Given a diffeomorphism $f \in \mathrm{Diff}(M)$, there exists an open neighborhood $\mathcal{U}_f \subseteq C^{\infty}(f^\dast TM)$ containing the zero section, and an open neighborhood $V_f \subseteq \mathrm{Diff}(M)$ containing $f$ such that
\begin{gather}\label{exp_chart}
\begin{split}
\mathrm{Exp}_f \colon U_f \subseteq C^{\infty}(f^\dast TM) &\rightarrow V_f \subseteq \mathrm{Diff}(M) \\
v \circ f &\mapsto \exp \big( v \circ f \big)
\end{split}
\end{gather}
is a homeomorphism (\cite{leslie},\! \cite{omori},\! \cite{KM}). We see from the definition that the transition maps are smooth. The collection of maps $\{ \mathrm{Exp}_f : f \in \mathrm{Diff}(M) \}$ cover $\mathrm{Diff}(M)$, and the maximal atlas compatible with this collection defines the manifold structure on $\mathrm{Diff}(M)$. Furthermore, this manifold structure makes $\mathrm{Diff}(M)$ a \emph{Fr\'echet Lie group}, in the sense that the natural operations of multiplication
\begin{gather}
\begin{split}
\circ : \mathrm{Diff}(M) \times \mathrm{Diff}(M) &\rightarrow \mathrm{Diff}(M) \\
(f, \, g) &\mapsto f \circ g
\end{split}
\end{gather}
and inversion
\begin{gather}
\begin{split}
\mathrm{inv} : \mathrm{Diff}(M) &\rightarrow \mathrm{Diff}(M) \\
f &\mapsto f^{-1}
\end{split}
\end{gather}
are smooth. We remark that it is possible to model $\mathrm{Diff}(M)$ as a Banach manifold, if we choose to work with the $C^k$ topology, or a Hilbert manifold, using $L^2$ Sobolev topologies. In this case, we could construct coordinate charts in the same way as \eqref{exp_chart}. However, the resulting Banach or Hilbert manifold would \emph{not} be a Lie group: both the composition and the inversion maps above would be continuous but not differentiable.

On the other hand, a disadvantage of working in the Fr\'echet category, as opposed to the Banach or Hilbert category, is that the classical Inverse Function Theorem is no longer true. Instead, it must be replaced by the celebrated \emph{Nash-Moser Inverse Function Theorem}; see \cite{Ham} for a detailed account of this. We will not need this theorem here.

The propositions to follow, \autoref{prop_path_sn} through \autoref{prop_smoothness_F}, are there to help us prove that the diffeomorphism $F \in \mathrm{Aut}_1(\xi)$ from \autoref{lemma_5_8} and \autoref{fiberbundle} depends smoothly on the point $x \in S^3$, the path $\gamma \in \mathrm{Path}(S^2)$ and the diffeomorphism $f \in \SDiff(S^2)$, the ingredients which went into its construction.

\begin{prop}\label{prop_path_sn}
The space $\mathrm{Path}(S^n)$  of $C^\infty$ maps from the interval $[0,1]$ into $S^n$ is a Fr\'echet manifold.
\end{prop}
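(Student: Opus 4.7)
The plan is to model $\mathrm{Path}(S^n)$ locally on Fr\'echet spaces of vector fields along paths, following the same exponential chart construction used for $\Diff(M)$ in \eqref{exp_chart}, but with the closed manifold $M$ replaced by the compact interval $[0,1]$ as domain.

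First I would equip $S^n$ with its round metric and let $\exp_p \colon T_pS^n \to S^n$ be the exponential map. For each $\gamma \in \mathrm{Path}(S^n)$, consider the pullback bundle $\gamma^{\ast}TS^n \to [0,1]$ and let $C^\infty(\gamma^{\ast}TS^n)$ be the space of $C^\infty$ sections, i.e.\ smooth maps $v \colon [0,1] \to TS^n$ with $v(t) \in T_{\gamma(t)}S^n$. Using the Levi-Civita connection $\nabla$ on $S^n$ to define iterated covariant derivatives along $\gamma$, the seminorms
\[
\| v\|_k = \sup_{t \in [0,1]} \bigl\| (\nabla^k v)(t)\bigr\|
\]
make $C^\infty(\gamma^{\ast}TS^n)$ into a Fr\'echet space, since $[0,1]$ is compact and the argument is formally identical to the one recalled for $C^\infty(f^{\ast}TM)$.

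Next I would construct the charts. Since $\gamma([0,1])$ is compact, there is an open neighborhood $\mathcal{U}_\gamma$ of the zero section in $C^\infty(\gamma^{\ast}TS^n)$, small enough that $v(t)$ lies in the injectivity radius of $\exp_{\gamma(t)}$ for every $t$, and an open neighborhood $V_\gamma$ of $\gamma$ in $\mathrm{Path}(S^n)$, such that
\[
\mathrm{Exp}_\gamma \colon \mathcal{U}_\gamma \to V_\gamma, \qquad v \longmapsto \bigl(t \mapsto \exp_{\gamma(t)}(v(t))\bigr)
\]
is a bijection. One checks it is a homeomorphism in the $C^\infty$ topology using uniform bounds on the exponential map and its derivatives on compact sets of $TS^n$, exactly as in Leslie \cite{leslie}, Omori \cite{omori} and Kriegl--Michor \cite{KM}. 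The collection $\{\mathrm{Exp}_\gamma \,:\, \gamma \in \mathrm{Path}(S^n)\}$ then covers $\mathrm{Path}(S^n)$.

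Finally, I would verify that the transition maps $\mathrm{Exp}_{\gamma_2}^{-1}\circ \mathrm{Exp}_{\gamma_1}$ are $C^\infty$ maps between open subsets of the Fr\'echet spaces $C^\infty(\gamma_1^{\ast}TS^n)$ and $C^\infty(\gamma_2^{\ast}TS^n)$. Concretely, given $v \in \mathcal{U}_{\gamma_1}\cap \mathrm{Exp}_{\gamma_1}^{-1}(V_{\gamma_2})$, the transition sends $v$ to the section $t \mapsto \exp_{\gamma_2(t)}^{-1}\bigl(\exp_{\gamma_1(t)}(v(t))\bigr)$, and smoothness of this assignment in the Fr\'echet sense follows from smoothness of $\exp$ and $\exp^{-1}$ together with the standard fact that postcomposition with a smooth fibrewise map induces a smooth map between the corresponding Fr\'echet spaces of sections on the compact base $[0,1]$. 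This atlas, together with the ambient Hausdorff $C^\infty$ topology on $\mathrm{Path}(S^n)$, exhibits $\mathrm{Path}(S^n)$ as a Fr\'echet manifold. The only mildly delicate step will be the last one: verifying smoothness of the transition maps in the Fr\'echet sense, since one must control all the difference quotients $D^k$ uniformly in $t\in[0,1]$; but the compactness of $[0,1]$ reduces this to standard smoothness of the exponential map on $S^n$.
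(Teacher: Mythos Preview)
Your proposal is correct and follows essentially the same approach as the paper: both build exponential charts $\mathrm{Exp}_\gamma$ modeled on the Fr\'echet space of smooth vector fields along $\gamma$, using the Riemannian exponential map on $S^n$ pointwise in $t$. The paper is slightly terser, citing \cite[Example 4.2.3]{Ham} rather than spelling out the seminorms and transition-map smoothness, but the construction is the same.
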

\begin{proof} Fix a curve $\gamma \in \mathrm{Path}(S^n)$. Then we can parametrize nearby curves in $\mathrm{Path}(S^n)$ by the Fr\'echet space
\begin{gather*}
\begin{split}
T_{\gamma}\mathrm{Path}(S^n) = \{ V\colon [0, 1] \rightarrow TS^n \, : \, \pi \circ V = \gamma \}
\end{split}
\end{gather*}
of vector fields on $S^n$ along $\gamma$, where $\pi : TS^n \rightarrow S^n$ is the projection from the tangent bundle of $S^n$ to $S^n$. The correspondence between these vector fields and curves near $\gamma$ is given by the Riemannian exponential map
\begin{gather*}
\begin{split}
\mathrm{Exp}\colon \mathcal{U} \subset T_{\gamma}\mathrm{Path}(S^n) &\rightarrow \mathrm{Path}(S^n) \\
V &\mapsto \mathrm{Exp}_{\gamma}(V)
\end{split}
\end{gather*}
where $\mathcal{U}$ is the subset of vector fields along $\gamma$ with magnitude less than $\pi/2$. 

The inverse of this map is given as follows. If $\beta \in \mathrm{Path}(S^n)$ is a curve close to $\gamma$, meaning that the spherical distance $d_{S^n}(\beta(t), \gamma(t)) < \pi/2$ for all $t$, then there exists a unique geodesic from $\gamma(t)$ to $\beta(t)$ with initial velocity $V(t)$. By construction,
$$\mathrm{Exp}_{\gamma(t)}(V(t)) = \beta(t),~~\forall t \in [0, 1]$$
This proves that $\mathrm{Path}(S^n)$ is a Fr\'echet manifold (cf \cite[Example 4.2.3]{Ham}).\end{proof}

	\begin{figure}[h!]
	\begin{center}
		\includegraphics[scale=0.45]{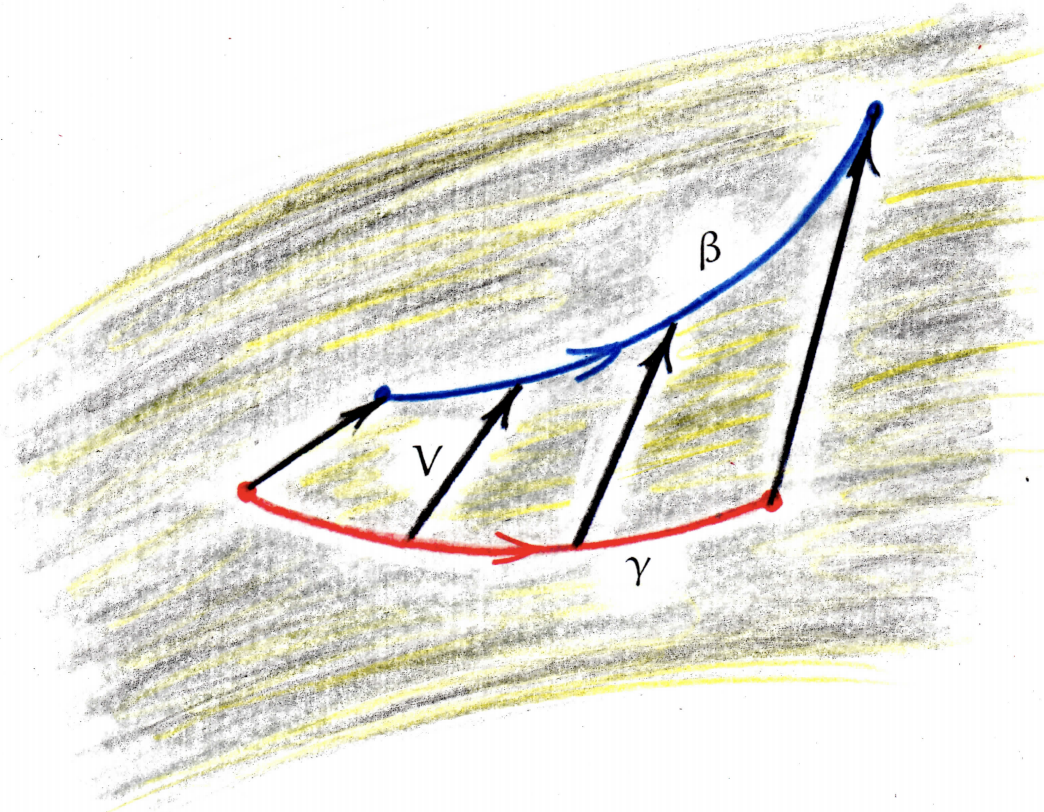}\caption{$\beta=\mathrm{Exp}_\gamma(V)$}\label{beta}
	\end{center}
	\end{figure}

\begin{prop}\label{prop_path_star}
	The set
	$$\mathrm{Path}^\dast = \{ (\gamma, x) \, : \text{$\gamma \in \mathrm{Path}(S^2)$ and $x \in S^3$ with $\gamma(0) = p(x) \in S^2$} \, \}$$
	is a Fr\'echet manifold, and a smooth submanifold of $\mathrm{Path}(S^2) \times S^3$.
\end{prop}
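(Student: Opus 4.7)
The plan is to construct explicit Fr\'echet submanifold charts at each point of $\mathrm{Path}^\dast$. Fix a point $(\gamma_0, x_0) \in \mathrm{Path}^\dast$ and set $y_0 = \gamma_0(0) = p(x_0)$. I will exhibit a chart of $\mathrm{Path}(S^2) \times S^3$ near $(\gamma_0, x_0)$ in which the defining constraint $\gamma(0) = p(x)$ becomes a linear equation cutting out a closed subspace of the Fr\'echet model, thereby showing $\mathrm{Path}^\dast$ is a Fr\'echet submanifold.

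First, I would choose a small open neighborhood $U \subset S^2$ of $y_0$ over which the Hopf bundle is trivial, together with a smooth local section $\sigma \colon U \to S^3$ of $p$ satisfying $\sigma(y_0) = x_0$. Every $x \in p^{-1}(U)$ then admits a unique expression $x = \sigma(p(x)) \cdot e^{i\theta}$ for a small angle $\theta$. Using the Exp-chart on $\mathrm{Path}(S^2)$ provided by \autoref{prop_path_sn}, nearby curves take the form $\gamma = \mathrm{Exp}_{\gamma_0}(V)$ for a vector field $V$ along $\gamma_0$ of small magnitude, and parametrizing $p(x) \in U$ as $p(x) = \exp_{y_0}(w)$ for small $w \in T_{y_0}S^2$, I obtain a smooth chart $(\gamma, x) \mapsto (V, w, \theta)$ on a neighborhood of $(\gamma_0, x_0)$ in $\mathrm{Path}(S^2) \times S^3$, modelled on the Fr\'echet space $T_{\gamma_0}\mathrm{Path}(S^2) \oplus T_{y_0}S^2 \oplus \mathbb{R}$.

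In this chart, the constraint $\gamma(0) = p(x)$ reads $\exp_{y_0}(V(0)) = \exp_{y_0}(w)$, equivalently $w = V(0)$, since $\exp_{y_0}$ is a diffeomorphism on a small ball. Performing the linear coordinate change $(V, w, \theta) \mapsto (V, w - V(0), \theta)$ produces a new chart in which $\mathrm{Path}^\dast$ is cut out by the single linear equation $w - V(0) = 0$, exhibiting it as a smooth Fr\'echet submanifold of $\mathrm{Path}(S^2) \times S^3$ modelled on $T_{\gamma_0}\mathrm{Path}(S^2) \oplus \mathbb{R}$. The main technical point is the smoothness of this coordinate change, which reduces to smoothness of the evaluation map $V \mapsto V(0)$ from the Fr\'echet space of vector fields along $\gamma_0$ to the finite-dimensional space $T_{y_0}S^2$; this is immediate from the seminorm estimate $\| V(0) \| \leq \sup_t \| V(t) \| = \| V \|_0$, together with the fact that continuous linear maps between Fr\'echet spaces are $C^\infty$.
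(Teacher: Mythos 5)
Your proof is correct, and it reaches the conclusion by a slightly different route than the paper. The paper parametrizes $\mathrm{Path}^\dast$ directly: it takes the closed subspace $\{(V,w)\in T_{\gamma}\mathrm{Path}(S^2)\times T_xS^3 \,:\, V(0)=dp(x)w\}$ as the model, uses the Exp-chart of \autoref{prop_path_sn} for the path factor and a Hopf trivialization splitting $T_yS^3\cong T_{p(y)}S^2\oplus\R$ for the $S^3$ factor, and asserts that every nearby point of $\mathrm{Path}^\dast$ arises from such a pair $(V,w)$; the submanifold claim is left largely implicit. You instead build a chart of the ambient product $\mathrm{Path}(S^2)\times S^3$ (same two ingredients: the Exp-chart on paths and a local section/trivialization of the Hopf bundle) and then straighten the constraint $\gamma(0)=p(x)$ by the continuous linear shear $(V,w,\theta)\mapsto(V,\,w-V(0),\,\theta)$, whose smoothness reduces to continuity of the evaluation $V\mapsto V(0)$. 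This produces an adapted chart in exactly the sense of the paper's definition of a Fr\'echet submanifold, so the ``smooth submanifold of $\mathrm{Path}(S^2)\times S^3$'' part of the statement is verified more explicitly than in the paper, and the manifold structure on $\mathrm{Path}^\dast$ (modeled on $T_{\gamma_0}\mathrm{Path}(S^2)\oplus\R$, which is isomorphic to the paper's constrained subspace) comes for free. What the paper's formulation buys is that its model space is literally the tangent space $T_{(\gamma,x)}\mathrm{Path}^\dast$ used afterwards. One cosmetic point: the expression $x=\sigma(p(x))\,e^{i\theta}$ has $\theta$ unique only modulo $2\pi$ on all of $p^{-1}(U)$; the uniqueness of a \emph{small} $\theta$ holds only for $x$ near the image of the section, which is all you actually use, so no harm is done.
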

\begin{proof} Fix a point $(\gamma, x) \in \mathrm{Path}^\dast$. We will show that points in $\mathrm{Path}^\dast$ near $(\gamma ,x)$ can be parametrized by vectors in the Fr\'echet space
\begin{equation*}
T_{(\gamma,\, x)}\mathrm{Path}^\dast = \big \{ (V, w) \, : \, V \in T_{\gamma}\mathrm{Path}(S^2),~w \in T_xS^3~\text{and}~ V(0) = dp(x)w \big \}
\end{equation*}
Choose a local trivialization of the Hopf fibration
\begin{equation*}
\Psi \colon U_0 \times S^1 \rightarrow p^{-1}(U_0)
\end{equation*}
containing $p(x) \in U_0$. Using this trivialization, for each $y \in p^{-1}(U_0)$ we write
\begin{equation}\label{eq_decomp_tangent}
T_yS^3 = T_{p(y)}S^2 \oplus \mathbb{R}.
\end{equation}
\begin{figure}[h!]
	\begin{center}
		\includegraphics[scale=0.45]{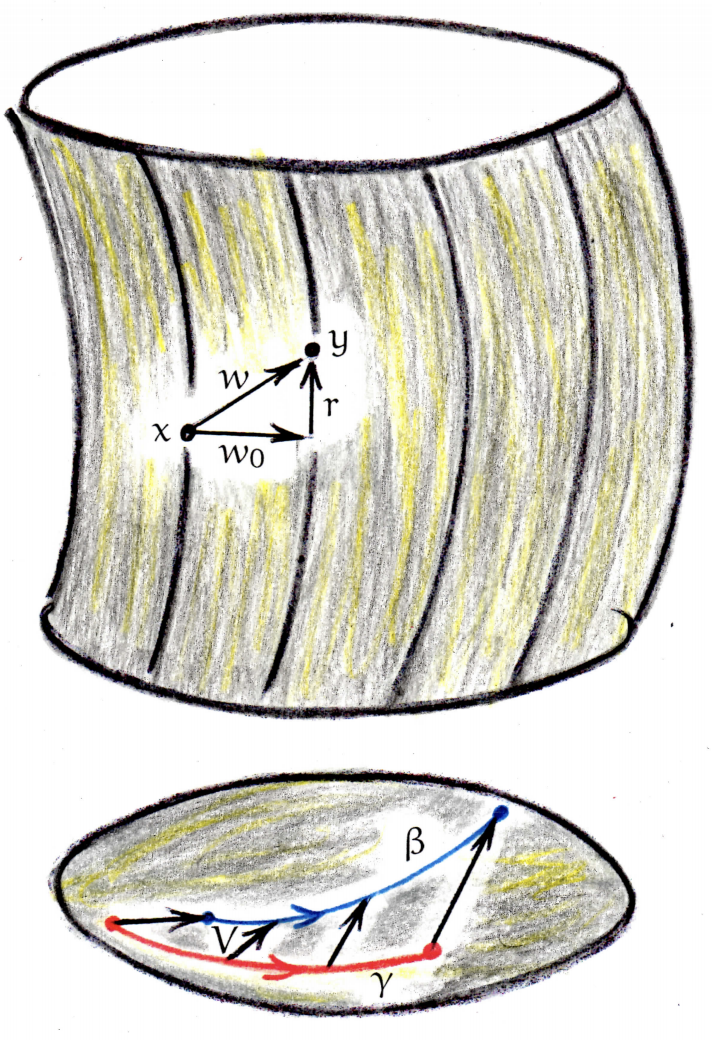}\caption{$\mathrm{Path}^\dast$ is a Fr{\'e}chet manifold}\label{pathstar}
	\end{center}
	\end{figure}	

Now, given $(V, w) \in T_{(\gamma,\, x)}\mathrm{Path}^\dast$ with $V$ and $w$ sufficiently small, we first let
\begin{equation*}
\beta(t) = \mathrm{Exp}_{\gamma(t)}\big(V(t)\,\big)
\end{equation*}
as before, so $\beta$ is a curve in $S^2$ near the original $\gamma$. Then write $w = (w_0, r)$ according to the decomposition \autoref{eq_decomp_tangent}, and set
\begin{equation*}
y = \Psi\big(\mathrm{Exp}(\! w_0),\,\, e^{i r}\big)
\end{equation*}
where  $\mathrm{Exp}$ is the exponential map in $S^3$. The geodesic $s \mapsto \mathrm{Exp}(s\, w_0)$ is horizontal to the Hopf fibers, since it starts that way and $p$ is a Riemannian submersion.
The map
\begin{equation}
\widetilde{\mathrm{Exp}}(V, w) = (\beta, y)
\end{equation}
is our coordinate chart for $\mathrm{Path}^\dast$. It is clear that any pair $(\beta, y) \in \mathrm{Path}^\dast$ sufficiently close to $(\gamma, x)$ can be obtained in this way as the image of some $(V, w)$ under $\widetilde{\mathrm{Exp}}$. 
	
	\end{proof}

\vskip 0.1in
\begin{prop}\label{prop_lift_smooth}
The map $\mathrm{Lift} : \mathrm{Path}^\dast \rightarrow \mathrm{Path}(S^3)$, which takes a pair $(\gamma, x)$ to the unique horizontal lift $\overline{\gamma}$ of $\gamma$ starting at $x$, is smooth.
\end{prop}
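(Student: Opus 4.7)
The strategy is to express the horizontal lift locally as parallel transport for the connection one-form $\alpha$, write it down by an explicit integral formula in a trivialization, and then use smoothness of composition and integration operators in the Fr\'echet category. I would first fix a base pair $(\gamma_0, x_0) \in \mathrm{Path}^\dast$ and cover the image $\gamma_0([0,1]) \subset S^2$ by finitely many open sets $U_1,\dots, U_N$ over which the Hopf bundle is trivializable, say $\Psi_i \colon U_i \times S^1 \xrightarrow{\cong} p^{-1}(U_i)$. By compactness of $[0,1]$, I would choose a partition $0=t_0<t_1<\dots<t_N=1$ so that $\gamma_0([t_{i-1}, t_i]) \subset U_i$, and by continuity of evaluation on $\mathrm{Path}(S^2)$, the same containment $\gamma([t_{i-1}, t_i]) \subset U_i$ persists for all $\gamma$ in a small neighborhood $\mathcal{V}$ of $\gamma_0$ in $\mathrm{Path}(S^2)$. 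This shrinks the problem to a finite composition of smooth operators, provided each piece is smooth.

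On each strip $[t_{i-1}, t_i]$, I would use the trivialization $\Psi_i$ to pull the connection $\alpha$ back to a smooth one-form $\mathcal{A}_i$ on $U_i$ (plus the Maurer--Cartan form on $S^1$), and write the horizontal lift in coordinates as $\overline{\gamma}(t) = \Psi_i(\gamma(t),\, e^{i\theta(t)})$, where $\theta$ solves the scalar ODE $\theta'(t) = -\mathcal{A}_i(\gamma(t))\,\gamma'(t)$ with initial condition determined by the piece's starting point in the fiber. This ODE is solved explicitly by
\begin{equation*}
\theta(t) = \theta(t_{i-1}) - \int_{t_{i-1}}^{t} \mathcal{A}_i\bigl(\gamma(s)\bigr)\,\gamma'(s)\, ds.
\end{equation*}
Hence the per-strip lift is obtained by composing four operations: evaluation/restriction $\gamma \mapsto \gamma|_{[t_{i-1}, t_i]}$, pointwise application of the smooth map $\mathcal{A}_i$ (a smooth Omega-lemma type operator $C^\infty([t_{i-1},t_i], U_i) \to C^\infty([t_{i-1},t_i], T^\dast U_i)$), differentiation $\gamma \mapsto \gamma'$, and indefinite integration. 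Each is a smooth operation between the relevant Fr\'echet spaces (the first and third are continuous linear, the fourth is continuous linear on $C^\infty([0,1])$, and the composition-with-$\mathcal{A}_i$ is smooth by the standard Omega-lemma in the $C^\infty$ category, as in Hamilton \cite{Ham} and Kriegl--Michor \cite{KM}).

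To assemble the global lift, I would define $\mathrm{Lift}$ on $\mathcal{V} \times \{x : (\gamma_0, x) \in \mathrm{Path}^\dast\text{-neighborhood}\}$ by induction on $i$: set the starting point on the $i$th strip to be the endpoint of the lift on the $(i-1)$st strip, which depends smoothly on the data by the previous step together with smoothness of evaluation at $t_{i-1}$. Concatenation of smoothly varying pieces along a fixed partition is itself smooth into $\mathrm{Path}(S^3)$, using the coordinate charts on $\mathrm{Path}(S^3)$ constructed by the Riemannian exponential as in \autoref{prop_path_sn}. Since smoothness is local, this establishes $\mathrm{Lift}$ as a smooth map on all of $\mathrm{Path}^\dast$.

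The main obstacle is verifying that the composition operator $\gamma \mapsto \mathcal{A}_i\circ \gamma$ is genuinely smooth between the $C^\infty$ Fr\'echet function spaces, rather than merely continuous: this is where one must invoke the Omega-lemma (or convenient-calculus) results of \cite{Ham, KM}, keeping track that the derivatives $D^k(\gamma \mapsto \mathcal{A}_i\circ \gamma)$ exist and are jointly continuous. Once that smoothness is granted, all the remaining operators are linear and continuous, so the rest of the argument is a straightforward bookkeeping of a finite composition.
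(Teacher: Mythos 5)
Your proposal is correct, but it argues along a genuinely more explicit route than the paper. The paper's own proof is essentially one line: $\overline{\gamma}$ is the unique solution of the system $\langle \overline{\gamma}\,', A\rangle = 0$, $p\circ\overline{\gamma}=\gamma$, $\overline{\gamma}(0)=x$, and this solution "depends smoothly on the initial condition $x$ and the parameter $\gamma$," with the explicit dependence deferred to the later computation in which curves in $\SDiff(S^2)$ are lifted to $\Aut_1(\xi)$. Since the parameter $\gamma$ ranges over the infinite-dimensional Fr\'echet manifold $\mathrm{Path}(S^2)$, that smooth-dependence assertion is precisely what needs substantiating, and your argument supplies it: because the structure group of the Hopf bundle is the abelian group $S^1$, in each principal trivialization the horizontal-lift equation becomes the explicitly integrable scalar equation $\theta'(t)=-\mathcal{A}_i(\gamma(t))\gamma'(t)$, so each per-strip lift is a finite composition of restriction, differentiation and integration (continuous linear operators) with one composition operator, whose smoothness is the Omega-lemma of \cite{Ham, KM}. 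This mirrors, at the level of this proposition, the integral formula for the adjusting angle $\theta(t)$ that the paper only derives later for the bundle $\Aut_1(\xi)\to\SDiff(S^2)$; what your version buys is a self-contained proof that does not lean on an unproved Fr\'echet-parameter ODE theorem, at the cost of the trivialization bookkeeping. Two points worth making explicit: (i) the trivializations $\Psi_i$ should be principal-bundle trivializations, so that $\Psi_i^\dast\alpha = d\theta + \mathrm{pr}^\dast\mathcal{A}_i$; this is legitimate because $\alpha$ is invariant under the circle action, being the metric dual of the invariant field $A$. (ii) Smoothness of the reassembled map into $\mathrm{Path}(S^3)$ deserves a sentence: each concatenation is globally $C^\infty$ because it coincides with the unique global horizontal lift, and since every $C^k$ seminorm on $C^\infty([0,1],S^3)$ is the maximum of the corresponding seminorms over the subintervals of the fixed partition, smoothness of the finitely many per-strip maps implies smoothness of the glued map. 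With those remarks added, your argument is complete.
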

\begin{proof} Let $\overline{\gamma}(t) = \mathrm{Lift}(\gamma, x)(t)$. By definition, $\overline{\gamma}$ is the unique solution of the system
\begin{gather}\label{eq_lift_definition}
\begin{split}
\langle \overline{\gamma} \ ', A \rangle &= 0 \\
p \circ \overline{\gamma}(t) &= \gamma(t) \\
\overline{\gamma}(0) &= x
\end{split}
\end{gather}
 which depends smoothly on the initial condition $x$ and the parameter $\gamma$. We will  compute this dependence explicitly when lifting curves from $\SDiff(S^2)$ to $\mathrm{Aut}_1(\xi)$.
 
  \end{proof}
 	\begin{figure}[h!]
	\begin{center}
		\includegraphics[scale=0.45]{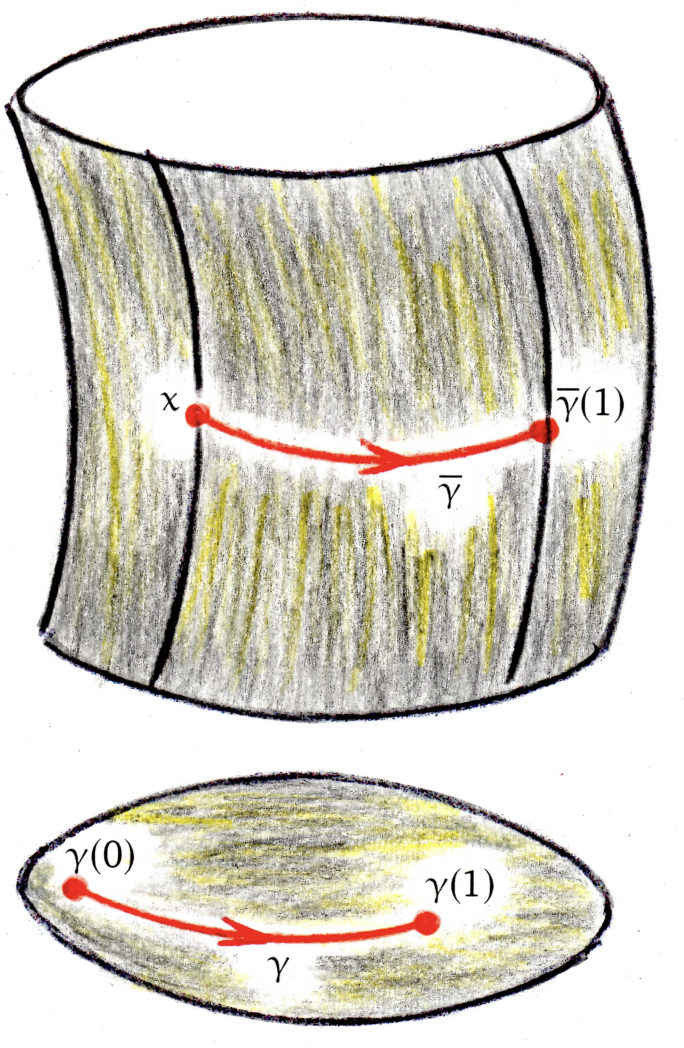}\caption{The path $\gamma$ in $S^2$ lifts to the horizontal path $\overline{\gamma}$ in $S^3$}\label{liftpathstar}
	\end{center}
	\end{figure}	
	
\vspace{10pt}	

Borrowing notation from the proof of \autoref{prop_lift_smooth}, we let $\mathrm{Eval} \colon \mathrm{Path}^\dast \rightarrow S^3$ be the map that sends $(\gamma, x)$ to the endpoint $\overline{\gamma}(1)$ of its lift. Then this is also a smooth map.

\begin{prop}\label{prop_eval_smooth}
The map $\mathrm{Eval} \colon \mathrm{Path}^\dast \rightarrow S^3$ is smooth.
\end{prop}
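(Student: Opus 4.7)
The plan is to factor $\mathrm{Eval}$ through the lift map and then reduce to the smoothness of endpoint evaluation on path spaces. Specifically, writing $\mathrm{ev}_1 \colon \mathrm{Path}(S^3) \to S^3$ for the map $\beta \mapsto \beta(1)$, we have the identity
\begin{equation*}
\mathrm{Eval} = \mathrm{ev}_1 \circ \mathrm{Lift} \colon \mathrm{Path}^\dast \longrightarrow \mathrm{Path}(S^3) \longrightarrow S^3.
\end{equation*}
Since \autoref{prop_lift_smooth} already gives that $\mathrm{Lift}$ is smooth, it suffices to verify that $\mathrm{ev}_1$ is a smooth map between Fr\'echet manifolds.

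To handle $\mathrm{ev}_1$, I would work in the exponential chart from the proof of \autoref{prop_path_sn}. Near any $\beta_0 \in \mathrm{Path}(S^3)$, nearby paths are parametrized by vector fields $V$ in the Fr\'echet space $T_{\beta_0}\mathrm{Path}(S^3) = \{V\colon[0,1]\to TS^3 : \pi\circ V=\beta_0\}$ via the pointwise exponential $\beta(t) = \mathrm{Exp}_{\beta_0(t)}(V(t))$. In these coordinates $\mathrm{ev}_1$ becomes the composition $V \mapsto V(1) \mapsto \mathrm{Exp}_{\beta_0(1)}(V(1))$. The point-evaluation map $V \mapsto V(1)$ from $T_{\beta_0}\mathrm{Path}(S^3)$ to the finite-dimensional vector space $T_{\beta_0(1)}S^3$ is continuous and linear with respect to any of the $C^k$ seminorms, so it is smooth in the Fr\'echet sense (its directional derivative at every point is itself, and this is continuous in both variables). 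Composing with the smooth Riemannian exponential $\mathrm{Exp}_{\beta_0(1)}$ on $S^3$, we conclude that $\mathrm{ev}_1$ is smooth in a neighborhood of every $\beta_0$.

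The main (and really only) obstacle is justifying smoothness of point-evaluation in the convenient-calculus sense used throughout the paper, rather than just its continuity; once that linear-plus-continuous argument is in place, the rest is a routine composition of smooth maps. Putting the two pieces together yields
\begin{equation*}
\mathrm{Eval}(\gamma,x) = \mathrm{ev}_1\bigl(\mathrm{Lift}(\gamma,x)\bigr),
\end{equation*}
a composition of smooth maps of Fr\'echet manifolds, which proves the proposition.
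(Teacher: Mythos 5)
Your proposal is correct and follows essentially the same route as the paper: both factor $\mathrm{Eval}$ as the endpoint-evaluation map on $\mathrm{Path}(S^3)$ composed with $\mathrm{Lift}$, invoke \autoref{prop_lift_smooth}, and justify smoothness of the evaluation map by noting that its derivative is again evaluation $V \mapsto V(1)$, a continuous linear map of Fr\'echet spaces. Your extra step of passing through the exponential chart of \autoref{prop_path_sn} just makes explicit what the paper leaves implicit.
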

\begin{proof} Note that $\mathrm{Eval}(\gamma, x) = \mathrm{E_1} \circ \mathrm{Lift}(\gamma, x)$, where
\begin{gather*}
\begin{split}
\mathrm{E_1} \colon \mathrm{Path}(S^3) &\rightarrow S^3 \\
c &\mapsto c(1)
\end{split}
\end{gather*}
The map $\mathrm{E_1}$ is smooth: its first derivative at any $\alpha$ is
\begin{gather*}
\begin{split}
\mathrm{dE_1}(\alpha) \colon T_{\alpha}\mathrm{Path}(S^3) &\rightarrow T_{\alpha(1)}S^3 \\
V &\mapsto V(1)
\end{split}
\end{gather*}
which is a bounded map between Fr\'echet spaces. The same remark applies for higher derivatives. Since $\mathrm{Eval}$ is a composition of smooth maps, it is also smooth by the Chain rule. \end{proof}

We now turn to our main goal in this appendix, which is to prove explicitly that the map $F \colon S^3 \rightarrow S^3$ defined in \autoref{eq_definition_F} is smooth. Recall that to define this map, we first fix a point $y_0 \in S^2$ and a rigid motion
$$F_0 \colon p^{-1}(y_0) \rightarrow p^{-1}(f(y_0))$$
between the Hopf fibers $p^{-1}(y_0)$ and $p^{-1}(f(y_0))$, where $f \colon S^2 \rightarrow S^2$ is a given area-preserving diffeomorphism. Then, $F$ is given by the composition
\begin{equation}\label{eq_F_appendix}
F(x) = H_{f\gamma} \circ F_0 \circ H_{\gamma}^{-1}(x)
\end{equation}
where $\gamma$ is any path in $S^2$ between $y_0$ and $y = p(x)$ and the maps $H$ are the horizontal transport maps defined in 
\autoref{eq_horizontal_transport}.

\begin{prop}\label{prop_smoothness_F}
	The map $F$ is smooth as a function of the point $x \in S^3$, the
	path $\gamma \in \mathrm{Path}(S^2)$ and the diffeomorphism $f \in \SDiff(S^2)$.
\end{prop}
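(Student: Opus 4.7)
The plan is to display $F$ as a composition of maps that are either already known to be smooth between Fréchet manifolds (by Propositions~\ref{prop_path_sn} through \ref{prop_eval_smooth}) or are smooth operations on paths. Specifically, I would rewrite the two horizontal transport maps in \eqref{eq_F_appendix} in terms of the smooth map $\mathrm{Eval}\colon \mathrm{Path}^\dast \to S^3$ and standard manipulations on $\mathrm{Path}(S^2)$, then invoke the chain rule.

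The key observation is that $H_\gamma^{-1}(x)$ can be recovered as the endpoint of the horizontal lift of the reversed path $\gamma^{-1}(t) := \gamma(1-t)$ starting at $x$, so
\[
H_\gamma^{-1}(x) \;=\; \mathrm{Eval}(\gamma^{-1},\, x),
\]
and likewise $H_{f\gamma}(z) = \mathrm{Eval}(f\gamma,\, z)$ whenever $z \in p^{-1}(f(y_0))$. Combining these gives
\[
F(x) \;=\; \mathrm{Eval}\Bigl(f\circ\gamma,\; F_0\bigl(\mathrm{Eval}(\gamma^{-1},\, x)\bigr)\Bigr).
\]
Thus $F$ is built from four ingredients: the path-reversal map $R\colon \mathrm{Path}(S^2) \to \mathrm{Path}(S^2)$, $R(\gamma) = \gamma(1-\cdot)$; the left composition map $(f,\gamma)\mapsto f\circ\gamma$ from $\SDiff(S^2)\times \mathrm{Path}(S^2)$ to $\mathrm{Path}(S^2)$; the evaluation map $\mathrm{Eval}$; and the rigid motion $F_0\colon p^{-1}(y_0)\to p^{-1}(f(y_0))$ (taken to be the nearest-neighbor isometry used throughout \autoref{bundlesec}), which depends smoothly on $f$ through the smooth dependence of $f(y_0)$ on $f$.

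I would verify smoothness of each ingredient in turn. Path reversal is just precomposition with a fixed diffeomorphism of $[0,1]$, hence is smooth between Fréchet manifolds by a direct chart computation using the Riemannian exponential parametrization of $\mathrm{Path}(S^2)$ from \autoref{prop_path_sn}. The composition map $(f,\gamma)\mapsto f\circ\gamma$ was already asserted to be smooth in the proof of \autoref{fiberbundle}, and can be proven using the standard chart structure on $\Diff(S^2)$ (the $\mathrm{Exp}_f$ charts of \eqref{exp_chart}) together with the $\mathrm{Exp}_\gamma$ charts on $\mathrm{Path}(S^2)$. The evaluation map $\mathrm{Eval}$ is smooth by \autoref{prop_eval_smooth}. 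Finally, $F_0$ is a map of circles whose parameters (center and isometric identification) depend smoothly on $f(y_0)\in S^2$, which is the smooth composition $f\mapsto f(y_0)$. Checking domain compatibility—that $(\gamma^{-1},x)$ and $(f\gamma,\,F_0(\mathrm{Eval}(\gamma^{-1},x)))$ both lie in $\mathrm{Path}^\dast$—amounts to the identities $\gamma^{-1}(0)=\gamma(1)=p(x)$ and $(f\gamma)(0)=f(y_0)=p(F_0(\mathrm{Eval}(\gamma^{-1},x)))$, both of which are immediate.

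The main obstacle I expect is the bookkeeping rather than any deep issue: one must confirm that in each chart the intermediate quantities depend smoothly on all their arguments jointly, not merely separately, and that the pullback/pushforward constructions used to define smoothness on $\mathrm{Path}^\dast$ (via the decomposition in \autoref{prop_path_star}) are compatible with the Fréchet chart on $\Diff(S^2)$. Once the joint smoothness of the composition map $(f,\gamma)\mapsto f\circ\gamma$ is established in the sense of the Fréchet differential calculus recalled in this appendix, the chain rule assembles the four smooth factors into the claimed smoothness of $F$ as a function of $(x,\gamma,f)$.
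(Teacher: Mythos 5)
Your proposal is correct and follows essentially the same route as the paper's proof: both rewrite the horizontal transports as $H_\gamma^{-1}(x)=\mathrm{Eval}(\gamma^{-1},x)$ and $H_{f\gamma}=\mathrm{Eval}(f\circ\gamma,\cdot)$, so that $F(x)=\mathrm{Eval}\bigl(f\circ\gamma,\,F_0(\mathrm{Eval}(\gamma^{-1},x))\bigr)$, and then invoke \autoref{prop_path_sn}--\autoref{prop_eval_smooth}, the smoothness of $(f,\gamma)\mapsto f\circ\gamma$, the smooth dependence of $F_0$ on $f$, and the chain rule. The only difference is organizational: the paper takes $\gamma$ to be the shortest geodesic from $p(x)$ to $y_0$ near the base fiber and handles distant $x$ by changing the basepoint (which alters $F$ only by a uniform rotation), whereas you keep $\gamma$ as a free variable and verify joint smoothness in $(x,\gamma,f)$ directly; both versions are sound.
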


\begin{proof}
	It suffices to check that each of the factors in \autoref{eq_F_appendix} is smooth. To do that, we first focus on the points $x \in S^3$ with $y = p(x)$ close to the base point $y_0$. Given such an $x$, choose $\gamma_y$ to be the unique shortest geodesic between $y$ and $y_0$. Then $\gamma_y$ depends smoothly on $y$ and
	\begin{equation}
	H_{\gamma_y}^{-1}(x) = \mathrm{Eval}(\gamma^{-1}_y,~ x)
	\end{equation}
	in turn depends smoothly on $x$, by  \autoref{prop_lift_smooth} and \autoref{prop_eval_smooth}. Similarly,
	\begin{equation}
	H_{fy} = \mathrm{Eval}(f \circ \gamma,~ F_0(x_0))
	\end{equation}
	and since $F_0$ is a fixed rigid motion, it follows that $F$ is smooth, at least on a neighborhood of the fiber $p^{-1}(y_0)$. To treat the case where $x$ is far away from this fiber, it suffices to note that we can choose a different base point $y_0$ whose fiber $p^{-1}(y_0)$ is close to $x$, since this new choice of base point will yield the same map $F$ up to a uniform rotation of all fibers. Thus, $F$ is everywhere smooth.
\end{proof}

\newpage

\bibliographystyle{amsalpha}
\bibliography{../biblioSF}

\vspace{40pt}

\begingroup%
\setlength{\parskip}{\storeparskip}

\end{document}